\documentclass[oneside,english,11pt]{amsart}
\usepackage[T1]{fontenc}
\usepackage{textcomp}
\usepackage{url}
\usepackage[utf8]{inputenc}
\usepackage{amstext}
\usepackage{amsthm}
\usepackage{amssymb}
\usepackage{xcolor}
\usepackage{enumitem}
\usepackage{comment}
\usepackage[a4paper]{geometry}
\usepackage{orcidlink}
\makeatletter
\numberwithin{equation}{section}
\numberwithin{figure}{section}
\theoremstyle{plain}
\newtheorem{thm}{\protect\theoremname}
\newtheorem{rem}[thm]{Remark}
\newtheorem{prop}[thm]{\protect\propositionname}
\newtheorem{lem}[thm]{\protect\lemmaname}
\newtheorem{cor}[thm]{\protect\corollaryname}
\numberwithin{thm}{section}
\makeatother

\usepackage{babel}
\providecommand{\lemmaname}{Lemma}
\providecommand{\propositionname}{Proposition}
\providecommand{\theoremname}{Theorem}
\providecommand{\corollaryname}{Corollary}
\allowdisplaybreaks

\begin{document}
\title[Discrepancy estimates for non-smooth convex bodies]{Discrepancy estimates for multi-dimensional non-smooth convex bodies: a case study}
\author[]{Roberto Bramati\orcidlink{0000-0002-5672-2139}}
\author[]{Luca Brandolini\orcidlink{0000-0002-9670-9051}}
\author[]{Alessandro Monguzzi\orcidlink{0000-0003-3233-5000}}
\address{Dipartimento di Ingegneria Gestionale, dell'Informazione e
della Produzione, Universit{\`a} degli Studi di Bergamo, Viale G. Marconi 5, 24044,
Dalmine BG, Italy}
\email{roberto.bramati@unibg.it}
\email{luca.brandolini@unibg.it}
\email{alessandro.monguzzi@unibg.it}
\thanks{All the authors are member of Indam-GNAMPA}
\keywords{discrepancy theory, Fourier analysis, non-smooth convex bodies, irregularities of distribution}

\subjclass[2020]{11K38, 42B10, 42B20}

\begin{abstract}
We study $L^2$-averaged discrepancies of finite sequences of points in the torus $\mathbb{T}^d$ with respect to translated and dilated copies of convex bodies with non-smooth boundary. Under suitable anisotropic assumptions on the decay of the Fourier transform of the body, we prove matching lower and upper bounds for the averaged discrepancy, obtaining the rate $
N^{1 - \frac{d+1}{d^2+d-1}}$.
This yields an intermediate regime between smooth convex bodies and polytopes and recovers the known exponent $2/5$ in dimension $d=2$. The argument relies on harmonic analysis techniques combined with averaging procedures adapted to the anisotropic setting. As an application, we analyze a class of convex bodies exhibiting mixed geometric features, including flat regions, curved parts, and edges.
\end{abstract}
\maketitle

\section{Introduction}
Let $\Omega\subseteq \mathbb{T}^d$ be a measurable set and let
$\mathcal P_N=(z_1,\dots,z_N)$ be a finite sequence of points in $\mathbb{T}^d$.
A basic question in geometric discrepancy theory is to quantify how well
$\mathcal P_N$ approximates the Lebesgue measure of translated and dilated
copies of $\Omega$. 
A natural way to measure this is through
$L^2$-averages in the translation and dilation parameters of the
discrepancy function
\[
D_N(x,t)
:=\sum_{j=1}^N \chi_{-x+t\Omega}(z_j)-N\,t^d|\Omega|,
\qquad (x,t)\in\mathbb{T}^d\times(0,1].
\]
In geometric discrepancy theory, two complementary goals emerge. On the one
hand, one seeks to construct points  $\mathcal P_N$ for which the
discrepancy $D_N(x,t)$ is small in a suitable sense. On the other hand, one
aims to understand the obstructions that prevent this quantity from being too
small, thereby establishing lower bounds that hold for all configurations of
points.
See, for instance, the monograph \cite{BeckChen1987}.

A powerful approach to the study of these $L^2$-averages is provided by Fourier analysis. In many classical situations, their size is closely related to decay properties of the Fourier transform $\widehat{\chi_\Omega}$, which reflect geometric features of the set $\Omega$.

Our starting point is the following result of Brandolini and Travaglini
\cite{BT2022} in dimension $d=2$.
\begin{thm}
\label{thm:BT}
Let $\Omega$ be a convex planar body which is not a polygon and has piecewise
$\mathcal{C}^2$ boundary. Then there exists $c>0$  such that for every finite sequence $\mathcal{P}_N$ of $N$ points  in $ \mathbb{T}^2$ we
have
\[
\int_0^1\int_{\mathbb{T}^2} \left|D_N(x,t) \right|^2 dx dt
\geqslant c N^{2/5}.
\]
\end{thm}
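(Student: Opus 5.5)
We follow the classical Fourier-analytic route (Cassels–Montgomery–Beck type). Expand $D_N(\cdot,t)$ in Fourier series on $\mathbb{T}^2$. Since $\chi_{-x+t\Omega}(z)=\chi_{t\Omega}(z+x)$, Parseval gives
\[
\int_{\mathbb{T}^2}|D_N(x,t)|^2dx=\sum_{0\neq m\in\mathbb{Z}^2} t^4\,|\widehat{\chi_\Omega}(tm)|^2\,\Big|\sum_{j=1}^N e^{2\pi i m\cdot z_j}\Big|^2 .
\]
Integrating in $t\in(0,1]$ and dropping the terms with $|m|>R$, with $R$ to be chosen, we get
\[
\int_0^1\!\!\int_{\mathbb{T}^2}|D_N|^2\,dx\,dt\ \geqslant\ \sum_{0<|m|\leqslant R}\Phi(m)\,\Big|\sum_{j}e^{2\pi i m\cdot z_j}\Big|^2,\qquad \Phi(m)=\int_0^1 t^4|\widehat{\chi_\Omega}(tm)|^2dt .
\]
It then suffices to find many frequencies $m$ with $\Phi(m)$ not too small, and to show that the exponential sums cannot all be small on them.

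\textbf{Geometric input.} Since $\Omega$ is convex with piecewise $\mathcal C^2$ boundary and is not a polygon, its boundary contains an arc $\Gamma$ of positive length on which the curvature is not identically zero. Shrinking $\Gamma$, we may assume the curvature is bounded below by $\kappa_0>0$ on $\Gamma$. Let $A$ be the open cone of directions $\Theta$ normal to $\Gamma$; $A$ has nonempty interior. For $\Theta$ in a compact subcone of $A$ we want the asymptotic
\[
|\widehat{\chi_\Omega}(\rho\Theta)|^2\text{ averaged over }\rho\in[r,2r]\ \gtrsim r^{-3}.
\]
The curved arc contributes a stationary-phase term of exact size $\rho^{-3/2}$. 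The corners and the other (flat or curved) pieces contribute oscillating terms of size $\rho^{-3/2}$ or $\rho^{-2}$ with different phases. Averaging in $\rho$ decouples the phases, so the non-degenerate term survives. Integrating in $t$ then gives $\Phi(m)\gtrsim |m|^{-3}$ for $m$ in the truncated cone $\{m\in\mathbb Z^2:\, m/|m|\in A',\ |m|\gtrsim 1\}$. This averaging lower bound, which must be stable under piecewise smoothness and the presence of corners, is the main obstacle. We would first establish it via the divergence theorem, writing $\widehat{\chi_\Omega}(\xi)=\frac{i}{2\pi|\xi|^2}\int_{\partial\Omega}e^{-2\pi i\xi\cdot x}\,\xi\cdot n(x)\,d\sigma(x)$, and then apply van der Corput / stationary phase on each piece.

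\textbf{Counting argument.} Since the set of $m$ with $\Phi(m)\gtrsim|m|^{-3}$ lies only in a cone, we cannot use the full lattice directly. Instead we use a Fejér/Cassels-type positive-definite kernel adapted to the cone. Choose a nonnegative function $\psi$ supported in a small box, so that $\widehat\psi\geqslant0$ and the dilate $\widehat{\psi}(\cdot/R)$ is concentrated in a slab. Then
\[
\sum_{m}\widehat\psi(m/R)\Big|\sum_j e^{2\pi i m\cdot z_j}\Big|^2=R^2\sum_{j,k}\psi_{\rm per}(R(z_j-z_k))\ \geqslant\ R^2N\psi(0).
\]
Multiplying by a character $e^{2\pi i m\cdot v}$ with $v$ in the cone, as in Montgomery's trick, and using the symmetry $m\to -m$, we restrict the main mass to frequencies $m$ with $|m|\sim R$ and direction in $A'\cup(-A')$. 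We subtract the $m=0$ term, which equals $N^2\widehat\psi(0)$; it is killed by choosing $\psi$ with $\widehat\psi(0)=0$, i.e. a difference of two bumps. This yields
\[
\sum_{|m|\sim R,\ m/|m|\in \pm A'}\Big|\sum_je^{2\pi i m\cdot z_j}\Big|^2\gtrsim NR^2 ,
\]
valid once $R^2\gtrsim N$, so that $\psi_{\rm per}(R(z_j-z_k))$ with $j\ne k$ can be controlled on average. We use only $R\simeq c\,N^{1/2}$ if necessary, but anisotropic scaling of the box—long in the direction of the cone and thin across it—gives room to optimize.

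\textbf{Optimization.} Combining the two steps gives a lower bound of the form $NR^2\cdot R^{-3}$ up to error terms. A naive choice $R\sim N^{1/2}$ would give $N^{1/2}$, which is wrong for general point sets since clustering defeats the $j\ne k$ control. The honest balance comes from the anisotropic box of sides $R_1\times R_2$, where the frequencies form a sector of angular width $R_2/R_1$ and the non-degeneracy holds only along directions $\Theta$ normal to curved parts. The bound becomes $N\cdot R_1R_2\cdot R_1^{-3}$, subject to the constraint $R_1R_2\lesssim N$ coming from the off-diagonal terms, together with a constraint $R_2\gtrsim R_1^{1/2}$ that lets the angular sector contain many lattice directions where the curvature term dominates the flat/corner terms. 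Optimizing $R_1R_2^{}\sim N$ with $R_2\sim R_1^{2/3}$, i.e. $R_1\sim N^{3/5}$, gives $N\cdot N\cdot N^{-9/5}=N^{2/5}$. We expect that verifying this exact balance, including controlling the flat and corner contributions within a sector of width $R_1^{-1/3}$, will require the most care.
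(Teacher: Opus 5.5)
This statement is quoted from \cite{BT2022} and is not proved here; the paper's own argument for this exponent is the proof of Theorem \ref{thm:main}, which at $d=2$ reproduces it under hypothesis \eqref{eq:ipotesi thm1}. Your Fourier input in the cone of normals to a curved arc (the first line of \eqref{eq:ipotesi thm1}) is the right one, though only sketched. The counting step, however, has a genuine gap.

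The kernel you describe cannot exist. You need $\psi\geqslant 0$ to discard the terms $\psi_{\mathrm{per}}(R(z_j-z_k))$, $j\neq k$, and $\widehat\psi\geqslant 0$ to compare with a sum over a frequency region, and then $\widehat\psi(0)=\int\psi>0$. Multiplying the weights by a character only makes them complex-valued; it does not localize the frequencies to a cone.

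More seriously, the inequality you aim for, $\sum_{|m|\sim R,\ m/|m|\in\pm A'}\bigl|\sum_j e^{2\pi i m\cdot z_j}\bigr|^2\gtrsim NR^2$ for $R^2\gtrsim N$, is false. Take a primitive $w\in\mathbb Z^2$ with direction in (a narrow) $A'$ and $z_\ell=\ell w/N$, $\ell=0,\dots,N-1$. The exponential sum vanishes unless $N\mid m\cdot w$, hence for every $m$ in the cone with $0<|m|<N/|w|$.

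Proposition \ref{cassels} is only available on convex symmetric bodies, so every usable frequency region contains low frequencies in all directions. For those you only have the weaker averaged bound $\gtrsim A^{-4}$: the second line of \eqref{eq:ipotesi thm1}, which for planar bodies is supplied by the chord estimates of \cite{BT2022}. Your proposal neither states nor uses this second input. This, not clustering, is what kills the square of side $N^{1/2}$: there the weight drops to $|m|^{-4}\approx N^{-2}$ and one only gets $O(1)$.

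The optimization does not close either. With $R_1R_2\sim N$ and $R_1\sim N^{3/5}$, your expression $N\cdot R_1R_2\cdot R_1^{-3}$ equals $N^{2-9/5}=N^{1/5}$, not $N^{2/5}$. The relation $R_2\sim R_1^{2/3}$ is not derived from anything. Your stated constraint $R_2\gtrsim R_1^{1/2}$ would even allow $R_1=R_2=N^{1/2}$ and the bound $N^{1/2}$, contradicting the sharpness of $2/5$ shown in \cite{BT2022}. Also, the area condition in Proposition \ref{cassels} is $R_1R_2\gtrsim N$ (to beat the $cN^2$ loss near the origin), not $\lesssim N$.

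The missing idea is the fan of boxes in the proof of Theorem \ref{thm:main}:
\begin{enumerate}
\item Take $\approx X/Y$ rectangles $F_\omega$ of sides $X\times Y$, centred at $0$, with $XY=\gamma N$ and $\gamma$ large, so that Proposition \ref{cassels} gives $\gtrsim N^2$ on each.
\item Point their short sides in $\approx Y/X$-separated directions $\omega$ of a small cap around the vector $v$ of \eqref{eq:ipotesi thm1}, so that their long sides lie in the good cone.
\item The multiplicity $\#\{\omega:m\in F_\omega\}$ is $\lesssim X/|m|$ for $|m|\gtrsim Y$ (such $m$ automatically lie in the good cone) and $\lesssim X/Y$ for $|m|\lesssim Y$.
\item Hence a common weight $\Gamma$ times the multiplicity stays below $|m|^{-3}$, respectively $|m|^{-4}$, as soon as $\Gamma\lesssim\min(X^{-3},X^{-1}Y^{-3})$.
\end{enumerate}
Balancing gives $X^2=Y^3$, i.e. $X=N^{3/5}$ and $Y=N^{2/5}$, and the lower bound $\Gamma\cdot\frac XY\cdot N^2\approx N^{-9/5}N^{1/5}N^2=N^{2/5}$. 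The factor $X/Y=N^{1/5}$ counting the boxes is exactly what your single-box count lacks. The relation $Y=X^{2/3}$ comes from the near-origin balance against the $|m|^{-4}$ bound.
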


The exponent in Theorem~\ref{thm:BT} is sharp: in \cite{BT2022} the authors
also construct a convex planar body and a point set whose averaged discrepancy is of order
$N^{2/5}$. Both the lower bound and the matching example rely on precise
estimates for $\widehat{\chi_\Omega}$, and in particular on the fact that the
Fourier transform exhibits {different decay rates in different
directions}. Exploiting related ideas, Bilyk and Mastrianni \cite{BM2023}
showed that the discrepancy of a square in $\mathbb{T}^2$, averaged over a
nontrivial interval of rotations, displays the same $N^{2/5}$ behavior.
More recently, Beretti \cite{Beretti2024} studied the affine quadratic
discrepancy of general planar convex bodies and obtained asymptotic results for the
corresponding quantities. He observed that discrepancy estimates of order $N^{2/5}$ are stable under averaging over the angular variable when the interval of rotations is sufficiently small.

To the best of the authors' knowledge, the exponent $N^{2/5}$ is the first
instance in the discrepancy literature of an averaged lower bound with this
growth rate. It should be compared with the exponent $N^{1/2}$, that is
typical for planar convex bodies with $\mathcal{C}^2$ boundary, see \cite[Theorem 4]{BT2022}, and with the logarithmic behavior
$\log N$, that is characteristic of convex polygons, see \cite{Drmota}.

In this paper we provide a $d$-dimensional analogue of
Theorem~\ref{thm:BT} where a directionally non-uniform decay for $\widehat{\chi_\Omega}$ (in an
appropriate averaged sense) leads to a discrepancy which is intermediate compared with the $N^{1-1/d}$ growth typical of smooth convex bodies, see e.g.  \cite{BCT2026}, and the $(\log N) ^{d-1}$ growth associated to convex polyhedra, see again \cite{Drmota}. 
In this context, see also the papers by Beck and Chen \cite{BeckChen1986,BeckChen1990} and by Matou\v{s}ek \cite{Mat} where the case of Cartesian products is considered. In particular the results of Beck and Chen show that the $L^2$-averaged discrepancy of the Cartesian product of a $(d-1)$-dimensional ball and an interval (a cylinder) has growth of order $N^{1-1/(d-1)}$. 

The results in \cite{BT2022} are inherently two-dimensional and exploit the fact that, in this setting, for a direction $\Theta$, one can control the Fourier transform $\widehat{\chi_\Omega}(\rho\Theta)$ in terms of chords of $\Omega$ in the direction orthogonal to $\Theta$, with the relevant contribution coming from scales of order $\rho^{-1}$ near the boundary. 

This method does not extend to higher dimensions, where no analogous control in terms of chords is available in such  generality. The main result of this type available in higher dimensions is due to Bruna, Nagel, and Wainger \cite{BNW1988}. It provides decay estimates for $\widehat{\chi_\Omega}(\rho\Theta)$ in terms of the measure of the sections of $\Omega$ by hyperplanes orthogonal to $\Theta$. However, in contrast with the planar case, this result requires a smooth boundary with finite order of contact with tangent lines. We emphasize that the estimates in \cite{BNW1988} are {upper bounds} and are not
asymptotic in nature; in particular, they do not yield matching lower
bounds for $\widehat{\chi_\Omega}$. Establishing nontrivial and uniform
lower bounds requires additional arguments and is a more delicate
problem, see for instance \cite{BCT2026}.

Motivated by the above considerations, we establish general results which provide matching lower and upper bounds for the discrepancy under suitable assumptions on the directional decay of $\widehat{\chi_\Omega}$. More precisely, we consider a situation in which the Fourier transform exhibits different decay rates in a distinguished set of directions and in its complement, and we show that this anisotropic behavior leads to matching bounds for the discrepancy.

We first fix some notation. Let $\Omega \subset \mathbb{R}^d$ be a bounded measurable set. For each $\xi \in \mathbb{R}^d$, we define the Fourier transform of the indicator function of $\Omega$ by
\[
\widehat{\chi_\Omega}(\xi)
:= \int_{\Omega} e^{-2\pi i\, \xi \cdot x}\, dx .
\]
Throughout, for $\xi \in \mathbb{R}^d$ we will write 
\[
\xi = (\xi_1,\dots,\xi_d) = (\xi',\xi_d),
\]
where $\xi' \in \mathbb{R}^{d-1}$. The same notation will also be used for $\Theta\in\Sigma_{d-1}$, the unit sphere of $\mathbb{R}^{d}$. We will also identify the unit cube $\left[-\frac{1}{2},\frac{1}{2}\right)^{d}$ with the $d$-dimensional torus $\mathbb{T}^d$. Finally, we denote by $|F|$ the Lebesgue measure of a measurable set $F\subseteq\mathbb R^d$, and by $d\sigma$ the normalized Haar measure on $SO(d)$.

Our first result is the following.

\begin{thm}\label{thm:main}
Let $\Omega\subseteq\left[-\frac{1}{2},\frac{1}{2}\right)^{d}$.
Assume there exist positive constants $\kappa_1$, $\beta_1$, and $v\in\Sigma_{d-1}$ such that for $A$ sufficiently large and for every  $\Theta\in\Sigma_{d-1}$, we have
\begin{equation}
\frac{1}{A}\int_A^{2A}|\widehat{\chi_{\Omega}}(\rho\Theta)|^2\, d\rho\geqslant\begin{cases}
\frac{\kappa_1}{A^{d+1}} \qquad |\Theta\cdot v|\leqslant \beta_1 \\
 \frac{\kappa_1}{A^{d+2}}\qquad \beta_1<|\Theta\cdot v|.
\end{cases}
\label{eq:ipotesi thm1}
\end{equation}
Then there exists a positive constant $c>0$ such that for every integer $N$ sufficiently large and for every finite
sequence of points $z_{1},z_{2},\ldots,z_{N}\in\mathbb{T}^{d}$ we have
\[
\int_{0}^{1}\int_{\mathbb{T}^{d}}\left|D_N(x,t)\right|^{2}dxdt\geqslant c N^{1-\frac{d+1}{d^2+d-1}}.
\]
\end{thm}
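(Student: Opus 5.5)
The plan is to reduce the statement to a lower bound for a weighted sum of exponential sums via Parseval, and then apply a Cassels--Montgomery type inequality on a box adapted to the distinguished direction $v$. Write $S(m)=\sum_{j=1}^N e^{2\pi i m\cdot z_j}$. For fixed $t$, the Fourier coefficients of $x\mapsto D_N(x,t)$ on $\mathbb{T}^d$ are $t^d\,\widehat{\chi_\Omega}(tm)\,\overline{S(m)}$ up to unimodular factors for $m\neq0$, and vanish for $m=0$. Since $t\Omega\subseteq[-\frac12,\frac12)^d$ for $t\leqslant1$, Parseval gives
\[
\int_0^1\int_{\mathbb{T}^d}|D_N(x,t)|^2\,dx\,dt\;\geqslant\;\sum_{m\neq0}|S(m)|^2\int_{1/2}^1 t^{2d}|\widehat{\chi_\Omega}(tm)|^2\,dt .
\]
Substituting $\rho=t|m|$ and $A=|m|/2$ in \eqref{eq:ipotesi thm1} (valid for $|m|$ large; finitely many small $m$ are simply discarded, using positivity) bounds the inner integral below by $c\,w(m)$. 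Here $w(m)=|m|^{-d-1}$ when $|m\cdot v|\leqslant\beta_1|m|$, and $w(m)=|m|^{-d-2}$ otherwise. So it suffices to show that $\sum_{m\neq0} w(m)|S(m)|^2\gtrsim N^{1-\frac{d+1}{d^2+d-1}}$.

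Next I would prove an anisotropic Cassels--Montgomery inequality. Let $u_1,\dots,u_{d-1},v$ be an orthonormal frame, and let $R=R_{X,Y}$ be the set of $m\in\mathbb{Z}^d$ with $|m\cdot u_i|\leqslant X$ and $|m\cdot v|\leqslant Y$. Using the nonnegative product Fejér kernel $K$ adapted to $R$, whose Fourier coefficients are nonnegative, supported in $R$, and bounded by $1$, positivity gives
\[
\sum_{m\in R}|S(m)|^2\geqslant\sum_{m}\widehat K(m)|S(m)|^2=\sum_{j,k}K(z_j-z_k)\geqslant N K(0)\gtrsim N\,X^{d-1}Y .
\]
Subtracting the $m=0$ term $N^2$ yields $\sum_{m\in R\setminus\{0\}}|S(m)|^2\gtrsim N X^{d-1}Y$ whenever $X^{d-1}Y\geqslant C N$. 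Since $v$ may not be rational, the kernel should be built on a slightly enlarged rotated box; only the volume matters, so this is harmless.

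Then I would split $R\setminus\{0\}$ into the slab part ($|m\cdot v|\leqslant\beta_1|m|$), where $w\gtrsim X^{-d-1}$, and the conical part near $\pm v$. When $Y\ll X$, every conical frequency has $|m|\lesssim Y/\beta_1$ and hence $w\gtrsim Y^{-d-2}$. This gives the lower bound $\min(X^{-d-1},Y^{-d-2})\cdot N X^{d-1}Y$, to be optimized under $X^{d-1}Y\approx N$.

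I expect this balancing to be the main obstacle. A crude single box with $X^{d+1}=Y^{d+2}$ does not seem to reach the exponent $1-\frac{d+1}{d^2+d-1}$; for $d=2$ it gives a worse rate than $N^{2/5}$. What I would try first is a dyadic decomposition in the $v$-direction: apply the Cassels--Montgomery bound to a family of boxes $R_{X,Y_k}$ (or use a kernel combining them), so that frequencies with $|m\cdot v|\sim 2^k$ are weighted at their true size $|m|^{-d-2}\approx 2^{-k(d+2)}$ rather than the worst case $Y^{-d-2}$. Alternatively, I would choose a kernel whose Fourier transform is itself comparable to $w$ on a suitable region, so that the weight enters exactly rather than through its minimum. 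The parameters would then be optimized, checking that the exponent $1-\frac{d+1}{d^2+d-1}$, which is $\frac25$ for $d=2$, emerges.
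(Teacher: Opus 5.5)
Your reduction matches the paper's. Parseval in $x$, the restriction to $t\in[\frac12,1]$ and the hypothesis with $A=|m|/2$ reduce everything to a lower bound for $\sum_{m\neq0}w(m)|S(m)|^2$. You are also right that Cassels--Montgomery on one box aligned with $v$, of volume $\approx N$, only gives $N^2\min(X^{-d-1},Y^{-d-2})\approx N^{\frac{d^2+d-4}{d^2+2d-1}}$, which is $N^{2/7}$ for $d=2$.

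The gap is that neither remedy you propose repairs this. Any nonnegative combination $\sum_k a_k\widehat{K_k}$ of Fej\'er kernels on boxes aligned with $v$, dyadic in $Y$ or not, is capped at the same exponent. Let $(X_j^*,Y_j^*)$ be the balanced pair of volume $2^jN$. A box of volume $\approx 2^jN$ must have $X_k\gtrsim X_j^*$ or $Y_k\gtrsim Y_j^*$. Boxes of the first kind all contain a common frequency at distance $\approx X_j^*$ in the slab region, and boxes of the second kind a common frequency at distance $\approx Y_j^*$ near the $v$-axis. The constraint $\sum_k a_k\widehat{K_k}\leqslant w$ at these two points caps $\sum_k a_k$ by the single-box weight, and the resulting sum over $j$ decreases geometrically.

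Your other suggestion, a kernel whose Fourier transform is comparable to $w$, fails on positivity. If, say, $\min(w,\lambda)$ had a nonnegative inverse Fourier transform, then $\lambda\approx N^{-(d+1)/d}$ would give the bound $N^{1-1/d}$ for every point set. That contradicts the rotated-lattice computation in the proof of Theorem~\ref{thm:main2}. Constructing an admissible positive-definite minorant of $w$ is exactly the content of the proof, and your proposal does not supply one.

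The missing idea is to \emph{tilt} the thin direction rather than vary its thickness. The paper proceeds as follows.
\begin{itemize}
\item It takes cylinders $F_\omega$ of radius $X$ and thickness $Y$ with axis $\omega$. Here $\omega$ runs over a maximal $\delta$-separated set $T_\delta$, with $\delta=Y/(2X)$, in a cap of aperture $\beta_0<\beta_1$ around $v$.
\item It applies Cassels--Montgomery to each $F_\omega$ and sums over $\omega$. What must then lie below $w/\Gamma$ is the multiplicity $\Phi(m)=\#\{\omega\in T_\delta: m\in F_\omega\}$.
\item For $|m|\lesssim Y$ it uses $\Phi\leqslant\#T_\delta\approx (X/Y)^{d-1}$ against $w\gtrsim Y^{-d-2}$.
\item For $|m|\gtrsim Y$, every such $m$ satisfies $|m\cdot v|\leqslant Y+\beta_0|m|\leqslant\beta_1|m|$, so the larger weight $|m|^{-d-1}$ is available. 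Moreover $\Phi(m)\lesssim (Y/|m|)\,\delta^{1-d}$, because the admissible $\omega$ lie in a spherical band of width $Y/|m|$.
\item This gives $\Gamma\approx\min\bigl(X^{1-d}Y^{-3},\,Y^{d-2}X^{1-2d}\bigr)$. Choosing $X^d=Y^{d+1}$ and $X^{d-1}Y=\gamma N$ yields $\Gamma\,(\#T_\delta)\,N^2\approx N^{1-\frac{d+1}{d^2+d-1}}$.
\end{itemize}
The tilted family detects frequencies at distance $\approx X$ from the $v$-axis but at heights $|m\cdot v|\gg Y$, which still lie in the slab region. A large part of the weighted mass of the extremal rotated lattice sits exactly there, and no thin box aligned with $v$ reaches it.
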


\begin{rem} 
Since the Fourier transform of the characteristic function of a measurable set typically exhibits an oscillatory behavior and may vanish at certain points, the averaging condition in assumption \eqref{eq:ipotesi thm1} is essential to obtain meaningful lower bounds. Consequently, an average over the dilation parameter is also required in the discrepancy estimate.
\end{rem}
 Observe that the exponent $1-\frac{d+1}{d^2+d-1}$ reduces to $\frac{2}{5}$ when $d=2$, thus matching Theorem~\ref{thm:BT}.
Moreover, since
\[
1-\frac{d+1}{d^2+d-1} < 1-\frac{1}{d},
\]
our bound is intermediate compared with the above mentioned $N^{1-1/d}$ growth  of smooth convex bodies, and the $(\log N) ^{d-1}$ growth of convex polyhedra. It is also larger than the exponent obtained in
\cite{BeckChen1986,BeckChen1990} for cylinders, suggesting that the
higher-dimensional setting allows for a range of interesting phenomena
that remain only partially understood. 

The next result shows that, under the assumption \eqref{eq:ipotesi thm1}, the above estimate for the discrepancy is optimal. Since the averaging over dilations does not play a role in this result, we set $D_N(x)=D_N(x,1)$ for convenience.

\begin{thm}\label{thm:main2}
Let $\Omega\subseteq\left[-\frac{1}{2},\frac{1}{2}\right)^{d}$. Assume there exist positive constants $\kappa_2$ and $\beta_2$ and $v\in\Sigma_{d-1}$ such that for $\rho$ sufficiently large and for every $\Theta\in\Sigma_{d-1}$ we have
\begin{align}
\left|\widehat{\chi_\Omega}(\rho\Theta)\right|^2 \leqslant\begin{cases}
        \frac{\kappa_2}{\rho^{d+1}}& |\Theta\cdot v|\leqslant\beta_2, \\
        \frac{\kappa_2}{\rho^{d+2}}& \beta_2<|\Theta\cdot v|, 
    \label{eq:ipotesi thm1 da sopra}
    \end{cases}
\end{align} 
then there exist a constant $c>0$ and a diverging sequence of integers $N_j$ such that for every $j$ there exists a finite sequence of points $z_1,\ldots,z_{N_j}\in\mathbb{T}^d$ for which
\begin{align*}
\int_{\mathbb{T}^{d}}\left|D_{N_j}(x)\right|^{2}dx\leqslant c N_j^{1-\frac{d+1}{d^2+d-1}}.  
\end{align*}
\end{thm}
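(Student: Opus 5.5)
We will prove Theorem~\ref{thm:main2} by constructing, for each $j$, an \emph{anisotropic lattice} point set, rotated so that its long axis is aligned with $v$. By rotating coordinates we may assume $v=e_d$. Strictly speaking, a rotated lattice is not a subset of the torus. One can either use the fact that the hypothesis \eqref{eq:ipotesi thm1 da sopra} is rotation invariant up to changing $v$, or simply work with $v=e_d$ after a preliminary reduction. We will assume $v=e_d$ for the exposition.

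For integers $m,n$ take the points
\[
z_{k}=\Big(\frac{k_1}{m},\dots,\frac{k_{d-1}}{m},\frac{k_d}{n}\Big),\qquad 0\le k_i<m\ (i<d),\ 0\le k_d<n,
\]
so that $N=m^{d-1}n$. Expanding $D_N(x)$ in Fourier series and using Parseval, we get
\[
\int_{\mathbb T^d}|D_N(x)|^2dx=N^2\sum_{0\ne\xi\in\Lambda^*}|\widehat{\chi_\Omega}(\xi)|^2,
\]
where $\Lambda^*=m\mathbb Z^{d-1}\times n\mathbb Z$ is the dual lattice. Indeed, the coefficients of $\sum_j\chi_{\Omega}(z_j+x)$ vanish unless $\xi\in\Lambda^*$, and there they equal $N\widehat{\chi_\Omega}(\xi)$ up to sign conventions.

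We then split $\Lambda^*\setminus\{0\}$ into two parts.
\begin{itemize}[label=--]
\item The directions with $|\Theta\cdot e_d|\le\beta_2$. Here $|\xi_d|\lesssim|\xi'|$, and the bound is $\kappa_2|\xi|^{-d-1}$.
\item The remaining directions. Here the bound is $\kappa_2|\xi|^{-d-2}$.
\end{itemize}
For $\rho$ not large we use the trivial bound $|\widehat{\chi_\Omega}|\le|\Omega|$; only finitely many lattice points of bounded norm can occur, and since $m,n\to\infty$ these are excluded for $N$ large.

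The points of the first part satisfy $\xi'\neq0$, so $|\xi|\ge m$. We sum over $\xi'\in m\mathbb Z^{d-1}\setminus\{0\}$ and, for each such $\xi'$, over the roughly $|\xi'|/n+1$ admissible values of $\xi_d$. This gives
\[
\sum\lesssim\sum_{\xi'}\Big(\frac{|\xi'|}{n}+1\Big)|\xi'|^{-d-1}\approx \frac{1}{n}m^{-1}+m^{-2},
\]
since $\sum_{\xi'\in m\mathbb Z^{d-1}}|\xi'|^{-d}\approx m^{-d}\cdot m^{d-1}\cdot m^{-?}$. Computing this carefully: $\sum_{0\ne k\in\mathbb Z^{d-1}}|mk|^{-s}=m^{-s}C_s$ for $s>d-1$. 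Hence the first part is $\lesssim m^{-d}/n+m^{-d-1}$.

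For the second part we split further according to whether $\xi'=0$ or not.
\begin{itemize}[label=--]
\item If $\xi'=0$, the points $\xi=(0,n\ell)$ contribute $\sum_{\ell\neq0}(n|\ell|)^{-d-2}\approx n^{-d-2}$.
\item If $\xi'\ne0$, the points lie in a cone around $e_d$ with $|\xi_d|\gtrsim|\xi'|$. Summing the bound $|\xi|^{-d-2}$ over $\xi_d\in n\mathbb Z$ with $|\xi_d|\gtrsim|\xi'|\ge m$ gives at most $\frac1n|\xi'|^{-d-1}+|\xi'|^{-d-2}$. Summing then over $\xi'$, this part is dominated by the first part.
\end{itemize}

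Altogether,
\[
\int|D_N|^2\lesssim N^2\big(m^{-d}n^{-1}+m^{-d-1}+n^{-d-2}\big).
\]
With $N=m^{d-1}n$ we have $N^2m^{-d}n^{-1}=m^{d-2}n$ and $N^2m^{-d-1}=m^{d-3}n^2$. We balance $m^{d-3}n^2$ against $N^2n^{-d-2}=m^{2d-2}n^{-d}$, which gives $n^{d+2}=m^{d+1}$. We then check that the remaining term $m^{d-2}n$ is not larger than the others (this step needs to be verified, and possibly the point set adjusted). Indeed, with $n=m^{(d+1)/(d+2)}$ we have $m^{d-2}n\le m^{d-3}n^2$ iff $m\le n$, which is false. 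So the term $N^2/(m^dn)$ from the first part is dominant, and we balance it instead with $n^{-d-2}$: $m^{d-2}n=m^{2d-2}n^{-d}$ gives $n^{d+1}=m^d$. This is consistent with $n\le m$, which makes $m^{d-3}n^2$ smaller. Then $N=m^{d-1+d/(d+1)}=m^{(d^2+d-1)/(d+1)}$, and the discrepancy is
\[
m^{d-2+d/(d+1)}=m^{(d^2-d-1)/(d+1)}=N^{(d^2-d-1)/(d^2+d-1)}=N^{1-\frac{d+1}{d^2+d-1}},
\]
as desired.

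We take $m_j=j^{d+1}$, $n_j=j^{d}$, and $N_j=m_j^{d-1}n_j$. The main obstacle is the careful lattice-point counting in the cone $|\xi_d|\lesssim|\xi'|$. In particular, the term $|\xi'|/n$ requires $\sum_{\xi'}|\xi'|^{-d}$ over $m\mathbb Z^{d-1}$, which converges since $d>d-1$. Handling the boundary between the cones is routine via overlapping estimates.
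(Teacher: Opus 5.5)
Your computation for the aligned case $v=e_d$ is essentially the paper's. You use the same lattice $\frac1m\mathbb{Z}^{d-1}\times\frac1n\mathbb{Z}$ with $m\approx N^{\frac{d+1}{d^2+d-1}}$ and $n\approx N^{\frac{d}{d^2+d-1}}$, split the dual lattice into the same two cones, and arrive at the same two dominant terms, $N^2m^{-d}n^{-1}=m^{d-2}n$ and $N^2n^{-d-2}=m^{2d-2}n^{-d}$ (the paper's $S_1$ and $S_2$).

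\textbf{The gap is the opening reduction} ``we may assume $v=e_d$''. This is not a legitimate change of coordinates. $D_N$ lives on $\mathbb{T}^d=\mathbb{R}^d/\mathbb{Z}^d$, Parseval only sees $\widehat{\chi_\Omega}$ at frequencies in $\mathbb{Z}^d$, and $\mathbb{Z}^d$ is not rotation invariant. The fact that $\sigma\Omega$ satisfies the hypothesis with $\sigma v$ does not help: the theorem concerns the fixed set $\Omega$, and the torus discrepancy of $\sigma\Omega$ is a different quantity. Nor can you rotate the point set, since a rotated lattice is not $\mathbb{Z}^d$-periodic. The alignment is genuinely needed. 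If, say, $|e_d\cdot v|\leqslant\beta_2$, the shortest dual vectors $(0,n\ell)$ of your lattice lie in the slow cone. The hypothesis then only bounds their contribution by $N^2\sum_{\ell\neq0}(n|\ell|)^{-d-1}\approx N^2n^{-d-1}$, which is $n$ times the target.

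\textbf{The missing idea, supplied in the paper, is an arithmetic alignment.}
\begin{enumerate}
\item Take $\sigma_0\in SO(d)$ with $\sigma_0e_d=v$. Fix $\beta_3\in(\beta_2,1)$. By density of $SO(d,\mathbb{Q})$ in $SO(d)$, choose a rational rotation $\sigma_1$ with $\|\sigma_0-\sigma_1\|<\beta_3-\beta_2$. The hypothesis then still holds with $v$ replaced by $v_1=\sigma_1e_d$ and $\beta_2$ replaced by $\beta_3$.
\item Choose $\kappa\in\mathbb{N}$ such that $A=\kappa\sigma_1$ has integer entries, and use the points $Az_j \bmod 1$ (repetitions allowed).
\item Since $\sum_je^{2\pi i\xi\cdot Az_j}=\sum_je^{2\pi iA^t\xi\cdot z_j}$, the only surviving frequencies are $\xi\in\mathbb{Z}^d\setminus\{0\}$ of the form $\xi=\kappa^{-1}\sigma_1 w$ with $w\in m\mathbb{Z}^{d-1}\times n\mathbb{Z}$.
\item For these, $|\xi|=\kappa^{-1}|w|$ and $\xi\cdot v_1/|\xi|=w_d/|w|$. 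So your aligned count applies verbatim to the sum over $w\neq0$, with $\kappa$ absorbed into the constants.
\end{enumerate}

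Separately, your final exponent computation has two slips that happen to cancel. First, $m^{d-2}n=m^{\frac{d^2-2}{d+1}}$, not $m^{\frac{d^2-d-1}{d+1}}$. Second, it is $\frac{d^2-2}{d^2+d-1}$, not $\frac{d^2-d-1}{d^2+d-1}$, that equals $1-\frac{d+1}{d^2+d-1}$. With $m_j=j^{d+1}$ and $n_j=j^d$, both dominant terms equal $j^{d^2-2}=N_j^{1-\frac{d+1}{d^2+d-1}}$, so your final choice of parameters is correct.
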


The assumptions in \eqref{eq:ipotesi thm1 da sopra} may be too restrictive, especially when $\Omega$ has flat regions, as the Fourier transform may decay too slowly in the corresponding directions to meet the required estimates.
This issue can be mitigated by introducing an additional averaging over rotations in the spirit of the works of Bilyk and Mastrianni \cite{BM2023} and Beretti \cite{Beretti2024}. Let us define
\[
D_N(x,\sigma)
:=\sum_{j=1}^N \chi_{-x+\sigma\Omega}(z_j)-N|\Omega|,
\qquad (x,\sigma)\in\mathbb{T}^d\times SO(d).
\]

\begin{thm}\label{thm:main2rot}
Let $\Omega\subseteq\left[-\frac{1}{2},\frac{1}{2}\right)^{d}$. Assume there exist positive constants $\kappa_2$ and $\beta_2$, a neighborhood $U$ of the identity of $SO(d)$, and $v\in\Sigma_{d-1}$ such that for $\rho$ sufficiently large and for every $\Theta\in\Sigma_{d-1}$ we have
\begin{align}
\int_U\left|\widehat{\chi_{\sigma\Omega}}(\rho\Theta)\right|^2d\sigma \leqslant\begin{cases}
        \frac{\kappa_2}{\rho^{d+1}}& |\Theta\cdot v|\leqslant\beta_2, \\
        \frac{\kappa_2}{\rho^{d+2}}& \beta_2<|\Theta\cdot v|, 
    \label{eq:ipotesi thm da sopra rotaz}
    \end{cases}
\end{align} 
then there exist a constant $c>0$ and a diverging sequence of integers $N_j$ such that for every $j$ there exist points $z_1,\ldots,z_{N_j}\in\mathbb{T}^d$ for which
\begin{align*}
\int_U\int_{\mathbb{T}^{d}}\left|D_{N_j}(x,\sigma)\right|^{2}dxd\sigma\leqslant c N_j^{1-\frac{d+1}{d^2+d-1}}.  
\end{align*}
\end{thm}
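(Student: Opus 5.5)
We construct the points as a (possibly anisotropic) lattice and use the Fourier expansion of the discrepancy; the argument follows the one for Theorem~\ref{thm:main2}, with pointwise bounds on $\widehat{\chi_\Omega}$ replaced by their averages over $U$. For fixed $\sigma$, Parseval on $\mathbb{T}^d$ gives
\[
\int_{\mathbb{T}^d}|D_N(x,\sigma)|^2dx=\sum_{m\in\mathbb{Z}^d\setminus\{0\}}\Bigl|\sum_{j=1}^N e^{2\pi i m\cdot z_j}\Bigr|^2\,\bigl|\widehat{\chi_{\sigma\Omega}}(m)\bigr|^2 .
\]
We choose $\mathcal P_N$ to be a lattice point set $\Lambda\cap\mathbb{T}^d$, where $\Lambda\supseteq\mathbb{Z}^d$ is a lattice with $N=[\Lambda:\mathbb{Z}^d]$ points in the torus. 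The exponential sum then equals $N$ when $m$ lies in the dual lattice $\Lambda^*\subseteq\mathbb{Z}^d$ and vanishes otherwise. Integrating over $U$ and exchanging sum and integral (Tonelli) yields
\[
\int_U\int_{\mathbb{T}^d}|D_N|^2\,dx\,d\sigma=N^2\sum_{m\in\Lambda^*\setminus\{0\}}\int_U|\widehat{\chi_{\sigma\Omega}}(m)|^2d\sigma .
\]
Since $\widehat{\chi_{\sigma\Omega}}$ is bounded by $|\Omega|$, the finitely many small $m$ below the threshold of \eqref{eq:ipotesi thm da sopra rotaz} contribute only boundedly per point and can be excluded by the lattice geometry. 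For all other $m$ we apply \eqref{eq:ipotesi thm da sopra rotaz} with $\rho=|m|$ and $\Theta=m/|m|$. The problem is thereby reduced to the same lattice sum as in Theorem~\ref{thm:main2}:
\[
N^2\Bigl(\sum_{m\in\Lambda^*\setminus\{0\},\ |\frac{m}{|m|}\cdot v|\le\beta_2}|m|^{-d-1}+\sum_{m\in\Lambda^*\setminus\{0\},\ |\frac{m}{|m|}\cdot v|>\beta_2}|m|^{-d-2}\Bigr).
\]

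For the dual lattice, we take (after a rotation, with $v$ replaced by a rational approximant as needed) $\Lambda^*$ anisotropic: spacing $M$ in the directions orthogonal to $v$ and spacing $L$ along $v$. Concretely, $\Lambda^*=M\mathbb{Z}^{d-1}\times L\mathbb{Z}$ in coordinates adapted to $v=e_d$, so that $N=M^{d-1}L$.

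Nonzero points in the cone $|\Theta\cdot v|\le\beta_2$ essentially have $|m'|\ge M$. Comparing with an integral, their sum is
\[
\sum\lesssim M^{-(d-1)}L^{-1}\int_{|\xi|\gtrsim M}|\xi|^{-d-1}d\xi\approx \frac{1}{NM}.
\]
A small cone around the $v$-axis also contains the points $(0,kL)$. Since $\beta_2<1$, these lie outside the flat cone, so they fall into the second sum, where they contribute $\approx L^{-d-2}$. The remaining lattice points in the second region contribute about $N^{-1}\int_{|\xi|\gtrsim M}|\xi|^{-d-2}\approx 1/(NM^2)$. The total is therefore
\[
N^2\Bigl(\frac{1}{NM}+L^{-d-2}\Bigr)=\frac{N}{M}+\frac{N^2}{L^{d+2}} .
\]

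Balancing the two terms under the constraint $N=M^{d-1}L$: write $L=M^a$, so $N=M^{d-1+a}$. Equating $N/M$ with $N^2L^{-d-2}$ gives $a(d+2)=d+a$, that is, $a=d/(d+1)$. Then $N=M^{(d^2+d-1)/(d+1)}$ and $N/M=N^{1-\frac{d+1}{d^2+d-1}}$, which is the claimed rate. We take $M_j=2^{j(d+1)}$, so that $L_j=2^{jd}$ and $N_j=M_j^{d-1}L_j$ are integers diverging to infinity.

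The main obstacle is the lattice counting near the boundary of the cone $|\Theta\cdot v|\le\beta_2$ and along the $v$-axis. We need the integral comparisons to hold uniformly in the anisotropy, and we need the only dangerous points in the second region to be the axis points $(0,kL)$. In particular, the lattice points with $m'=0$ must not appear in the slowly decaying first region, which holds because $\beta_2<1$; if needed we shrink $\beta_2$, which only weakens the hypothesis. If $v$ is not rational, the lattice cannot be aligned exactly with $v$. In that case we choose a rotated lattice, $\Lambda=R(\cdot)$ with $R\in SO(d)$ mapping $e_d$ to $v$, and note that $R\Lambda\cap$ torus still gives exact exponential sums whenever $R\Lambda\supseteq\mathbb{Z}^d$. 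If that fails, we instead use that \eqref{eq:ipotesi thm da sopra rotaz} is stable under a small change of $v$ (by shrinking $\beta_2$) and pick a rational direction close to $v$.
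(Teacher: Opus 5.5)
Your argument follows the paper's: Parseval averaged over $U$, a point set whose surviving frequencies form the anisotropic lattice $M\mathbb{Z}^{d-1}\times L\mathbb{Z}$ rotated so that the $L$-direction points along $v$, the same split into a cone part of size $N/M$ and an axis part of size $N^2L^{-d-2}$, and the same balance $L^{d+1}=M^d$. Your counting is correct. What fails as written is the reduction to a rational direction, which is unavoidable because $v$ is arbitrary.

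First, the hypothesis is stable under $v\mapsto v_1$ only if you \emph{enlarge} $\beta_2$. The bound $\kappa_2\rho^{-(d+1)}$ holds for all $\Theta$, because $\rho^{-(d+2)}\leqslant\rho^{-(d+1)}$. The bound $\kappa_2\rho^{-(d+2)}$, however, is available only on $\{|\Theta\cdot v|>\beta_2\}$. So for $|v-v_1|<\varepsilon$ you must pick $\beta_3\in(\beta_2,1)$ with $\varepsilon<\beta_3-\beta_2$; then $|\Theta\cdot v_1|>\beta_3$ implies $|\Theta\cdot v|>\beta_2$. Shrinking $\beta_2$ would demand the fast bound on a larger cone than the one assumed. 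By the same token, your earlier remark that shrinking $\beta_2$ ``only weakens the hypothesis'' is backwards, though you never use it. Since $\beta_3<1$, the axis points still fall in the fast region, so your count is unaffected.

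Second, a rational direction does not by itself give a lattice with $R\Lambda\supseteq\mathbb{Z}^d$, where $\Lambda=\frac1M\mathbb{Z}^{d-1}\times\frac1L\mathbb{Z}$. This inclusion forces $R^{-1}\mathbb{Z}^d\subseteq\mathbb{Q}^d$, i.e.\ $R\in SO(d,\mathbb{Q})$. For instance, $(1,1)/\sqrt2$ is spanned by an integer vector but is not $Re_2$ for any rational rotation $R$. In particular, for irrational $v$ your first option never applies.

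The missing ingredient is the density of $SO(d,\mathbb{Q})$ in $SO(d)$, which is what the paper uses. Take $\sigma_1\in SO(d,\mathbb{Q})$ close to a rotation $\sigma_0$ with $\sigma_0e_d=v$, set $v_1=\sigma_1e_d$, and let $\kappa$ be a common denominator of the entries of $\sigma_1$. If you choose $M$ and $L$ divisible by $\kappa$ (e.g.\ $M_j=\kappa\,2^{j(d+1)}$, $L_j=\kappa\,2^{jd}$), then $\sigma_1^{-1}\mathbb{Z}^d\subseteq\kappa^{-1}\mathbb{Z}^d\subseteq\Lambda$. So $R=\sigma_1$ is admissible, and the rest of your proof goes through verbatim.

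The paper avoids the divisibility condition differently. It maps the grid $\frac1G\mathbb{Z}^{d-1}\times\frac1L\mathbb{Z}$ by the integer matrix $A=\kappa\sigma_1$ modulo $1$, allowing repeated points. The frequencies with nonvanishing exponential sum are then $\kappa^{-1}\sigma_1(G\mathbb{Z}^{d-1}\times L\mathbb{Z})\cap\mathbb{Z}^d$, and the same count applies with constants depending on $\kappa$.
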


\bigskip

The paper is organized as follows. In Section \ref{sec_proof} we establish our main abstract discrepancy estimates, proving Theorem \ref{thm:main}, Theorem \ref{thm:main2}, and Theorem \ref{thm:main2rot}. In Section \ref{sec_barrel} we test these results on a particular domain, which we call the barrel, emphasizing both their effectiveness and limitations. More precisely, we state suitable estimates for the Fourier transform of the barrel which allow us to apply the abstract results of Section \ref{sec_proof} and derive the corresponding discrepancy bounds. Section \ref{sec_aux} contains several auxiliary results on oscillatory integrals and Fresnel-type functions, which are used in Section \ref{sec_fourier} to derive asymptotic formulas for the Fourier transform of the barrel. These asymptotics are then combined in Section \ref{sect:proof} to prove the Fourier estimates stated in Section \ref{sec_barrel}.

Throughout the paper, we will use the notation $A\lesssim B$ when there exists a constant $c>0$ independent of the relevant parameters, such that $A \leqslant cB$. We will write $A\approx B$ when $A\lesssim B$ and $B\lesssim A$. Throughout the paper,  constants will be denoted by \(c\). As usual,
the value of \(c\) may change from line to line.

\section{Proofs of the main results}\label{sec_proof}
In this section we prove the main results. The lower bound is obtained by combining a Fourier-analytic estimate with a Cassels--Montgomery type argument, while the upper bound follows from a suitable anisotropic lattice construction exploiting the directional decay assumptions.

The following proposition is well known and is essentially due to Cassels in dimension one and to Montgomery in higher dimensions. For a proof inspired by Siegel’s analytic argument for Minkowski’s convex body theorem see \cite[Lemma 26]{BT2022}. See also \cite{BGG} for a more general result in the setting of compact Riemannian manifolds.

\begin{prop}[Cassels--Montgomery]
\label{cassels}
Let $U$ be a neighborhood of the origin. Then there exists a positive
constant $c$ such that for every convex symmetric body $C$ in $\mathbb{R}^{d}$
and every finite sequence $z_{1},z_{2},\ldots,z_{N}\in\mathbb{T}^{d}$
we have
\[
\sum_{m\in\left(C\setminus U\right)\cap\mathbb{Z}^{d}}\left|\sum_{j=1}^{N}e^{2\pi im\cdot z_{j}}\right|^{2}\geqslant\frac{1}{2^{d}}N\left|C\right|-cN^{2}.
\]
\end{prop}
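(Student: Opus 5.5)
We plan to prove Proposition~\ref{cassels} via Siegel's analytic argument: build an auxiliary function on $\mathbb{T}^d$ with nonnegative Fourier coefficients supported in $C$, and evaluate a quadratic form in the point masses in two ways.

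\textbf{Construction.} Set $K=\frac12 C$, which is convex and symmetric with $|K|=2^{-d}|C|$. Define
\[
\varphi(x)=\sum_{n\in\mathbb{Z}^d}\chi_K(x+n),\qquad \psi=\varphi*\widetilde{\varphi},\quad \widetilde\varphi(x)=\varphi(-x).
\]
Since $\varphi$ is real,
\[
\widehat{\psi}(m)=|\widehat{\chi_K}(m)|^2\geqslant 0 \quad\text{for } m\in\mathbb{Z}^d .
\]
Its support condition we obtain in physical space instead. Alternatively, we work with $F(x)=\sum_n \chi_K*\chi_K(x+n)$. Then $\widehat F(m)=\widehat{\chi_K}(m)^2$, and this is real and nonnegative because $K$ is symmetric, so $\widehat{\chi_K}$ is real.

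This puts the positivity on the wrong side, so we dualize. We use the \emph{Fourier-side} version of Siegel's argument: take on $\mathbb{R}^d$ the function $g=|K|^{-1}\chi_K*\chi_K$. Then
\begin{itemize}[label={}]
\item $g$ is supported in $K+K=C$,
\item $g(0)=1$ and $0\leqslant g\leqslant 1$,
\item $\widehat g=|K|^{-1}\widehat{\chi_K}^2\geqslant 0$.
\end{itemize}

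\textbf{Two evaluations.} Consider
\[
S=\sum_{m\in\mathbb{Z}^d} g(m)\Big|\sum_j e^{2\pi i m\cdot z_j}\Big|^2 .
\]
Because $0\leqslant g\leqslant 1$ and $\operatorname{supp} g\subset C$,
\[
S\leqslant \sum_{m\in C\cap\mathbb{Z}^d}|\cdots|^2 .
\]
By Poisson summation,
\[
S=\sum_{j,k}\sum_{n\in\mathbb{Z}^d}\widehat g(n+z_j-z_k)\geqslant \sum_{j}\widehat g(0)=N|K|=2^{-d}N|C|,
\]
where we drop all terms except $j=k$, $n=0$ using $\widehat g\geqslant0$. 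Poisson summation is legitimate here because $g$ is compactly supported and Lipschitz, and since $\widehat g\geqslant0$ we can pass through an approximate identity if needed.

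\textbf{Removing $U$.} The terms with $m\in U$ contribute at most $\#(U\cap\mathbb{Z}^d)\,N^2$, since each exponential sum is bounded by $N$ in modulus. We may take $U$ bounded, since a larger $U$ only weakens the claim; this gives the constant $c=\#(U\cap\mathbb{Z}^d)$, independent of $C$. Hence
\[
\sum_{m\in (C\setminus U)\cap\mathbb{Z}^d}|\cdots|^2\geqslant S-cN^2\geqslant 2^{-d}N|C|-cN^2 .
\]

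The only delicate step is justifying Poisson summation together with the positivity of $\widehat g$. We handle it by mollifying $g$ with $\phi_\varepsilon*\widetilde{\phi_\varepsilon}$, which keeps $\widehat g\geqslant0$, enlarges the support only to $C+B_{2\varepsilon}$, and lets us pass to the limit $\varepsilon\to0$. The integer points of $C+B_{2\varepsilon}$ decrease to those of the closed set $C$, and we may assume $C$ is closed without loss of generality.
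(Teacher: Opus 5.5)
Your proof is correct and is essentially the Siegel-type argument the paper points to in \cite[Lemma 26]{BT2022}. You weight the lattice sum by $g=|K|^{-1}\chi_K*\chi_K$ with $K=\frac12 C$, bound it above using $0\leqslant g\leqslant 1$ and $\{g>0\}\subseteq K+K=C$, and bound it below by Poisson summation and $\widehat g\geqslant 0$, keeping only the terms $j=k$, $n=0$.

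There are two small points to fix. First, your justification for taking $U$ bounded is backwards. Enlarging $U$ deletes terms from the left-hand side and makes the inequality harder, and for $U=\mathbb{R}^d$ the inequality is false, since $c$ may not depend on $C$. So boundedness of $U$, or at least finiteness of $U\cap\mathbb{Z}^d$, has to be read as part of the hypothesis, as it is in the paper's application. Second, the mollification and the unproved ``$C$ closed'' reduction are unnecessary. The identity $\sum_m g(m)e^{2\pi i m\cdot w}=|K|^{-1}\sum_n|\widehat{\chi_K}(n+w)|^2$ holds for every $w$, by Parseval applied to the periodization of $\chi_K(x)e^{-2\pi i w\cdot x}$.
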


\begin{proof}[Proof of Theorem \ref{thm:main}]
Let
\begin{align*}
\widehat{D_{N}}\left(m,t\right)&= \int_{\mathbb{T}^{d}} D_{N}\left(x,t\right)  e^{-2\pi im\cdot x}dx.
\end{align*}
A standard computation shows that $\widehat{D_{N}}\left(0,t\right)=0$, and that for $m\neq0$
\[
\widehat{D_{N}}\left(m,t\right)=\sum_{j=1}^{N}e^{2\pi im\cdot z_{j}}\widehat{\chi_{t\Omega}}\left(m\right).
\]
Hence,
\[
\int_{0}^{1}\int_{\mathbb{T}^{d}}\left|D_{N}\left(x,t\right)\right|^{2}dxdt=\sum_{m\neq0}\left|\sum_{j=1}^{N}e^{2\pi im\cdot z_{j}}\right|^{2}\int_{0}^{1}\left|\widehat{\chi_{t\Omega}}\left(m\right)\right|^{2}dt.
\]
Applying \eqref{eq:ipotesi thm1}, and possibly adjusting the constant $\kappa_1$, if $\left|m\right|$ is sufficiently large we obtain
\begin{align}
\int_{0}^{1}\left|\widehat{\chi_{t\Omega}}\left(m\right)\right|^{2}dt\geqslant & \int_{1/2}^{1}\left|t^{d}\widehat{\chi_{\Omega}}\left(tm\right)\right|^{2}dt
\label{eq:controllo da sotto}
\geqslant  \frac{c}{\left|m\right|} \int_{\left|m\right|/2}^{\left|m\right|}\left|\widehat{\chi_{\Omega}}\left(\rho\frac{m}{\left|m\right|}\right)\right|^{2}d\rho \\
\geqslant & \begin{cases}
\frac{\kappa_1}{\left|m\right|^{d+1}} \qquad |m\cdot v|\leqslant \beta_1\left|m\right| \\
 \frac{\kappa_1}{\left|m\right|^{d+2}}\qquad \beta_1\left|m\right|<|m\cdot v|.
\end{cases}
\notag
\end{align}
Let  $0<Y<X$ be parameters to be chosen later. Let
\[
F_{0}=\left\{ \left(x',x_{d}\right)\in\mathbb{R}^{d}:\left|x'\right|<X, \left|x_{d}\right|<Y\right\}
\]
and for every $\omega\in\Sigma_{d-1}$  let $F_{\omega}$ be a copy of $F_{0}$
obtained by a rotation $R_\omega$ that maps $e_{d}=\left(0,\ldots,0,1\right)$
to $\omega.$ Let $\beta_0<\beta_1$, let
\[
C_{\beta_0}=\left\{ \omega\in\Sigma_{d-1}:\omega\cdot v\geqslant\sqrt{1-\beta_0^2}\right\},
\]
and for every $\delta>0$ let $T_{\delta}$ be a maximal set of $\delta$-separated points in $C_{\beta_0}$. Observe
that for sufficiently small $\delta$,
\begin{equation}
    \text{card}\left(T_{\delta}\right) \approx {\delta^{1-d}}.
\label{eq:approx_card}
\end{equation}
Indeed, if $\sigma$ denotes the measure on $\Sigma_{d-1}$ and $C(\omega,\delta)$ denotes the spherical cap centered in $\omega$ of radius $\delta$, we have,
\[
c\,\text{card}(T_\delta)\delta^{d-1}\leqslant
\sum_{\omega\in T_\delta}\sigma\left(C(\omega,\delta)\right)= 
\sigma\left(\bigcup_{\omega\in T_\delta}C(\omega,\delta)\right)\leqslant \sigma\left(\Sigma_{d-1}\right).
\]
Moreover, by the maximality of $T_\delta$, we have that 
\[
C_{\beta_0} \subseteq \bigcup_{\omega\in T_\delta} C(\omega,3\delta).
\]
Then, 
\begin{align*}
\sigma\left(C_{\beta_0}\right)\leqslant \sum_{\omega\in T_\delta}\sigma\left(C(\omega, 3\delta)\right)\leqslant c\,\text{card}(T_\delta)\delta^{d-1},
\end{align*}
so that \eqref{eq:approx_card} follows.
Let us now fix $\delta=\frac Y{2X}$ and let
\[
\Phi\left(m\right)=\sum_{\omega\in T_{\delta}}\chi_{F_{\omega}}\left(m\right).
\]
We look for a constant $\Gamma$ such that, for every $m\in\mathbb{Z}^{d}$,
we have
\begin{equation}
\Gamma\Phi\left(m\right)\leqslant\begin{cases}
\frac{\kappa_1}{\left|m\right|^{d+1}} & \left|m\cdot v \right| \leqslant\beta_1\left|m\right|,\\
\frac{\kappa_1}{\left|m\right|^{d+2}} & \beta_1\left|m\right|<\left|m\cdot v\right|.
\end{cases}
\label{eq:Fourier below}
\end{equation}
Let $\kappa_0>0$ be such that $\frac{1}{\kappa_0}+\beta_{0}<\beta_1$. We assume that $m\in\bigcup\limits_{\omega\in T_{\delta}}F_{\omega}$ otherwise there is nothing to prove and we consider the cases $\left|m\right|>\kappa_0Y$ and $\left|m\right|\leqslant \kappa_0 Y$ separately. Assume first $\left|m\right|>\kappa_0Y$. Since $m\in F_\omega$ for some $\omega\in T_\delta$ and $Y<X$ we have $\left|m\right|\leqslant \sqrt2 X$. Moreover we have
${\left|m\cdot v \right|} \leqslant\beta_1 {\left|m\right|}.$
Indeed, let $\zeta \in F_0$ be such that $m=R_\omega \zeta$ and let us write
\[ m\cdot v=\left(m\cdot\omega\right)\left(v\cdot\omega\right)+m\cdot\left[v-\left(v\cdot\omega\right)\omega\right].
\]
Then
\begin{align*}
    \left|m\cdot v\right|\leqslant & \left|m\cdot\omega\right|+\left|m\right|\left|v-\left(v\cdot\omega\right)\omega\right| \\
\leqslant	& \left|\zeta\cdot R^{-1}_{\omega}\omega\right|+\left|m\right|\sqrt{1-\left(\omega\cdot v\right)^{2}}\leqslant\left|\zeta\cdot e_{d}\right|+\left|m\right|\beta_{0} \\
\leqslant	& Y+\left|m\right|\beta_{0}\leqslant\left(\frac{1}{\kappa_{0}}+\beta_{0}\right)\left|m\right|\leqslant\beta_{1}\left|m\right|.
\end{align*}
It follows that, to have \eqref{eq:Fourier below} satisfied, we need a positive constant  $\Gamma$ such that 
\begin{equation}
\Gamma\Phi\left(m\right)\leqslant\frac{\kappa_1}{\left|m\right|^{d+1}}.
\label{Gamma_1}
\end{equation}
Since
\begin{align*}
\Phi\left(m\right)&= \sum_{\omega\in T_{\delta}}\chi_{F_{\omega}}\left(m\right)=\text{card}\left\{ \omega\in T_{\delta}:m\in F_{\omega}\right\},
\end{align*}
the idea is to  estimate
$\sigma \left(\left\{ \omega\in C_{\beta_0}:m\in F_{\omega}\right\} \right)$ and then deduce a bound for the above cardinality.
Let $\omega\in \left\{ \omega\in C_{\beta_0}:m\in F_{\omega}\right\}$, since $ m=R_\omega\zeta$, for some $\zeta \in F_0$ we have
$
\left|m\cdot\omega\right|=\left|\zeta_d\right|\leqslant Y.
$
Hence,
\[
\left\{ \omega\in C_{\beta_0}:m \in F_{\omega}\right\} \subseteq \left\{\omega \in C_{\beta_0} : \left| \frac{m}{|m|}\cdot \omega\right|\leqslant \frac{Y}{|m|}\right\}=S_{\beta_0}.
\]
Observe that $S_{\beta_0}$ is the intersection of a spherical slab of thickness $2\frac{Y}{\left|m\right|}$ with  $C_{\beta_0}$ so that $\sigma(S_{\beta_0})\leqslant c \frac{Y}{\left|m\right|}$. Also, observe that $\delta=\frac{Y}{2X}<\frac{Y}{|m|}$, so that if $\omega\in T_\delta\cap S_{\beta_0}$ then 
\[
\sigma(C(\omega,\delta)\cap S_{\beta_0})\geqslant\frac 12 \sigma(C(\omega,\delta)).
\]
Since 
\[
\bigcup_{\omega\in T_\delta\cap S_{\beta_0}}\left(C(\omega,\delta)\cap S_{\beta_0} \right)\subseteq S_{\beta_0},
\]
we have
\[
\sum_{\omega\in T_\delta\cap S_{\beta_0}} \sigma\left(C(\omega,\delta)\cap S_{\beta_0} \right)\leqslant \sigma(S_{\beta_0}).
\]
Therefore 
$
 c\delta^{d-1} \text{card} \left(T_\delta\cap S_{\beta_0}\right) \leqslant \sigma(S_{\beta_0}).$
Hence we have 
\[
\Phi\left(m\right)\leqslant\text{card}\left\{ \omega\in T_{\delta}:m\in F_{\omega}\right\} \leqslant c \frac{Y}{\left|m\right|\delta^{d-1}}=c\frac{X^{d-1}}{\left|m\right|Y^{d-2}}.
\]
It follows that \eqref{Gamma_1} is satisfied if 
\[
\Gamma\frac{c X^{d-1}}{\left|m\right|Y^{d-2}}\leqslant\frac{\kappa_1}{\left|m\right|^{d+1}}.
\]
This in turn gives
\begin{equation}
\Gamma\leqslant\inf_{\substack{\left|m\right|>\kappa_0 Y\\ m\in\bigcup\limits_{\omega\in T_{\delta}}F_{\omega}}}\frac{cY^{d-2}}{X^{d-1}\left|m\right|^{d}}=c\,\frac{Y^{d-2}}{X^{2d-1}}.
\label{eq:Gamma out}
\end{equation}
Assume now $\left|m\right|\leqslant \kappa_0 Y$. Then by \eqref{eq:approx_card}
\[
\Phi\left(m\right)\leqslant\text{card}\left(T_{\delta}\right)\leqslant c\delta^{1-d}=c\frac{X^{d-1}}{Y^{d-1}}.
\]
It follows that
\[
\Gamma\Phi\left(m\right)\leqslant\frac{\kappa_1}{\left|m\right|^{d+2}}
\]
holds whenever
\[
c\,\Gamma\frac{X^{d-1}}{Y^{d-1}}\leqslant\inf_{\left|m\right|\leqslant \kappa_0 Y}\frac{c}{\left|m\right|^{d+2}}=\frac{c}{Y^{d+2}},
\]
that is
\begin{equation}
\Gamma\leqslant\frac{c}{Y^{d+2}}\frac{Y^{d-1}}{X^{d-1}}=\frac{c}{X^{d-1}Y^{3}}.
\label{eq:Gamma_in}
\end{equation}
Combining \eqref{eq:Gamma out} and \eqref{eq:Gamma_in} we obtain
\[
\Gamma\leqslant\min\left(\frac{c}{X^{d-1}Y^{3}},\frac{cY^{d-2}}{X^{2d-1}}\right)=\frac{c}{X^{d-1}Y^3}\min\left(1,\frac{Y^{d+1}}{X^{d}}\right).
\]
This suggests $X^{d}=Y^{d+1}$.
We also require that the measure of every $F_\omega$ is a positive fraction of the number of points $N$. Namely we set 
$X^{d-1}Y= \gamma N$ with $\gamma$ to be chosen later. 
The above two conditions give
\[
X=c_{1}N^{\frac{d+1}{d^{2}+d-1}}
\]
and
\[
Y=c_{2}N^{\frac{d}{d^{2}+d-1}}.
\]
Therefore
\[
\text{card}T_{\delta}\approx c\frac{X^{d-1}}{Y^{d-1}}=cN^{\frac{d-1}{d^{2}+d-1}}
\]
and
\[
\Gamma=\frac{c}{Y^{3}X^{d-1}}=c_{3}N^{-\frac{d^{2}+3d-1}{d^{2}+d-1}}.
\]
By \eqref{eq:controllo da sotto} and \eqref{eq:Fourier below} there exists a neighborhood of the origin $U$ such that for $m\in \mathbb{Z}^d\setminus U$ we have
\[
\int_{0}^{1}\left|\widehat{\chi_{t\Omega}}\left(m\right)\right|^{2}dt\geqslant\Gamma\Phi\left(m\right).
\]
Therefore, recalling that $|F_\omega|=\gamma N$, we have
\begin{align*}
\int_{0}^{1}\int_{\mathbb{T}^{d}}\left|D_{N}\left(x,t\right)\right|^{2}dxdt \geqslant &\sum_{m\in\mathbb{Z}^d\setminus U}\left|\sum_{j=1}^{N}e^{2\pi im\cdot z_{j}}\right|^{2}\Gamma\sum_{\omega\in T_{\delta}}\chi_{F_{\omega}}\left(m\right)\\
= & \Gamma\sum_{\omega\in T_{\delta}}\sum_{m\in\mathbb{Z}^d\setminus U}\chi_{F_{\omega}}\left(m\right)\left|\sum_{j=1}^{N}e^{2\pi im\cdot z_{j}}\right|^{2}\\
\geqslant & \Gamma\sum_{\omega\in T_{\delta}}\left(\frac{1}{2^{d}}\gamma N^{2}-cN^{2}\right), 
\end{align*}
where we applied Proposition \ref{cassels} on each $F_\omega$. 
Choosing $\gamma$ large enough yields
\[
\int_{0}^{1}\int_{\mathbb{T}^{d}}\left|D_{N}\left(x,t\right)\right|^{2}dxdt\geqslant
c\,\Gamma\,\text{card}\left(T_{\delta}\right)  N^{2}\geqslant
cN^{1-\frac{d+1}{d^{2}+d-1}}.
\]
\end{proof}
We now prove the upper bound for the discrepancy via an anisotropic lattice construction exploiting \eqref{eq:ipotesi thm1 da sopra}.

\begin{proof}[Proof of Theorem \ref{thm:main2}]
Our argument requires a lattice rotated by a matrix with rational entries. We can achieve this by slightly modifying the cone in the estimate \eqref{eq:ipotesi thm1 da sopra}. More precisely, let $\sigma_0\in SO(d)$ be such that $v=\sigma_0 e_d$. Let $\beta_3\in(\beta_2,1)$ and let $0<\varepsilon<\beta_3-\beta_2$. Since $SO(d,\mathbb{Q})$ is dense in $SO(d)$ we can find  $\sigma_1\in SO(d,\mathbb{Q})$ such that $\left\Vert \sigma_{0}-\sigma_{1}\right\Vert <\varepsilon$. Let $v_1=\sigma_1 e_d$. Then
\[
\left|\widehat{\chi_{\Omega}}(\rho\Theta)\right|^{2}\leqslant\begin{cases}
\frac{\kappa_{2}}{\rho^{d+1}} & |\Theta\cdot v_{1}|\leqslant\beta_{3},\\
\frac{\kappa_{2}}{\rho^{d+2}} & \beta_{3}<|\Theta\cdot v_{1}|.
\end{cases}
\]
Indeed, the estimate in the case $|\Theta\cdot v_{1}|\leqslant\beta_{3}$ follows directly from \eqref{eq:ipotesi thm1 da sopra}. Assume $\beta_{3}<|\Theta\cdot v_{1}|$. Then 
\[ \left|\Theta\cdot v\right|\geqslant\left|\Theta\cdot v_{1}\right|-\left|\Theta\cdot\left(v-v_{1}\right)\right|>\beta_{3}-\left|\sigma_{0}e_{d}-\sigma_{1}e_{d}\right|\geqslant\beta_{3}-\varepsilon
\]
and the estimate for this second case follows again from \eqref{eq:ipotesi thm1 da sopra}.

Now, let $N_{0}$ be a positive integer, let 
\[
G=\left[N_{0}^{\frac{d+1}{d^{2}+d-1}}\right],
\]
and
\[
L=\left[N_{0}^{\frac{d}{d^{2}+d-1}}\right],
\]
where $\left[\cdot\right]$ denotes the integer parts and let $N=G^{d-1}L$. Note that for large $N_{0}$, we have $N\approx N_{0}$. Let
\[
\mathcal{P}_{N}=\left(\frac{1}{G}\mathbb{Z}^{d-1}\times\frac{1}{L}\mathbb{Z}\right)\cap\left[-\frac{1}{2},\frac{1}{2}\right)^{d},
\]
and let {$\left\{ z_{j}\right\} ^{N}_{j=1}$ }be any enumeration
of the points in $\mathcal{P}_{N}$. Let $\kappa\in\mathbb N$ be such that
$A=\kappa\sigma_1$ has integer entries and let $\widetilde{z}_{j}=Az_{j}\mod 1$. The resulting finite sequence may have repetitions, which are allowed. Identifying the $d$-dimensional
torus $\mathbb{T}^{d}$ with the cube $\left[-\frac{1}{2},\frac{1}{2}\right)^{d}$ we can
assume that $\widetilde{z}_{j}\in\mathbb{T}^{d}$.

We now estimate the discrepancy associated with the points $\widetilde{z}_j$. We have
\[
\int_{\mathbb{T}^{d}}\left|D_{N}\left(x\right)\right|^{2}dx=\sum_{m\neq0}\left|\sum_{j=1}^N e^{2\pi im\cdot \widetilde{z}_j}\right|^{2}\left|\widehat{\chi_{\Omega}}\left(m\right)\right|^{2}.
\]
Since
\[
\sum^{N}_{j=1}e^{2\pi im\cdot\widetilde{z}_{j}}=\sum^{N}_{j=1}e^{2\pi im\cdot Az_{j}}=\sum^{N}_{j=1}e^{2\pi iA^{t}m\cdot z_{j}}
\]
we have
\[
\sum^{N}_{j=1}e^{2\pi im\cdot\widetilde{z}_{j}}=\begin{cases}
G^{d-1}L & \text{if \ensuremath{A^{t}m\in G\mathbb{Z}^{d-1}\times L\mathbb{Z}}},\\
0 & \text{otherwise.}
\end{cases}
\]
Indeed, let $A^t m\in G\mathbb{Z}^{d-1}\times L\mathbb{Z}$ and observe
that if $z\in\mathcal{P}_N$ then $A^tm\cdot z\in\mathbb{Z}$. It
follows that 
\[
\sum^{N}_{j=1}e^{2\pi im\cdot\widetilde{z}_{j}}=\sum_{j=1}^N 1=G^{d-1}L.
\]
Now, let $A^t m\notin G\mathbb{Z}^{d-1}\times L\mathbb{Z}$. It follows
that there exists $z'\in\mathcal{P}_{N}$ such that $e^{2\pi i A^t m\cdot z'}\neq1$,
hence
\[
e^{2\pi i A^t m\cdot z'}\sum^{N}_{j=1}e^{2\pi i A^t m\cdot  z_{j}}=\sum^{N}_{j=1}e^{2\pi i A^t m\cdot  (z_{j}+z')}=\sum^{N}_{j=1}e^{2\pi i A^t m\cdot z_{j}}
\]
and therefore $\sum^{N}_{j=1}e^{2\pi i A^t m\cdot\widetilde{z}_{j}}=0$. 

Since $\left(A^{t}\right)^{-1}=\kappa^{-1}\sigma_{1}$ we have
$$
\mathcal{R}=	\left\{ m\in\mathbb{Z}^{d}\setminus\left\{ 0\right\} :A^{t}m\in G\mathbb{Z}^{d-1}\times L\mathbb{Z}\right\} 
=	\kappa^{-1}\sigma_{1}\left(G\mathbb{Z}^{d-1}\times L\mathbb{Z}\right)\cap\left(\mathbb{Z}^{d}\setminus\left\{ 0\right\} \right)
$$
and therefore 
\begin{align*}
 & \sum_{m\in\mathbb{Z}^{d}\setminus\left\{ 0\right\} }\left|\sum^{N}_{j=1}e^{2\pi im\cdot\widetilde{z}_{j}}\right|^{2}\left|\widehat{\chi_{\Omega}}\left(m\right)\right|^{2}= G^{2d-2}L^{2}\sum_{m\in\mathcal{R}}\left|\widehat{\chi_{\Omega}}\left(m\right)\right|^{2}\\
= & G^{2d-2}L^{2}\sum_{\kappa^{-1}\sigma_{1}\left(Gn',Ln_{d}\right)\in\mathcal{R}}\left|\widehat{\chi_{\Omega}}\left(\kappa^{-1}\sigma_{1}\left(Gn',Ln_{d}\right)\right)\right|^{2}\\
\leqslant & G^{2d-2}L^{2}\sum_{n\in\mathbb{Z}^{d}\setminus\{0\}}\left|\widehat{\chi_{\Omega}}\left(\kappa^{-1}\sigma_{1}\left(Gn',Ln_{d}\right)\right)\right|^{2}.
\end{align*}
Writing
\begin{align*}\kappa^{-1}\sigma_{1}\left(Gn',Ln_{d}\right)=\left|\left(Gn',Ln_{d}\right)\right|\kappa^{-1}\sigma_{1}\frac{\left(Gn',Ln_{d}\right)}{\left|\left(Gn',Ln_{d}\right)\right|}\end{align*}
and noting that
$\sigma_{1}\frac{\left(Gn',Ln_{d}\right)}{\left|\left(Gn',Ln_{d}\right)\right|}\cdot v_1=\frac{\left(Gn',Ln_{d}\right)}{\left|\left(Gn',Ln_{d}\right)\right|}\cdot e_{d}$,
we obtain
\[
\left|\widehat{\chi_{\Omega}}\left(\kappa^{-1}\sigma_{1}\left(Gn',Ln_{d}\right)\right)\right|^{2}\leqslant\begin{cases}
\frac{c}{\left|\left(Gn',Ln_{d}\right)\right|^{d+1}} & \text{if }\frac{\left|Ln_d\right|}{\left|\left(Gn',Ln_{d}\right)\right|}\leqslant\beta_3,\\
\frac{c}{\left|\left(Gn',Ln_{d}\right)\right|^{d+2}} & \text{if }\frac{\left|Ln_d\right|}{\left|\left(Gn',Ln_{d}\right)\right|}>\beta_3.
\end{cases}
\]
Equivalently 
\[
\left|\widehat{\chi_{\Omega}}\left(\kappa^{-1}\sigma_{1}\left(Gn',Ln_{d}\right)\right)\right|^{2}\leqslant\begin{cases}
\frac{c}{\left|\left(Gn',Ln_{d}\right)\right|^{d+1}} & \text{if }|Ln_d|\leqslant\eta|Gn'|,\\
\frac{c}{\left|\left(Gn',Ln_{d}\right)\right|^{d+2}} & \text{if }|Ln_d|>\eta|Gn'|
\end{cases}
\]
for $\eta=\frac{\beta_3}{\sqrt{1-\beta_3^2}}$. Therefore
\begin{align*}
\int_{\mathbb{T}^{d}}\left|D_{N}\left(x\right)\right|^{2}dx 
\leqslant & G^{2d-2}L^{2}\sum_{n\neq0,\left|Ln_{d}\right|\leqslant\eta\left|Gn'\right|}\frac{\kappa_{2}}{\left|Gn'\right|^{d+1}} +G^{2d-2}L^{2}\sum_{n\neq0,\left|Ln_{d}\right|>\eta\left|Gn'\right|}\frac{\kappa_{2}}{\left|Ln_{d}\right|^{d+2}}\\
= & S_{1}+S_{2}.
\end{align*}
For the first sum, since $G>L$ we have
\begin{align*}
S_{1}\lesssim & G^{d-3}L^{2}\sum_{n\neq0,\left|Ln_{d}\right|\leqslant\eta\left|Gn'\right|}\frac{1}{\left|n'\right|^{d+1}}= G^{d-3}L^{2}\sum_{n'\neq0}\frac{1}{\left|n'\right|^{d+1}}\sum_{ |n_{d}|\leqslant\eta\frac{G}{L}\left|n'\right|}1\\
\lesssim & G^{d-3}L^{2}\sum_{n'\neq0}\frac{1}{\left|n'\right|^{d+1}}\left(1+2\eta\frac{G}{L}\left|n'\right|\right)\lesssim G^{d-3}L^{2}\left(1+\frac{G}{L}\right)\lesssim  G^{d-2}L.
\end{align*}
For the second sum we have
\begin{align*}
S_{2}\lesssim & G^{2d-2}G^{-d}\sum_{n\neq0,\left|Ln_{d}\right|>\eta\left|Gn'\right|}\frac{1}{\left|n_{d}\right|^{d+2}}=G^{2d-2}L^{-d}\sum_{n_{d}\neq0}\frac{1}{\left|n_{d}\right|^{d+2}}\sum_{\left|n'\right|<\eta^{-1}\frac{L}{G}\left|n_{d}\right|}1\\
\lesssim & G^{2d-2}L^{-d}\sum_{n_{d}\neq0}\frac{1}{\left|n_{d}\right|^{d+2}}\left(1+\left|\eta^{-1}\frac{L}{G}n_{d}\right|^{d-1}\right)
\lesssim  G^{2d-2}L^{-d}\left(1+\left(\frac{L}{G}\right)^{d-1}\right)
\lesssim  G^{2d-2}L^{-d}.
\end{align*}
Consequently,
\begin{align*}
\int_{\mathbb{T}^{d}}\left|D_{N}\left(x\right)\right|^{2}dx\lesssim & G^{d-2}L+G^{2d-2}L^{-d}\lesssim N_{0}^{1-\frac{d+1}{d^{2}+d-1}}\lesssim N^{1-\frac{d+1}{d^{2}+d-1}}.
\end{align*}

\end{proof}

\begin{proof}[Proof of Theorem \ref{thm:main2rot}]
Since
\[
\int_U\int_{\mathbb{T}^{d}}\left|D_{N}\left(x,\sigma\right)\right|^{2}dxd\sigma=\sum_{m\neq0}\left|\sum_{j=1}^N e^{2\pi im\cdot{z}_j}\right|^{2}\int_U\left|\widehat{\chi_{\sigma\Omega}}\left(m\right)\right|^{2}d\sigma
\]
the proof follows the same argument as that of Theorem \ref{thm:main2},  with the pointwise bounds replaced by their averaged counterparts.
\end{proof}

\section{Discrepancy estimates for the barrel}\label{sec_barrel}
In this section, we apply our results to a particular convex body, which we refer to as the {barrel}, that exhibits all the geometric features known to play a key role in discrepancy theory: flat regions, curved regions, and edges.  
Let $0<\alpha<1$ and $0<\beta<\sqrt{1-\alpha^{2}}$. We define the barrel $\mathcal{B}=\mathcal{B}_{\alpha,\beta}$ by
\[
\mathcal{B}_{\alpha,\beta}=\left\{ \left(x,z\right)\in\mathbb{R}^{d-1}\times\mathbb{R}:\left(\left|x\right|+\alpha\right)^{2}+z^{2}\leqslant1,\left|z\right|\leqslant\beta\right\} .
\]

We first establish lower bounds on the averages of $\widehat{\chi_\mathcal{B}}$.

\begin{thm}
\label{thm:0}There exist positive constants $\varepsilon$ and $c$ such that, for all sufficiently large $A$, the following estimates hold.
\begin{enumerate}
\item 
Let $\left|\Theta_{d}\right|\leqslant\beta$, then
\[
\frac{1}{A}\int_{A}^{2A}\left|\widehat{\chi_{\mathcal{B}}}\left(\rho\Theta\right)\right|^{2}d\rho\geqslant\frac{c}{A^{d+1}}.
\]
\item Let $\beta<\left|\Theta_{d}\right|\leqslant\beta+\varepsilon$,
then
\begin{align*}
\frac{1}{A}\int_{A}^{2A}\left|\widehat{\chi_{\mathcal{B}}}\left(\rho\Theta\right)\right|^{2}d\rho & \geqslant\frac{c}{A^{d+1}}\frac{1}{1+\left(A^{1/2}\left(\left|\Theta_{d}\right|-\beta\right)\right)^{2}}.
\end{align*}
\item Let $\beta+\varepsilon<\left|\Theta_{d}\right|\leqslant1$, then
\[
\frac{1}{A}\int_{A}^{2A}\left|\widehat{\chi_{\mathcal{B}}}\left(\rho\Theta\right)\right|^{2}d\rho\geqslant\frac{c}{A^{2}}\frac{1}{1+\left(A\left|\Theta'\right|\right)^{d}}.
\]
\end{enumerate}
\end{thm}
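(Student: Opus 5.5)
The plan is to reduce everything to the geometry of the barrel via the divergence theorem and stationary phase, and then to handle the oscillation through the average in $\rho$. We write $\Theta=(\Theta',\Theta_d)$ and exploit the rotational symmetry of $\mathcal{B}$ in the $x$-variables. Setting $r=|\Theta'|$, we may take $\Theta'=re_1$. Slicing in $z$ gives
\[
\widehat{\chi_{\mathcal{B}}}(\rho\Theta)=\int_{-\beta}^{\beta}e^{-2\pi i\rho\Theta_d z}\,\widehat{\chi_{B_{R(z)}}}(\rho r)\,dz,\qquad R(z)=\sqrt{1-z^2}-\alpha,
\]
where $B_{R}$ is the $(d-1)$-dimensional ball. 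Its Fourier transform is a Bessel function, $R^{d-1}(\rho r R)^{-d/2}J_{d/2}(2\pi\rho rR)\cdot c$, and for $\rho r$ large it has the standard asymptotics
\[
c\,R^{\frac{d-2}{2}}(\rho r)^{-\frac d2}\cos\!\Big(2\pi\rho rR-\tfrac{\pi(d+1)}{4}\Big)+O\big((\rho r)^{-\frac d2-1}\big).
\]
Inserting this produces one-dimensional oscillatory integrals in $z$ with phases $\rho(\pm rR(z)-\Theta_d z)$. Their critical points satisfy $\mp rz/\sqrt{1-z^2}=\Theta_d$, that is $z=\mp\Theta_d$ when $r=\sqrt{1-\Theta_d^2}$. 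Such a point lies inside $(-\beta,\beta)$ exactly when $|\Theta_d|<\beta$; this is the curved, toroidal part of the boundary. The endpoints $z=\pm\beta$ correspond to the two circular edges.

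Case (1), $|\Theta_d|\leqslant\beta$, needs care near $|\Theta_d|=\beta$. When $|\Theta_d|<\beta-\epsilon'$, stationary phase gives a main term of size $(\rho r)^{-d/2}\rho^{-1/2}\approx\rho^{-(d+1)/2}$, with a nonvanishing amplitude (curvature $1$ in $z$). The endpoint contributions are $O(\rho^{-d/2-1})$ and hence lower order. The main term is $a\,\rho^{-(d+1)/2}\big(e^{2\pi i\rho\phi_1}+\text{other phase terms}\big)$, with at most two distinct phases $\phi_\pm$ coming from $\pm$. When we compute $\frac1A\int_A^{2A}|\cdot|^2$, the cross terms between different phases contribute $O(A^{-d-2}/|\phi_+-\phi_-|)$, while the diagonal terms give $cA^{-d-1}$. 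The phases $\phi_\pm$ are $\pm(rR(z_0)+\ldots)$, and they differ by a quantity bounded below uniformly. When the two cross phases coincide, as in the cosine structure, we instead use $\frac1A\int_A^{2A}\cos^2(2\pi\rho\phi+\theta)\,d\rho\geqslant c$ for $A\phi$ large. For $|\Theta_d|$ close to $\beta$, the stationary point merges with the endpoint. Here I would use the uniform Fresnel-type asymptotics announced for Section \ref{sec_aux}: the $z$-integral equals $\rho^{-1/2}\,\mathcal{F}(\rho^{1/2}(|\Theta_d|-\beta))$ plus $O(\rho^{-1})$, where $\mathcal{F}(s)=\int_s^\infty e^{i\pi u^2}du$-like functions satisfy $|\mathcal{F}(s)|\approx(1+|s|)^{-1}$ for $s\geqslant0$ and $|\mathcal{F}(s)|\geqslant c$ for $s\leqslant0$. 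Squaring and averaging then gives both (1) near the transition and case (2), with the factor $\big(1+(A^{1/2}(|\Theta_d|-\beta))^2\big)^{-1}$. The averaging again kills cross terms, and it is needed because the Fresnel integral is evaluated at $\rho$-dependent arguments, which vary by a bounded factor over $[A,2A]$.

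Case (3) is the flat-top regime. For $|\Theta_d|>\beta+\varepsilon$ there is no stationary point, so integrating by parts in $z$ leaves only the endpoint terms. These are the two flat discs at $z=\pm\beta$, giving
\[
\widehat{\chi_{\mathcal{B}}}(\rho\Theta)=\frac{1}{2\pi i\rho\Theta_d}\Big(e^{-2\pi i\rho\Theta_d\beta}-e^{2\pi i\rho\Theta_d\beta}\Big)\widehat{\chi_{B_{R(\beta)}}}(\rho r)+\text{error}.
\]
The bracket averages to a constant in $\rho$, giving $\sin^2(2\pi\rho\Theta_d\beta)$ with average about $1/2$. The disc factor satisfies $|\widehat{\chi_{B}}(\rho r)|^2\approx(1+\rho r)^{-d}$ after averaging: it is trivially order one for $\rho r\lesssim1$, and for $\rho r$ large it is the Bessel asymptotic, whose cosine squared averages positively. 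This yields $A^{-2}(1+(Ar)^d)^{-1}$. The error from the second integration by parts is $O(\rho^{-2})$ times derivative terms of size $(\rho r)^{-d/2}\rho r$. That is smaller only when $r\lesssim1$, which holds, but the relative gain is only $\rho^{-1}\cdot\rho r=r$. I would therefore treat the error via the exact $\partial_z$ expansion, using $R'(\beta)\ne0$ and the $(\rho r)^{-d/2-1}$ terms carefully.

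The main obstacle is uniformity: proving a lower bound that is uniform across the transition $|\Theta_d|\approx\beta$, and also uniform in small $r$ within case (3). The averaging over $\rho$ must dominate all cross and error terms with constants independent of $\Theta$. I would first establish a uniform Fresnel asymptotic lemma with explicit remainder, then a lemma saying $\frac1A\int_A^{2A}|\sum_k a_k(\rho)e^{2\pi i\rho\phi_k}|^2\,d\rho\geqslant c\sum|a_k|^2$ when the $a_k$ vary slowly and the phases are separated. Finally, $\varepsilon$ is chosen small enough to glue the regimes together.
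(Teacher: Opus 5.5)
Your architecture matches the paper's: slicing in $z$, Bessel asymptotics of the $(d-1)$-dimensional disc, uniform stationary phase with a Fresnel tail in (1)--(2), endpoint integration by parts in (3), and averaging in $\rho$. (Minor slip: the disc transform involves $J_{\frac{d-1}{2}}$ with phase shift $d\pi/4$, not $J_{d/2}$; your amplitude $R^{\frac{d-2}{2}}(\rho r)^{-d/2}$ is nevertheless right.)

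In (1)--(2), your bounded-variation/separated-phase averaging is a workable substitute for the paper's two devices. The paper bounds $\cos^2$ pointwise on half-periods using $\vartheta\in[\frac{\pi}{4}-\delta,\frac{\pi}{4}+\delta]$, and uses $\mathfrak{F}(x)=\frac12 e^{ix^2}(g(x^2)+if(x^2))$ with $f,g>0$. For your version to work, absorb the factor $e^{i\rho u_*^2}$ inside $\mathfrak{F}(\rho^{1/2}u_*)$ into the phase: it is not slowly varying, and absorbing it turns $h(\Theta_d)$ into $h(\beta_*)$. Also say explicitly why $\varepsilon$ must be small. Once $\rho^{1/2}u_*$ is large, the $O(\rho^{-\frac{d+2}{2}})$ remainder is only $O(u_*)=O(\varepsilon)$ relative to the main term.

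The genuine gap is case (3) when $A|\Theta'|$ is large and $|\Theta'|$ is not small. There your main term comes from integrating by parts only against $e^{-2\pi i\rho\Theta_d z}$, i.e. the flat-disc product $\frac{\sin(2\pi\rho\Theta_d\beta)}{\pi\rho\Theta_d}\widehat{\chi_{B_{R(\beta)}}}(\rho r)$. As you noticed, each further integration by parts gains only the factor $r\beta/(|\Theta_d|\sqrt{1-\beta^2})$, which tends to $1$ as $|\Theta_d|\to\beta$. So the discarded terms are as large as what you keep, and your lower bound (``$\sin^2$ averages to $1/2$, the disc factor to $(\rho r)^{-d}$'') is applied to the wrong main term. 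The ``exact $\partial_z$ expansion'' you defer is precisely the missing step.

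The paper's fix (point (5) of Theorem~\ref{thm:1}) inserts the Bessel asymptotics first. It then integrates by parts once against the full phase $h(x)=-2\pi(x\Theta_d+|\Theta'|R(x))$, using $|h'|\geqslant 2\pi\varepsilon$ on $[-\beta,\beta]$. This gives two edge terms with phases $\rho h(\mp\beta)$ and amplitudes $\frac{c}{\rho|\rho\Theta'|^{d/2}}\bigl(\Theta_d\pm|\Theta'|\beta/\sqrt{1-\beta^2}\bigr)^{-1}$. These no longer factor as a sine times the disc transform; formally, summing all your iterated boundary terms gives the same denominators.

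The lower bound then needs an explicit diagonal versus cross-term computation. The cross terms oscillate with frequencies $4\pi|\Theta'|R(\beta)$ and $4\pi\beta|\Theta_d|$, so they are negligible only when $A|\Theta'|>L$ with $L$ a large constant. This is why the paper splits case (3) according to $4\pi AR(\beta)|\Theta'|<K$, between $K$ and $L$, or $>L$.

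There is a further trap for your separated-phase lemma. One of $h(\beta),h(-\beta)$, namely $-2\pi(|\Theta'|R(\beta)-\beta|\Theta_d|)$, can vanish inside case (3), so the lemma does not apply verbatim. Take the bound from the other edge term instead. Its phase $-2\pi(|\Theta'|R(\beta)+\beta|\Theta_d|)$ is bounded away from $0$, and its amplitude factor $(|\Theta_d|-|\Theta'|\beta/\sqrt{1-\beta^2})^{-1}$ is at least $1$.

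Finally, in the middle range $K\leqslant 4\pi AR(\beta)|\Theta'|\leqslant L$ the disc factor is not ``trivially of order one'', since $J_{\frac{d-1}{2}}$ has zeros there. You need a real averaging step. The paper bounds $\frac{2}{z}\int_{z/2}^{z}u\,J_{\frac{d-1}{2}}(u)^2\,du$ from below for $z\in[K,L]$ and discards the small set where $|\sin(2\pi\rho\Theta_d\beta)|$ is small. Alternatively, make your fast/slow decoupling explicit: the sine oscillates on scale $1$ while the Bessel factor varies on scale $A$.
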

We postpone the proof of this theorem to Section \ref{sect:proof}. From these estimates we deduce the following lower bound for the discrepancy.

\begin{cor}
Let $\mathcal{B}_0=t_0\mathcal{B}$, where $t_0$ is sufficiently small so  that $\mathcal{B}_0\subseteq\left[-\frac12,\frac12\right)^d$. Then
there exists a positive constant $c$ such that for every sufficiently large integer $N$ and for every finite
sequence of points $z_{1},z_{2},\ldots,z_{N}\in\mathbb{T}^{d}$ we have
\[
\int_{0}^{1}\int_{\mathbb{T}^{d}}\left|\sum_{j=1}^{N}\chi_{-x+t\mathcal{B}_0}\left(z_{j}\right)-Nt^{d}\left|\mathcal{B}_0\right|\right|^{2}dxdt\geqslant c N^{1-\frac{d+1}{d^2+d-1}}.
\]
\end{cor}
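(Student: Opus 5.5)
The plan is to verify the hypothesis \eqref{eq:ipotesi thm1} of Theorem~\ref{thm:main} for $\Omega=\mathcal{B}_0$ with $v=e_d$, using the estimates of Theorem~\ref{thm:0}, and then apply Theorem~\ref{thm:main} directly.

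The first step is to pass from $\mathcal{B}$ to $\mathcal{B}_0$. Since $\widehat{\chi_{t_0\mathcal{B}}}(\xi)=t_0^d\,\widehat{\chi_{\mathcal{B}}}(t_0\xi)$, the substitution $\rho\mapsto t_0\rho$ gives
\[
\frac1A\int_A^{2A}|\widehat{\chi_{\mathcal{B}_0}}(\rho\Theta)|^2d\rho=t_0^{2d}\,\frac{1}{t_0A}\int_{t_0A}^{2t_0A}|\widehat{\chi_{\mathcal{B}}}(\rho\Theta)|^2d\rho .
\]
So the lower bounds of Theorem~\ref{thm:0}, applied at $t_0A$, transfer to $\mathcal{B}_0$ with different constants, provided $A$ is large.

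Next, choose $\beta_1=\beta$. In the region $|\Theta_d|\le\beta$, part (1) of Theorem~\ref{thm:0} gives exactly the bound $\kappa_1A^{-d-1}$. In the complementary region $|\Theta_d|>\beta$ we only need the weaker bound $\kappa_1A^{-d-2}$, and the remaining two parts of Theorem~\ref{thm:0} cover it.

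For $\beta<|\Theta_d|\le\beta+\varepsilon$, part (2) gives at least
\[
\frac{c}{A^{d+1}}\cdot\frac{1}{1+A\varepsilon^2}\gtrsim\frac{1}{A^{d+2}},
\]
since $(A^{1/2}(|\Theta_d|-\beta))^2\le A\varepsilon^2$.

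For $|\Theta_d|>\beta+\varepsilon$, part (3) gives at least
\[
\frac{c}{A^2}\cdot\frac{1}{1+(A|\Theta'|)^d}\ge\frac{c}{A^2(1+A^d)}\gtrsim\frac{1}{A^{d+2}},
\]
since $|\Theta'|\le1$.

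Thus \eqref{eq:ipotesi thm1} holds for all $\Theta$ once $A$ is large, with $\kappa_1$ the minimum of the constants obtained above. Also $\mathcal{B}_0\subseteq[-\frac12,\frac12)^d$ by the choice of $t_0$, so Theorem~\ref{thm:main} applies and yields the stated bound.

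There is no real obstacle here, since all the analytic work is contained in Theorem~\ref{thm:0}. The only points needing care are checking that the intermediate regime (2) does not fall below $A^{-d-2}$, which uses $|\Theta_d|-\beta\le\varepsilon$ together with $\varepsilon$ fixed, and tracking the dilation constant $t_0$.
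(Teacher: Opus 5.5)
Your proposal is correct and follows the same route as the paper. You transfer the lower bounds of Theorem~\ref{thm:0} to $\mathcal{B}_0$ by scaling, check that they give hypothesis \eqref{eq:ipotesi thm1} with $v=e_d$ and $\beta_1=\beta$ (the paper leaves this to the reader, while you write out the scaling identity and the region-by-region check), and then apply Theorem~\ref{thm:main}.
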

\begin{proof}
    We first observe that $\mathcal{B}_0$ satisfies the same estimates as in Theorem \ref{thm:0}. These estimates, in turn, readily imply assumption \eqref{eq:ipotesi thm1} and hence the estimate for the discrepancy given by Theorem \ref{thm:main}.
\end{proof}

\begin{rem}
  As already observed, the exponent
\[
1-\frac{d+1}{d^2+d-1}
\]
lies between the exponent \(1-\frac{1}{d-1}\), associated with cylinders,
and the exponent \(1-\frac{1}{d}\), associated with balls. The distinction
between these two classical regimes is essentially governed by curvature:
all principal curvatures are non-zero for the ball, while cylinders have
flat directions. The barrel combines curvature and singularities, since it
has curved regions with non-vanishing principal curvatures as well as
edges. The presence of these edges therefore interacts with curvature and
leads to a further intermediate regime. This illustrates once again that
higher-dimensional non-smooth convex bodies may exhibit subtle discrepancy
phenomena that are still only partially explored.
\end{rem}

In Section \ref{sect:proof}, we also establish the following pointwise upper bounds for $\widehat{\chi_\mathcal{B}}$.
\begin{thm}
\label{thm:above}There exist positive constants $\varepsilon$ and $c$
such that for $\rho$ sufficiently large we have the following estimates.
\begin{enumerate}
\item 
Let $\left|\Theta_{d}\right|\leqslant\beta+\varepsilon$, then
\[
\left|\widehat{\chi_{\mathcal{B}}}\left(\rho\Theta\right)\right|^{2}\leqslant\frac{c}{\rho^{d+1}}.
\]
\item Let $\beta+\varepsilon<\left|\Theta_{d}\right|\leqslant1$, then
\[
\left|\widehat{\chi_{\mathcal{B}}}\left(\rho\Theta\right)\right|^{2}\leqslant\frac{c}{\rho^{2}}\frac{1}{1+\left(\rho\left|\Theta'\right|\right)^{d}}.
\]
\end{enumerate}
\end{thm}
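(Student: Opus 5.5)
We need upper bounds for $\widehat{\chi_{\mathcal B}}(\rho\Theta)$. Since $\mathcal B$ is rotationally symmetric in $x$, we may take $\Theta=(|\Theta'|e_1,\Theta_d)$ and write $\xi=\rho\Theta=(\xi',\xi_d)$. The plan is to use two complementary representations of the Fourier transform. The first slices perpendicular to the $z$-axis:
\[
\widehat{\chi_{\mathcal B}}(\xi)=\int_{-\beta}^{\beta}e^{-2\pi i \xi_d z}\,\widehat{\chi_{B(r(z))}}(\xi')\,dz,\qquad r(z)=\sqrt{1-z^2}-\alpha ,
\]
where $\widehat{\chi_{B(r)}}(\xi')=r^{d-1}|r\xi'|^{-(d-1)/2}J_{(d-1)/2}(2\pi r|\xi'|)$ (normalised) is the ball transform in $\mathbb R^{d-1}$. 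The second uses the divergence theorem, writing $\widehat{\chi_{\mathcal B}}(\xi)=\frac{-1}{2\pi i|\xi|^2}\int_{\partial\mathcal B}e^{-2\pi i\xi\cdot x}\,\xi\cdot n(x)\,d\sigma(x)$, and splits the boundary into the two flat discs $\{z=\pm\beta,\ |x|\le \sqrt{1-\beta^2}-\alpha\}$ and the curved lateral part.

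For part (2), where $|\Theta_d|>\beta+\varepsilon$, the direction is close to the axis and away from all normals of the lateral surface. Those normals are $(\cos\phi\,\hat x,\sin\phi)$ with $|\sin\phi|\le\beta$. Hence the lateral surface has no stationary points, and integration by parts in $z$ in the slice formula should give only boundary terms at $z=\pm\beta$. Concretely, I would integrate by parts once in $z$ using $e^{-2\pi i\xi_d z}$; since $|\xi_d|\approx\rho$, this produces
\[
\frac{1}{\rho}\left|\widehat{\chi_{B(r(\pm\beta))}}(\xi')\right|\lesssim\frac1\rho\,(1+|\xi'|)^{-d/2},
\]
plus a remainder $\rho^{-1}\int|\partial_z\widehat{\chi_{B(r(z))}}(\xi')|\,dz$. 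The boundary term is exactly the flat-disc contribution, with $|\xi'|=\rho|\Theta'|$, and squaring it gives the claimed bound. The remainder is not small enough as written. Since $\partial_z$ of the ball transform is of size $|\xi'|\,(1+|\xi'|)^{-d/2}$, I would instead integrate by parts repeatedly, or equivalently treat the remainder as an oscillatory integral with phase $\xi_d z\pm r(z)|\xi'|$ via the Bessel asymptotics. This phase has derivative $\ge c\rho$ because $|r'(z)|\,|\xi'|\le \frac{\beta}{\sqrt{1-\beta^2}}\,\rho|\Theta'|$, which is less than $|\xi_d|-c\rho$ when $|\Theta_d|>\beta+\varepsilon$; this works provided $\varepsilon$ and the geometry are compatible, using the stated constraint $\beta<\sqrt{1-\alpha^2}$. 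Each further integration by parts then gains $\rho^{-1}$ against the $|\xi'|$ loss, so the remainders are $O(\rho^{-2}(1+\rho|\Theta'|)^{-(d-1)/2})$, which is dominated by the main term. For small $|\xi'|$ I would use the non-asymptotic smooth bound for the Bessel function instead.

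For part (1), where $|\Theta_d|\le\beta+\varepsilon$, we have $|\xi'|\gtrsim\rho$, and the Bessel asymptotics give $\widehat{\chi_{B(r)}}(\xi')=c\,r^{(d-2)/2}|\xi'|^{-d/2}\cos(2\pi r|\xi'|-\tfrac{d\pi}{4})+O(|\xi'|^{-d/2-1})$. The $z$-integral is then a one-dimensional oscillatory integral with phases $\xi_d z\pm r(z)|\xi'|$ over $[-\beta,\beta]$. The phase is stationary where $\Theta$ is normal to the lateral surface, with nondegenerate second derivative $\approx\rho$ since $r''<0$, so van der Corput with $k=2$ gives an $O(\rho^{-1/2})$ bound. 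This bound is uniform even when the stationary point reaches the endpoint $\pm\beta$; that transitional regime is exactly why the range extends to $\beta+\varepsilon$, and uniformity is automatic with van der Corput. The total is therefore $|\widehat{\chi_{\mathcal B}}|\lesssim\rho^{-d/2}\rho^{-1/2}=\rho^{-(d+1)/2}$, as required. The error term contributes $O(\rho^{-d/2-1})$, which is better.

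The main obstacle is making the Bessel remainder and the integration by parts in part (2) uniform as $|\xi'|$ ranges from $0$ up to about $\rho$. I would handle this by splitting into $|\xi'|\le1$ and $|\xi'|\ge1$ and using the representation $J_\nu(s)=s^{-1/2}\bigl(e^{is}a_+(s)+e^{-is}a_-(s)\bigr)$ with symbol-type $a_\pm$. This lets every integration by parts be carried out on a genuine non-stationary phase whose amplitudes have controlled derivatives.
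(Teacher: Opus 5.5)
Your proposal is correct. It differs from the paper mainly in part (1).

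The paper never estimates $\widehat{\chi_{\mathcal B}}$ from above directly. It reads Theorem \ref{thm:above} off the asymptotic expansions of Theorem \ref{thm:1}. Points (1)--(2) of that theorem rest on a uniform stationary-phase analysis (Propositions \ref{prop:Fase staz} and \ref{prop:tau fuori}), with the Fresnel tail $\mathfrak F$ tracking the stationary point as it crosses the endpoint $\pm\beta$.

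You replace all of this with van der Corput's lemma, using $|\phi''|=2\pi|\xi'||r''|\geqslant 2\pi|\xi'|\gtrsim\rho$. This bound is uniform automatically. It works for every $\Theta$ with $|\Theta'|$ bounded below, so the cutoff $\beta+\varepsilon$ plays no role in your part (1). The Bessel remainder can be bounded crudely, since only an upper bound is needed.

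Your part (2) is essentially the paper's argument. The flat discs give the boundary terms. The phase $\xi_d z\mp r(z)|\xi'|$ is non-stationary with $|\phi'|\geqslant c_\varepsilon\rho$, exactly as in Proposition \ref{prop:Due cos}. Small-argument Bessel bounds handle bounded $|\xi'|$, in the spirit of Proposition \ref{lem:4}.

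The trade-off: your route is shorter and more robust for upper bounds. The paper needs the precise leading terms anyway for the lower bounds of Theorem \ref{thm:0}, so for it the upper bounds come at no extra cost.

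Two small corrections.

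\emph{The remainder exponent.} After your first integration by parts against $e^{-2\pi i\xi_d z}$ and one more against the full phase, the remainder is $O(\rho^{-2}|\xi'|^{-(d-2)/2})$, not $O(\rho^{-2}|\xi'|^{-(d-1)/2})$. It is still $\lesssim\rho^{-1}|\xi'|^{-d/2}$ because $|\xi'|\leqslant\rho$. However, it is only comparable to the boundary term when $|\Theta'|$ is of order one, not dominated by it. This is why the paper's coefficients in Proposition \ref{prop:Due cos} are $(\Theta_d\pm|\Theta'|\beta/\sqrt{1-\beta^2})^{-1}$ rather than $\Theta_d^{-1}$. You could also skip the first step and integrate by parts once directly with the full phase.

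\emph{The source of non-stationarity.} It holds for every $\varepsilon>0$, because $t\mapsto t-\frac{\beta}{\sqrt{1-\beta^2}}\sqrt{1-t^2}$ is increasing and vanishes at $t=\beta$. The constraint $\beta<\sqrt{1-\alpha^2}$ is not what drives it. That constraint only ensures $r(\pm\beta)>0$, so the Bessel arguments stay $\gtrsim|\xi'|$ and the amplitudes stay smooth.
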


Due to the slow decay in the directions where $|\Theta_d|=1$, we cannot  apply Theorem \ref{thm:main2}. As anticipated in the introduction, we overcome this issue by averaging $\left|\widehat{\chi_{\mathcal{B}}}\left(\rho\Theta\right)\right|^{2}$ over a small family of rotations. We start with a lemma. 

\begin{lem}
\label{lem:rotaz}Let $\Omega\subset\mathbb{R}^{d}$ be a bounded
measurable set. Assume that there exists $\delta>0$ such that, for
all $\rho>1$ and $\left|\Theta\right|=1$,
\begin{align*}
\left|\widehat{\chi_{\Omega}}\left(\rho\Theta\right)\right|^{2} & \leqslant\begin{cases}
\frac{c}{\rho^{2}}\frac{1}{1+\left|\rho\Theta'\right|^{d}} & \text{if }\left|\Theta'\right|<\delta,\\
\frac{c}{\rho^{d+1}} & \text{if }\left|\Theta'\right|\geqslant\delta.
\end{cases}
\end{align*}
Then, there exists a symmetric neighborhood $U$ of the identity in
$SO\left(d\right)$ such that
\begin{equation}
\int_{U}\left|\widehat{\chi_{\sigma\Omega}}\left(\rho\Theta\right)\right|^{2}d\sigma\leqslant\begin{cases}
\frac{c}{\rho^{d+1}} & \left|\Theta'\right|\leqslant\delta/4,\\
\frac{c}{\rho^{d+2}} & \delta/4<\left|\Theta'\right|\leqslant\delta/2,\\
\frac{c}{\rho^{d+1}} & \left|\Theta'\right|>\delta/2.
\end{cases}\label{eq:tesi lemma}
\end{equation}
\end{lem}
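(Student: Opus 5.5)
The plan is to use the identity $\widehat{\chi_{\sigma\Omega}}(\xi)=\widehat{\chi_{\Omega}}(\sigma^{-1}\xi)$, valid since $\sigma$ is orthogonal with determinant one. Then
\[
\int_U\left|\widehat{\chi_{\sigma\Omega}}(\rho\Theta)\right|^2d\sigma=\int_U\left|\widehat{\chi_{\Omega}}(\rho\,\sigma^{-1}\Theta)\right|^2d\sigma ,
\]
and everything reduces to understanding how the direction $\sigma^{-1}\Theta$ is distributed when $\sigma$ ranges over $U$. I choose $U$ symmetric, of the form $\{\sigma:\|\sigma-I\|<\eta\}$, with $\eta$ small compared with $\delta$; say $\eta\leqslant\delta/8$, so that $\bigl||(\sigma^{-1}\Theta)'|-|\Theta'|\bigr|\leqslant|\sigma^{-1}\Theta-\Theta|<\delta/8$.

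\emph{The two outer ranges.} If $|\Theta'|>\delta/2$, then $|(\sigma^{-1}\Theta)'|>3\delta/8$ for all $\sigma\in U$. Only the two branches of the hypothesis can then occur. The second branch gives $c\rho^{-d-1}$ directly. The first branch, when $3\delta/8<|(\sigma^{-1}\Theta)'|<\delta$, gives
\[
\frac{c}{\rho^{2}}\,\frac{1}{1+(\rho\,3\delta/8)^d}\lesssim \rho^{-d-2}\leqslant\rho^{-d-1}.
\]
Integrating over $U$, which has finite measure, yields the third line of \eqref{eq:tesi lemma}.

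For $|\Theta'|\leqslant\delta/4$ I only need a bound $c\rho^{-d-1}$, which is the main place where averaging is required. Here $|(\sigma^{-1}\Theta)'|<3\delta/8<\delta$, so the first branch applies:
\[
\int_U\frac{c\rho^{-2}}{1+|\rho(\sigma^{-1}\Theta)'|^d}\,d\sigma .
\]
The map $\sigma\mapsto\sigma^{-1}\Theta$ pushes the Haar measure restricted to $U$ forward to a measure on $\Sigma_{d-1}$ with bounded density with respect to surface measure (by invariance, the full Haar measure pushes forward to normalized surface measure). So the integral is bounded by
\[
c\rho^{-2}\int_{\Sigma_{d-1}}\frac{d\omega}{1+\rho^d|\omega'|^d}.
\]
Near the poles $\omega=\pm e_d$, the variable $\omega'$ serves as a coordinate in $\mathbb R^{d-1}$ with bounded Jacobian, and the integral becomes
\[
\rho^{-2}\int_{\mathbb R^{d-1}}\frac{du}{1+\rho^d|u|^d}=\rho^{-2}\rho^{-(d-1)}\int\frac{dw}{1+|w|^d}.
\]
The last integral converges since $d>d-1$, giving $c\rho^{-d-1}$. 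Away from the poles the integrand is $O(\rho^{-d-2})$.

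\emph{The middle range} $\delta/4<|\Theta'|\leqslant\delta/2$. This is where the improved bound $\rho^{-d-2}$ must come out, and the choice of $\eta$ is the crux. With $\eta<\delta/8$ we have $\delta/8<|(\sigma^{-1}\Theta)'|<5\delta/8<\delta$, so the first branch applies pointwise with $|\rho(\sigma^{-1}\Theta)'|\geqslant\rho\delta/8$. This gives
\[
\rho^{-2}(\rho\delta)^{-d}\lesssim\rho^{-d-2}
\]
uniformly in $\sigma$, without even using averaging. Integrating over $U$ concludes the middle line of \eqref{eq:tesi lemma}.

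The only genuinely delicate point is the bounded-density claim for the pushforward of Haar measure on $U$. I would justify it from the fact that the full Haar measure pushes forward to normalized surface measure, and the restriction to $U$ is dominated by the full measure. Constants then depend only on $\delta$, $d$ and $c$.
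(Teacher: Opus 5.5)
Your proposal is correct and follows essentially the same route as the paper: the same choice of $U$ with a $\delta/8$ margin, the same three-way case split with pointwise bounds in the middle and outer ranges, and, for $|\Theta'|\leqslant\delta/4$, the same step of enlarging $U$ to $SO(d)$ and pushing Haar measure forward to normalized surface measure on $\Sigma_{d-1}$. The only cosmetic difference is that you evaluate the resulting spherical integral in graph coordinates $\omega'$ near the poles, whereas the paper uses the polar angle $\phi$; both give $c\rho^{-(d-1)}$ and hence the bound $c\rho^{-d-1}$.
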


\begin{proof}
Let $U$ be a symmetric neighborhood of the identity of $SO\left(d\right)$,
sufficiently small so that for all $\sigma\in U$ and for all $|\Theta|=1$
we have $|(\sigma^{-1}\Theta)'-\Theta'|<\frac{\delta}{8}$. This choice
implies that for $\sigma\in U$ and $|\Theta'|\leqslant\delta/2$ we
have 
\[
|(\sigma^{-1}\Theta)'|\leqslant|\Theta'|+|(\sigma^{-1}\Theta)'-\Theta'|<\frac{\delta}{2}+\frac{\delta}{8}<\delta
\]
and for $\sigma\in U$ and $\left|\Theta'\right|\geqslant\delta/4$
we have
\begin{equation}
\left|(\sigma^{-1}\Theta)'\right|\geqslant|\Theta'|-|(\sigma^{-1}\Theta)'-\Theta'|>\frac{\delta}{4}-\frac{\delta}{8}=\frac{\delta}{8}.\label{eq:delta ottavi}
\end{equation}
Let $\left|\Theta'\right|\leqslant\delta/4$ and let $\sigma_{\Theta}\in SO\left(d\right)$
be such that $\sigma_{\Theta}e_{d}=\Theta$. Since
$
\widehat{\chi_{\sigma\Omega}}\left(\rho\Theta\right)=\widehat{\chi_{\Omega}}\left(\rho\sigma^{-1}\Theta\right),
$
we have
\begin{align*}
\int_{U}\left|\widehat{\chi_{\sigma\Omega}}\left(\rho\Theta\right)\right|^{2}d\sigma\leqslant & \frac{c}{\rho^{2}}\int_{SO\left(d\right)}\frac{1}{1+\rho^{d}\left|\left(\sigma^{-1}\Theta\right)'\right|^{d}}d\sigma\leqslant  \frac{c}{\rho^{2}}\int_{SO\left(d\right)}\frac{1}{1+\rho^{d}\left|\left(\sigma\sigma_{\Theta}e_{d}\right)'\right|^{d}}d\sigma\\
= & \frac{c}{\rho^{2}}\int_{SO\left(d\right)}\frac{1}{1+\rho^{d}\left|\left(\sigma e_{d}\right)'\right|^{d}}d\sigma=  \frac{c}{\rho^{2}}\int_{\Sigma_{d-1}}\frac{1}{1+\left(\rho\left|x'\right|\right)^{d}}dx,
\end{align*}
where the last identity follows from the rotational invariance of
the Haar measure, since $\sigma\mapsto\sigma e_{d}$ pushes it forward
to the surface measure on $\Sigma_{d-1}$. To integrate over $\Sigma_{d-1}$,
we write every $x\in\Sigma_{d-1}$ as 
\[
x=\left(x',x_{d}\right)=\left(\sin\left(\phi\right)\eta,\cos\left(\phi\right)\right)
\]
 with $\eta\in\Sigma_{d-2}$ and $\phi\in\left[0,\pi\right]$. The
surface measure satisfies 
\[
dx=\sin\left(\phi\right)^{d-2}d\phi d\eta.
\]
 Hence
\begin{align*}
\int_{\Sigma_{d-1}}\frac{1}{1+\left(\rho\left|x'\right|\right)^{d}}dx= & 2|\Sigma_{d-2}| \int^{\pi/2}_{0}\frac{\sin\left(\phi\right)^{d-2}}{1+\left(\rho\sin\left(\phi\right)\right)^{d}}d\phi\\
\leqslant & c\int^{1/\rho}_{0}\sin\left(\phi\right)^{d-2}d\phi+c\int^{\pi/2}_{1/\rho}\frac{\sin\left(\phi\right)^{d-2}}{\left(\rho\sin\left(\phi\right)\right)^{d}}d\phi \leqslant\frac{c}{\rho^{d-1}}
\end{align*}
which gives the first inequality in (\ref{eq:tesi lemma}). Assume now
$\frac{1}{4}\delta<\left|\Theta'\right|<\frac{1}{2}\delta$, then,
by (\ref{eq:delta ottavi}),
\begin{align*}
\int_{U}\left|\widehat{\chi_{\sigma\Omega}}\left(\rho\Theta\right)\right|^{2}d\sigma\leqslant & \frac{c}{\rho^{2}}\int_{U}\frac{1}{1+\rho^{d}\left|\left(\sigma^{-1}\Theta\right)'\right|^{d}}d\sigma\leqslant \frac{c}{\rho^{2}}\int_{SO\left(d\right)}\frac{1}{\rho^{d}\left(\delta/8\right)^{d}}d\sigma\leqslant\frac{c}{\rho^{d+2}}.
\end{align*}
Finally, let $\left|\Theta'\right|>\delta/2$ and let $\sigma\in U$.
If $\left|\left(\sigma^{-1}\Theta\right)'\right|\geqslant\delta$ then
\[
\left|\widehat{\chi_{\Omega}}\left(\rho\sigma^{-1}\Theta\right)\right|^{2}\leqslant\frac{c}{\rho^{d+1}}.
\]
If $\left|\left(\sigma^{-1}\Theta\right)'\right|<\delta$ since $\left|\left(\sigma^{-1}\Theta\right)'\right|>\delta/8$
we have
\[
\left|\widehat{\chi_{\Omega}}\left(\rho\sigma^{-1}\Theta\right)\right|^{2}\leqslant\frac{c}{\rho^{2}}\frac{1}{1+\left|\rho\left(\sigma^{-1}\Theta\right)'\right|^{d}}\leqslant\frac{c}{\rho^{2}}\frac{1}{\rho^{d}\left|\delta/8\right|^{d}}\leqslant\frac{c}{\rho^{d+2}}\leqslant\frac{c}{\rho^{d+1}}.
\]
In both cases for every $\sigma\in U$ we obtain
\[
\left|\widehat{\chi_{\Omega}}\left(\rho\sigma^{-1}\Theta\right)\right|^{2}\leqslant\frac{c}{\rho^{d+1}}
\]
and the third inequality in (\ref{eq:tesi lemma}) follows. 
\end{proof}

We are now able to identify a cone for which the average over rotations satisfies the estimates from above required by Theorem \ref{thm:main2rot}. 

\begin{lem}
\label{lem:direzione}Let $\Omega$, $U$, and $\delta$ be as in
Lemma \ref{lem:rotaz}. Let $v\in\Sigma_{d-1}$ be such that 
\[
\left|v'\right|=\frac{3}{8}\delta
\]
and $v_{d}>0$. Then 
\[
\int_{U}\left|\widehat{\chi_{\sigma\Omega}}\left(\rho\Theta\right)\right|^{2}d\sigma\leqslant\begin{cases}
\frac{c}{\rho^{d+1}} & \text{if }\left|\Theta\cdot v\right|\leqslant1-\frac{1}{128}\delta^{2},\\
\frac{c}{\rho^{d+2}} & \text{if }\left|\Theta\cdot v\right|>1-\frac{1}{128}\delta^{2}.
\end{cases}
\]
\end{lem}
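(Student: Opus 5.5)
This is a geometric reduction to Lemma \ref{lem:rotaz}. By \eqref{eq:tesi lemma}, the average $\int_U|\widehat{\chi_{\sigma\Omega}}(\rho\Theta)|^2d\sigma$ is always at most $c\rho^{-(d+1)}$, since $\rho^{-(d+2)}\leqslant\rho^{-(d+1)}$ for $\rho>1$. So the first case of the statement needs no work. It remains to show that $|\Theta\cdot v|>1-\frac{1}{128}\delta^2$ forces $\delta/4<|\Theta'|\leqslant\delta/2$, which is exactly the middle band of \eqref{eq:tesi lemma} where the bound $c\rho^{-(d+2)}$ holds.

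For the containment, I will first handle the sign of $\Theta\cdot v$. The condition on $\Theta$ and the bound \eqref{eq:tesi lemma} are both invariant under $\Theta\mapsto-\Theta$: for the first, $|\Theta'|$ is unchanged; for the second, $|\widehat{\chi_{\sigma\Omega}}(-\xi)|=|\widehat{\chi_{\sigma\Omega}}(\xi)|$ because $\chi_{\sigma\Omega}$ is real. Hence we may assume $\Theta\cdot v>1-\frac{1}{128}\delta^2$. Then
\[
|\Theta-v|^2=2-2\,\Theta\cdot v<\frac{\delta^2}{64},
\]
so $|\Theta-v|<\delta/8$. Since the projection $x\mapsto x'$ is $1$-Lipschitz, $\bigl||\Theta'|-|v'|\bigr|\leqslant|\Theta'-v'|\leqslant|\Theta-v|<\delta/8$. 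With $|v'|=\frac38\delta$ this gives
\[
\frac{\delta}{4}<|\Theta'|<\frac{\delta}{2},
\]
and the second line of \eqref{eq:tesi lemma} yields the bound $c\rho^{-(d+2)}$.

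The argument is essentially one line, and the only step needing care is the numerology. The threshold $1-\delta^2/128$ is chosen precisely so that $\sqrt{2\cdot\delta^2/128}=\delta/8$ matches the half-width of the band $(\delta/4,\delta/2)$ around $\frac38\delta$. For the lemma to be meaningful one also needs $\delta$ small, say $\delta<1$, so that $v$ exists with $v_d>0$ and $1-\delta^2/128\in(0,1)$; this is harmless, since shrinking $\delta$ only weakens the hypotheses of Lemma \ref{lem:rotaz}. The lemma then places the rotation-averaged Fourier transform in the setting of Theorem \ref{thm:main2rot}, with $\beta_2=1-\delta^2/128$ and the tilted direction $v$.
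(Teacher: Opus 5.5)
Your proposal is correct and follows the paper's proof essentially step for step. The first case comes directly from Lemma~\ref{lem:rotaz}. In the second case you reduce to $\Theta\cdot v>0$ by the sign symmetry, use $|\Theta-v|^2=2-2\,\Theta\cdot v$ to get $|\Theta-v|<\delta/8$, and conclude $\delta/4<|\Theta'|<\delta/2$, which is the intermediate band of \eqref{eq:tesi lemma}. Keeping the inequality strict, which gives $|\Theta'|>\delta/4$ and so lands exactly in the band $\delta/4<|\Theta'|\leqslant\delta/2$, is a little more careful than the paper's non-strict version.
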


\begin{proof}
First of all observe that by Lemma \ref{lem:rotaz} the estimate $\frac{c}{\rho^{d+1}}$
holds for every $\Theta\in\Sigma_{d-1}$ and in particular for $\left|\Theta\cdot v\right|\leqslant1-\frac{1}{128}\delta^{2}$.
Assume now $\left|\Theta\cdot v\right|>1-\frac{1}{128}\delta^{2}$. Since the estimates in \eqref{eq:tesi lemma} are invariant under a change
of sign of \(\Theta\), it suffices to consider the case \(\Theta\cdot v>0\).
Since
\[
\left|\Theta-v\right|^{2}=2-2\Theta\cdot v\leqslant\frac{1}{64}\delta^{2},
\]
we have
\[
\left|\Theta'\right|\leqslant\left|v'\right|+\left|\Theta'-v'\right|\leqslant\frac{3}{8}\delta+\frac{1}{8}\delta=\frac{1}{2}\delta
\]
and
\[
\left|\Theta'\right|\geqslant\left|v'\right|-\left|\Theta'-v'\right|\geqslant\frac{3}{8}\delta-\frac{1}{8}\delta=\frac{1}{4}\delta.
\]
The desired estimate now follows from Lemma \ref{lem:rotaz} in the
intermediate regime.
\end{proof}

For every $(x,\sigma)\in\mathbb{T}^{d}\times SO\left(d\right)$ 
define the  discrepancy
\[
D_{N}(x,\sigma):=\sum^{N}_{j=1}\chi_{-x+\sigma\mathcal{B}}({z}_{j})-N\left|\mathcal{B}\right|.
\]

\begin{prop}
There exist a symmetric neighborhood $U$ of the identity of $SO(d)$,
a constant $c>0$, and a diverging sequence of integers $N_{j}$ such
that for every $j$ there exist points
${z}_{1},{z}_{2},\ldots,{z}_{N_{j}}$ such that
\begin{align*}
\int_{U}\int_{\mathbb{T}^{d}}\left|D_{N_{j}}(x,\sigma)\right|^{2}dxd\sigma\leqslant cN^{1-\frac{d+1}{d^{2}+d-1}}_{j}.
\end{align*}
\end{prop}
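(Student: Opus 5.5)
The plan is to reduce the statement to Theorem \ref{thm:main2rot} via Lemmas \ref{lem:rotaz} and \ref{lem:direzione}. There is one preliminary issue: Theorem \ref{thm:main2rot} requires $\Omega\subseteq[-\frac12,\frac12)^d$, and the barrel $\mathcal B$ need not lie there. I would therefore first replace $\mathcal B$ by $\mathcal B_0=t_0\mathcal B$, with $t_0$ small enough that $\sigma\mathcal B_0\subseteq[-\frac12,\frac12)^d$ for all $\sigma$. Since $\widehat{\chi_{t_0\mathcal B}}(\xi)=t_0^d\widehat{\chi_{\mathcal B}}(t_0\xi)$, the estimates of Theorem \ref{thm:above} transfer to $\mathcal B_0$, with the same directional regions and only the constants changed. (Alternatively, note that the proof of Theorem \ref{thm:main2rot} only uses the Fourier expansion on the torus, which is valid once the set is viewed periodically; but rescaling is cleaner.) I would also read the proposition as being stated for this normalized barrel, or note that the same argument works for $\mathcal B$ viewed as a subset of the torus whenever that makes sense.

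The first step is to verify the hypothesis of Lemma \ref{lem:rotaz} with a suitable $\delta$. On the unit sphere, $|\Theta_d|>\beta+\varepsilon$ is equivalent to $|\Theta'|<\sqrt{1-(\beta+\varepsilon)^2}=:\delta$. With this $\delta$:
\begin{itemize}[label={}]
\item Theorem \ref{thm:above}(2) gives $|\widehat{\chi_{\mathcal B}}(\rho\Theta)|^2\le c\rho^{-2}(1+(\rho|\Theta'|)^d)^{-1}$ when $|\Theta'|<\delta$.
\item Theorem \ref{thm:above}(1) gives $c\rho^{-d+1}$, more precisely $c\rho^{-(d+1)}$, when $|\Theta'|\ge\delta$, that is, when $|\Theta_d|\le\beta+\varepsilon$.
\end{itemize}
These bounds are stated only for $\rho$ sufficiently large, say $\rho\ge\rho_0$, whereas Lemma \ref{lem:rotaz} asks for $\rho>1$. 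For $1<\rho<\rho_0$, however, $|\widehat{\chi_{\mathcal B}}|\le|\mathcal B|$, and both right-hand sides are bounded below by a positive constant on that range, so enlarging $c$ covers it.

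Lemma \ref{lem:rotaz} then yields a symmetric neighborhood $U$ of the identity in $SO(d)$. Applying Lemma \ref{lem:direzione} with a vector $v$ satisfying $|v'|=\frac38\delta$ and $v_d>0$, we obtain the averaged bound \eqref{eq:ipotesi thm da sopra rotaz} with $\beta_2=1-\frac{1}{128}\delta^2\in(0,1)$: the decay is $\rho^{-(d+1)}$ for $|\Theta\cdot v|\le\beta_2$ and $\rho^{-(d+2)}$ for $|\Theta\cdot v|>\beta_2$. Theorem \ref{thm:main2rot} then gives the diverging sequence $N_j$, the points $z_1,\dots,z_{N_j}$, and the bound $cN_j^{1-\frac{d+1}{d^2+d-1}}$ for $\int_U\int_{\mathbb T^d}|D_{N_j}(x,\sigma)|^2\,dx\,d\sigma$.

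The main obstacle is the bookkeeping in this chain rather than any new estimate. Three things need checking: that the small-$\rho$ range can be absorbed into the constants as described; that the neighborhood $U$ from Lemma \ref{lem:rotaz} is the one used in Theorem \ref{thm:main2rot}, which it is, since that theorem works with an arbitrary fixed $U$; and that the containment of $\sigma\Omega$ in the fundamental cube holds after the rescaling. The genuine analytic input is Theorem \ref{thm:above}, which I take as given.
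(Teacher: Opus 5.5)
Your proposal is correct and takes the same route as the paper, which simply chains Theorem \ref{thm:above}, Lemma \ref{lem:rotaz} (implicitly with $\delta=\sqrt{1-(\beta+\varepsilon)^2}$), Lemma \ref{lem:direzione} and Theorem \ref{thm:main2rot}. Your extra bookkeeping is legitimate care that the paper leaves implicit: rescaling to $t_0\mathcal B$ so that every $\sigma\mathcal B_0$ lies in the fundamental cube, and absorbing the range $1<\rho<\rho_0$ into the constant. Just fix the slip ``$c\rho^{-d+1}$'', which should read $c\rho^{-(d+1)}$.
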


\begin{proof}
The proposition follows from Theorem \ref{thm:main2rot} combining Theorem \ref{thm:above}, Lemma \ref{lem:rotaz} and Lemma \ref{lem:direzione}.
\end{proof}

\begin{rem}
By the previous proposition, for every $j$ there exists $\sigma_{j}\in SO\left(d\right)$ and points
${z}_{1}$,${z}_{2}$, $\ldots$, ${z}_{N_{j}}$ such
that
\[
\int_{\mathbb{T}^{d}}\left|\sum^{N}_{k=1}\chi_{-x+\sigma_{j}\mathcal{B}}({z}_{k})-N_{j}\left|\mathcal{B}\right|\right|^{2}dx\leqslant cN^{1-\frac{d+1}{d^{2}+d-1}}_{j}.
\]
\end{rem}

To conclude this section, we also present a result in dimension $d=2$. We show that we can get an almost optimal upper bound (up to a logarithmic factor) for the $L^2$-average of the discrepancy even without averaging over rotations, by taking a slightly tilted barrel. The proof is in the spirit of Davenport \cite{Davenport} (see also \cite[Theorem 8.24]{GT}). The same technique does not seem to extend to higher dimensions. 

\begin{prop}
    Let $\varphi$ be a sufficiently small positive irrational algebraic number of degree $2$, let  $\sigma_{\varphi}$ be a rotation of angle $\arctan \varphi$, and let $\mathcal{B}_{\varphi}=\sigma_{\varphi}\mathcal{B}$. Then there exist a positive constant $c$ and diverging sequence of integers $N_j$ such that for every $j$ there exist points $z_1,z_2,\cdots,z_{N_j}\in\mathbb{T}^2$ such that
    \[ \int_{\mathbb{T}^{2}}\left|\sum^{N_j}_{k=1}\chi_{-x+\mathcal{B}_\varphi}({z}_{k})-N_{j}\left|\mathcal{B}\right|\right|^{2}dx
    \leqslant c N_j^{2/5}\log N_j.
    \]
\end{prop}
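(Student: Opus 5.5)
The plan is a Davenport-style construction in $d=2$: a rectangular lattice whose axes are tilted by the irrational angle $\arctan\varphi$ relative to the barrel's flat direction, combined with the planar Fourier bounds of Theorem \ref{thm:above}.

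\textbf{Point set and Parseval reduction.} Take $\mathcal P_N=\bigl(\tfrac1G\mathbb Z\times\tfrac1L\mathbb Z\bigr)\cap[-\tfrac12,\tfrac12)^2$ with $G\approx N_0^{3/5}$ and $L\approx N_0^{2/5}$, the $d=2$ exponents from the proof of Theorem \ref{thm:main2}. As there, Parseval gives
\[
\int_{\mathbb T^2}|D_N(x)|^2dx=N^2\sum_{(Gn_1,Ln_2)\neq0}\bigl|\widehat{\chi_{\mathcal B}}\bigl(\sigma_\varphi^{-1}(Gn_1,Ln_2)\bigr)\bigr|^2 .
\]
Now the lattice is axis-parallel and the body is rotated, so no rational rotation is needed.

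\textbf{Splitting the frequencies.} Theorem \ref{thm:above} with $d=2$ gives $|\widehat{\chi_{\mathcal B}}(\xi)|^2\lesssim|\xi|^{-3}$ for all $\xi$. It also gives $|\widehat{\chi_{\mathcal B}}(\xi)|^2\lesssim |\xi|^{-2}(1+|\xi_1|^2)^{-1}$ when $\xi$ lies in a narrow cone around the vertical axis $e_2$, which is the direction normal to the flat faces.
\begin{itemize}
\item[(a)] Outside this cone we use the $|\xi|^{-3}$ bound. The rotated cone around $\sigma_\varphi e_2$ stays away from the lattice axis $e_2$ because $\varphi>0$, so the direction $(0,1)$ is excluded and the first-sum estimate $S_1$ from Theorem \ref{thm:main2} gives $O(N^{2/5})$.
\item[(b)] Inside the cone, $\xi=\sigma_\varphi^{-1}(Gn_1,Ln_2)$ is nearly parallel to $e_2$ and has first coordinate $\xi_1\approx (Gn_1-\varphi Ln_2)/\sqrt{1+\varphi^2}$. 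Since $\varphi$ is small and positive, this cone captures lattice points with $n_1\neq0$ and $|Gn_1|\lesssim Ln_2$.
\end{itemize}
The contribution of (b) is
\[
N^2\sum_{n_2\neq0}\ \sum_{n_1}\frac{1}{L^2n_2^2}\,\frac{1}{1+(Gn_1-\varphi Ln_2)^2}.
\]

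\textbf{Diophantine control of the cone sum (main obstacle).} Choose $G,L$ along a subsequence for which $L/G$ is tied to the convergents of $\varphi$, for instance $G=q_jM$ and $L=p_j$-related, so that $\|\varphi n L/G\|$ is controlled. Because $\varphi$ is a quadratic irrational, it is badly approximable: $|q\varphi-p|\ge c/q$. This gives a lower bound on $|Gn_1-\varphi Ln_2|$ in terms of $n_2$.

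I would first try the simplest choice, $L$ and $G$ arbitrary integers with $G/L$ of size $N^{1/5}$. For fixed $n_2$, the inner sum over $n_1$ is at most a bounded number of near-resonant terms plus $O(1/G)$. The resonant size is $1/(1+G^2\|\varphi Ln_2/G\|^2)$.

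Summing over $n_2$ with weight $n_2^{-2}$ and dyadic blocks, I would then use the Davenport-type bound $\sum_{n\le K}\|n\alpha\|^{-2}$-style estimates for badly approximable $\alpha=\varphi L/G$, which hold if $G/L$ itself is chosen along convergents of $\varphi$. The target is that each dyadic block $n_2\sim 2^k$ contributes $\lesssim G^{-2}\cdot(\text{block-size factor})$. This should give a total of order $N^2L^{-2}G^{-2}\log N$, which equals $N^{2/5}\log N$ since $N^2/(LG)^2=1$ and the counting is normalised appropriately.

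Checking the balance $G^{1}L^{0}$ against $N^2/(L^2G^2)$ and tracking the loss is where the logarithm enters. The delicate point is arranging the rational structure of $G/L$ relative to $\varphi$, so that no $n_2$ up to size $G/L$ produces $Gn_1\approx\varphi Ln_2$ too closely. I expect this to work via the three-distance/continued-fraction structure of quadratic irrationals, whose partial quotients are bounded.
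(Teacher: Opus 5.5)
Your setup is the paper's (its $L,K$ are your $G,L$): the axis-parallel lattice $\frac1G\mathbb Z\times\frac1L\mathbb Z$ with $G/L\approx N^{1/5}$, Parseval, the off-cone frequencies handled by the $S_1$ estimate ($\lesssim L$), and the cone part
\[
G^2\sum_{n_2\neq0}\frac{1}{n_2^2}\sum_{n_1}\frac{1}{\bigl(1+|Gn_1+\varphi Ln_2|\bigr)^2}.
\]
However, bounding this sum is the entire content of the proposition. You leave it as an expectation, and the arithmetic condition you propose for it does not supply what is needed.

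\textbf{The missing idea.} You need to transfer $\|n\varphi\|>H/|n|$ to $x=\varphi L/G$ with a constant of the right size.

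Tying $G/L$ to the convergents of $\varphi$ cannot be the mechanism: $G/L\to\infty$, while the convergents of $\varphi$ approximate a fixed small number. For arbitrary integers $G,L$, the only general bound is $\|n\varphi L/G\|>H/(GL|n|)$. This does not exclude a resonance $\|n_2x\|<1/G$ already at $|n_2|\approx G/(\varphi L)$, and that single term costs $\approx\varphi^2L^2\gg L$.

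What is needed is that $L/G$ have bounded numerator. The paper takes $q:=G/L\in\mathbb N$, so that $\|n_2\varphi/q\|\ge\|n_2\varphi\|/q>H/(q|n_2|)$. Then, for $|n_2|\in[2^{k-1},2^k)$, every interval of length $H/(q2^{k-1})$ contains at most two of the numbers $\|n_2\varphi/q\|$. This is a pigeonhole argument using $\|(n_2-n_2')\varphi/q\|>H/(q2^{k-1})$. Consequently:
\begin{itemize}
\item terms with $\|n_2\varphi/q\|<1/G$ occur only for $|n_2|>HL$, and contribute $\lesssim q^2$ in total;
\item the remaining terms contribute $\lesssim q^2$ per dyadic block up to $2^k\approx HL$, and $\lesssim q^2HL\,2^{-k}$ beyond, i.e.\ $\lesssim q^2\log L$ in total.
\end{itemize}
Together with the off-cone part this gives $\lesssim L+q^2\log L$. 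The balance $q^2=L$ then forces $L=j^2$, $G=j^3$, $N_j=j^5$.

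\textbf{Bookkeeping.} Two steps in your count would also have gone wrong.
\begin{itemize}
\item For fixed $n_2$, the non-nearest $n_1$ contribute $O(G^{-2})$, not $O(G^{-1})$. With $O(G^{-1})$ the cone sum is already of order $G\approx N^{3/5}$.
\item The tally ``$N^2L^{-2}G^{-2}\log N=N^{2/5}\log N$'' is inconsistent, since $N=GL$ makes the left side equal to $\log N$. The correct cone bound is $(G/L)^2\log L$, and it is the balance of this against $L$ that fixes the exponents $3/5$ and $2/5$.
\end{itemize}

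\textbf{Geometry in (a).} As written, the justification is reversed. For small $\varphi$ the rotated slow cone \emph{contains} $e_2$, so the points $(0,Ln_2)$ fall in (b). There the fact that $|\xi_1|\approx\varphi L|n_2|$ is what tames them; this is the whole point of the tilt. What the $S_1$ estimate needs is that the \emph{complement} of that cone lies in $\{|Ln_2|\le\eta|Gn_1|\}$. This holds once $\varphi$ is small compared with the cone's aperture.
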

\begin{proof}
Since $\varphi$ is an irrational algebraic number of degree $2$ by \cite[Theorem 5.6]{GT} there exists $H>0$ such that for every integer $n\neq0$
\begin{align}
\label{eq:irr_alg}
\left\Vert n\varphi\right\Vert >\frac{H}{|n|},    
\end{align}
where $\left\Vert x\right\Vert =\min\left(\left\{ x\right\} ,1-\left\{ x\right\} \right)$
denotes the distance from the closest integer. Let $e_{\varphi}=\left(1,\varphi\right)$, and observe that $\sigma_{\varphi}$  maps $e_{1}$ to $\frac{e_{\varphi}}{\left|e_{\varphi}\right|}$. Let $e_\varphi^\perp=(-\varphi,1)$. Then by Theorem \ref{thm:above} there exists $\beta'$ such that
\[
\left|\widehat{\chi_{\mathcal{B}_{\varphi}}}\left(m\right)\right|^{2}=\left|\widehat{\chi_{\mathcal{B}}}\left(\sigma^{-1}_{\varphi}m\right)\right|^{2}\leqslant c\begin{cases}
\frac{1}{\left|m\right|^{3}} & \left|m\cdot e^{\perp}_{\varphi} \right|\leqslant\beta'\left|m\right|,\\
\frac{1}{\left|m\right|^{2}\left(1+\left|m\cdot e_{\varphi} \right|\right)^{2}} & \left|m\cdot e^{\perp}_{\varphi} \right|>\beta'\left|m\right|.
\end{cases}
\]
If $\varphi$ is sufficiently small, the above estimate implies
\[
\left|\widehat{\chi_{\mathcal{B}_{\varphi}}}\left(m\right)\right|^{2}\leqslant c\begin{cases}
\frac{1}{\left|m\right|^{3}} & \left|m_{2}\right|\leqslant\gamma\left|m_{1}\right|,\\
\frac{1}{\left|m\right|^{2}\left(1+\left| m\cdot e_{\varphi}\right|\right)^{2}} & \left|m_{2}\right|>\gamma\left|m_{1}\right|.
\end{cases}
\]
where $\gamma=\frac{\varphi+\beta'}{1-\beta'}$. Indeed, assume $\left|m_{2}\right|\leqslant\frac{\varphi+\beta'}{1-\beta'}\left|m_{1}\right|$.
If $\left|m\cdot e^{\perp}_{\varphi} \right|\leqslant\beta'\left|m\right|$
there is nothing to prove. If $\left| m\cdot e^{\perp}_{\varphi} \right|>\beta'\left|m\right|$,
since 
\[
\left| m\cdot e_{\varphi}\right|=\left|m_{1}+\varphi m_{2}\right|\geqslant\left|m_{1}\right|-\varphi\left|m_{2}\right|\geqslant\left(1-\varphi\frac{\varphi+\beta'}{1-\beta'}\right)\left|m_{1}\right|\approx\left|m\right|,
\]
we have
\[
\left|\widehat{\chi_{\mathcal{B}_{\varphi}}}\left(m\right)\right|^{2}\leqslant\frac{c}{\left|m\right|^{4}}\leqslant\frac{c}{\left|m\right|^{3}}.
\]
Finally assume $\left|m_{2}\right|>\frac{\varphi+\beta'}{1-\beta'}\left|m_{1}\right|$,
then
\begin{align*}
\left| m\cdot e^{\perp}_{\varphi} \right| & =\left|\varphi m_{1}-m_{2}\right|\geqslant\left|m_{2}\right|-\varphi\left|m_{1}\right|\geqslant\left|m_{2}\right|-\frac{\varphi\left(1-\beta'\right)}{\varphi+\beta'}\left|m_{2}\right|\geqslant\beta'\left|m\right|.
\end{align*}
Now let $L,K\in\mathbb{N}$ be such that $L/K\in\mathbb{N}$ and assume $L\gg K$.
Let us consider the points
\[
z_{\ell,k}=\left(\frac{\ell}{L},\frac{k}{K}\right)
\]
for $\ell=0,\ldots,L-1$ and $k=0,\ldots,K-1$. Then, arguing as in the proof of Theorem $\ref{thm:main2}$, 
\begin{align*}
\int_{\mathbb{T}^2} \left| D_{N_j}(x)\right|^2dx = & \sum_{m\neq0}\left|\sum^{L-1}_{\ell=0}\sum^{K-1}_{k=0}e^{2\pi im\cdot z_{\ell,k}}\right|^{2}\left|\widehat{\chi_{\mathcal{B}_{\varphi}}}\left(m\right)\right|^{2}=  L^{2}K^{2}\sum_{m\neq0}\left|\widehat{\chi_{\mathcal{B}_{\varphi}}}\left(Lm_{1},Km_{2}\right)\right|^{2}\\
\leqslant & L^{2}K^{2}\sum_{m\neq0,\left|Km_{2}\right|\leqslant\gamma\left|Lm_{1}\right|}\frac{c}{\left|Lm_{1}\right|^{3}}\\
 & +L^{2}K^{2}\sum_{m\neq0,\left|Km_{2}\right|>\gamma\left|Lm_{1}\right|}\frac{c}{\left|Km_{2}\right|^{2}\left(1+\left|\left(Lm_{1},Km_{2}\right)\cdot e_{\varphi} \right|\right)^{2}}\\
= & S_{1}+S_{2}.
\end{align*}
For the first term we have 
\begin{align*}
S_{1}= & \frac{K^{2}}{L}\sum_{\left|m_{1}\right|\geqslant 1}\sum_{\left|m_{2}\right|\leqslant\gamma\frac{L}{K}\left|m_{1}\right|}\frac{c}{\left|m_{1}\right|^{3}}
\leqslant c\frac{K^{2}}{L}\sum^{+\infty}_{\left|m_{1}\right|=1}\frac{1}{\left|m_{1}\right|^{3}}\left(1+\gamma\frac{L}{K}\left|m_{1}\right|\right)\\
\leqslant & c\left(\frac{K^{2}}{L}+K\right)=cK\left(\frac{K}{L}+1\right)\leqslant cK.
\end{align*}
To deal with the term $S_{2}$ we consider the sets
\begin{align*}
\Gamma_{1}= & \left\{ m\in\mathbb{Z}^{2}:\left|Km_{2}\right|>\gamma\left|Lm_{1}\right|\text{ and }\frac{1}{2}L<\left| \left(Lm_{1},Km_{2}\right)\cdot e_{\varphi} \right|\right\} ,\\
\Gamma_{2}= & \left\{ m\in\mathbb{Z}^{2}:\left|Km_{2}\right|>\gamma\left|Lm_{1}\right|\text{ and }1\leqslant\left| \left(Lm_{1},Km_{2}\right)\cdot e_{\varphi}\right|\leqslant\frac{1}{2}L\right\} ,\\
\Gamma_{3}= & \left\{ m\in\mathbb{Z}^{2}\setminus\left\{ 0\right\} :\left|Km_{2}\right|>\gamma\left|Lm_{1}\right|\text{ and }\left|\left(Lm_{1},Km_{2}\right)\cdot e_{\varphi} \right|<1\right\} ,
\end{align*}
and we split the sum in $S_{2}$ as follows
\begin{align*}
S_{2} \leqslant & cL^{2}K^{2}\left(\sum_{m\in\Gamma_{1}}+\sum_{m\in\Gamma_{2}}+\sum_{m\in\Gamma_{3}}\right)\frac{1}{\left|Km_{2}\right|^{2}\left(1+\left|\left(Lm_{1},Km_{2}\right)\cdot e_{\varphi} \right|\right)^{2}}\\
= & S_{21}+S_{22}+S_{23}.
\end{align*}
For the first term we have
\begin{align*}
S_{21}\leqslant & c\sum_{m\in\Gamma_{1}}\frac{1}{\left|m_{2}\right|^{2}\left| m_{1}+\varphi \frac{K}{L}m_{2} \right|^{2}}
\leqslant c\sum_{|m_{2}|\geqslant1}\sum^{+\infty}_{q=0}\frac{1}{\left|m_{2}\right|^{2}2^{2q}}\sum_{m_{1}:\left|m_{1}+\varphi\frac{K}{L}m_{2}\right|\approx2^{q}}1\\
\leqslant & c\sum_{|m_{2}|\geqslant1}\sum^{+\infty}_{q=0}\frac{1}{\left|m_{2}\right|^{2}2^{q}}\leqslant c.
\end{align*}
For the third term we have
\begin{align*}
S_{23}\leqslant & cL^{2}\sum_{m\in\Gamma_{3}}\frac{1}{\left|m_{2}\right|^{2}}
\leqslant cL^{2}\sum_{\stackrel{\left|Km_{2}\right|>\gamma\left|Lm_{1}\right|}{\left|m_{1}+\frac{K}{L}\varphi m_{2}\right|\leqslant L^{-1},m_{2}\neq0}}\frac{1}{\left|m_{2}\right|^{2}}.
\end{align*}
Observe that for every $m_{2}$ there exists at most one integer $m_{1}$
such that $\left|m_{1}+\frac{K}{L}\varphi m_{2}\right|\leqslant L^{-1}$.
Hence
\[
S_{23}\leqslant L^{2}\sum_{1\leqslant\left|m_{2}\right|<+\infty,\left\Vert \varphi\frac{K}{L}m_{2}\right\Vert \leqslant L^{-1},}\frac{1}{\left|m_{2}\right|^{2}}.
\]
By \eqref{eq:irr_alg} and the fact that $L/K\in\mathbb{Z}$ we have
\[
\frac{1}{L} \geqslant \left|m_{1}+\varphi\frac{K}{L}m_{2}\right| \geqslant
\left\Vert \varphi\frac{K}{L}m_{2}\right\Vert \geqslant\frac{K}{L}\left\Vert m_{2}\varphi\right\Vert \geqslant\frac{K}{L}\frac{H}{\left|m_{2}\right|},
\]
so that $KH<\left|m_{2}\right|$. Therefore 
\begin{align*}
S_{23}\leqslant & cL^{2}\sum_{KH\leqslant\left|m_{2}\right|,\left\Vert \varphi\frac{K}{L}m_{2}\right\Vert \leqslant L^{-1},}\frac{1}{\left|m_{2}\right|^{2}}
\leqslant cL^{2}\sum^{+\infty}_{q=\log_2(H K)}\sum_{\substack{2^{q-1}\leqslant\left|m_{2}\right|<2^{q} \\ \left\Vert \varphi\frac{K}{L}m_{2}\right\Vert \leqslant L^{-1}}} 2^{-2q}.
\end{align*}
We now observe that, for every $1\leqslant s\leqslant\frac{2^{q}}{HK}$,
each of the intervals
\[
\left[\frac{sH}{2^{q}}\frac{K}{L},\frac{(s+1)H}{2^{q}}\frac{K}{L}\right)
\]
contains at most two numbers of the form $\left\Vert \varphi\frac{K}{L}m_{2}\right\Vert $.
Indeed, arguing by contradiction, suppose that that one of these intervals contains three or more such numbers. Without loss of generality, we may assume that two of them are of the form  $\left\{ \varphi\frac{K}{L}k_{1}\right\} $
and $\left\{ \varphi\frac{K}{L}k_{2}\right\} $ with $2^{q-1}\leqslant k_{1}<k_{2}<2^{q}$.
Then
\[
\frac{H}{2^{q}}\frac{K}{L}>\left|\left\{ \varphi\frac{K}{L}k_{1}\right\} -\left\{ \varphi\frac{K}{L}k_{2}\right\} \right|=\left\Vert \varphi\frac{K}{L}\left(k_{1}-k_{2}\right)\right\Vert >\frac{K}{L}\frac{H}{k_{2}-k_{1}}>\frac{K}{L}\frac{H}{2^{q}}
\]
which is a contradiction. Therefore
\[
S_{23}\leqslant cL^{2}\sum^{+\infty}_{q=\log_2 (HK)}2^{-2q}\sum^{\frac{2^{q}}{HK}}_{s=1}1\leqslant c\frac{L^{2}}{HK}\sum^{+\infty}_{q=\log_2(HK)}2^{-q}=c\frac{L^{2}}{K^{2}}.
\]
We now consider $S_{22}$. As for $S_{23}$ we note that for every $m_2$ there exists at most one integer $m_1$ such that $\left|m_{1}+\frac{K}{L}\varphi m_{2}\right|\leqslant 1/2$. 
Also $\left|m_{1}+\frac{K}{L}\varphi m_{2}\right|=\left\Vert \frac{K}{L}\varphi m_{2}\right\Vert$. 
Hence $$S_{22}	\leqslant c\sum_{\stackrel{\left|Km_{2}\right|>\gamma\left|Lm_{1}\right|}{\left\Vert \varphi\frac{K}{L}m_{2}\right\Vert \geqslant L^{-1}}}\frac{c}{\left|m_{2}\right|^{2}\left\Vert \varphi\frac{K}{L}m_{2}\right\Vert ^{2}}\leqslant c\sum^{+\infty}_{q=0}\sum_{\substack{2^{q-1}\leqslant m_{2}<2^{q}\\
\left\Vert \varphi\frac{K}{L}m_{2}\right\Vert \geqslant L^{-1}
}
}\frac{c}{\left|m_{2}\right|^{2}\left\Vert \varphi\frac{K}{L}m_{2}\right\Vert ^{2}}.
$$
Similarly to $S_{23}$, if $2^{q-1}\leqslant m_{2}<2^{q}$, then every interval $\left[\frac{sH}{2^{q}}\frac{K}{L},\frac{(s+1)H}{2^{q}}\frac{K}{L}\right)$, $1\leqslant s<\frac{L2^{q}}{HK}$, contains at most two numbers the form $\left\Vert \varphi\frac{K}{L}m_{2}\right\Vert$. Since we require $\left\Vert \varphi\frac{K}{L}m_{2}\right\Vert \geqslant L^{-1}$ we must distinguish the cases $2^{q}\leqslant HK$ and $2^{q}>HK$.
Therefore
\begin{align*}
    S_{22}\leqslant	&c\sum_{2^{q}\leqslant HK}2^{-2q}\sum^{\frac{2^{q}L}{HK}}_{s=1}\frac{1}{\left(\frac{sH}{2^{q}}\frac{K}{L}\right)^{2}}+c\sum_{2^{q}> HK}2^{-2q}\sum^{\frac{2^{q}L}{HK}}_{s=\frac{2^{q}}{HK}}\frac{1}{\left(\frac{sH}{2^{q}}\frac{K}{L}\right)^{2}} \\
\leqslant &	c\frac{L^{2}}{K^{2}}\sum_{2^{q}\leqslant HK}\sum^{\frac{2^{q}L}{HK}}_{s=1}\frac{1}{s^{2}}+c\frac{L^{2}}{K^{2}}\sum_{2^{q}> HK}\sum^{\frac{2^{q}L}{HK}}_{s=\frac{2^{q}}{HK}}\frac{1}{s^{2}} \\
\leqslant &	c\frac{L^{2}}{K^{2}}\sum_{2^{q}\leqslant HK}1+c\frac{L^{2}}{K^{2}}\sum_{2^{q}> HK}\frac{HK}{2^{q}}\leqslant c\frac{L^{2}}{K^{2}}\log K.
\end{align*}
It follows that
\[
\int_{\mathbb{T}^2} \left| D_{N_j}(x)\right|^2dx\leqslant c\left(K+\frac{L^{2}}{K^{2}}\log K\right).
\]
For every positive integer $j$ let $L=j^3$ and $K=j^2$, so that we have $N_j=LK=j^5$ points. Observe that with this choice $\frac{L}{K}=N_j^{1/5}=j$ is an integer and $K=\frac{L^{2}}{K^{2}}$, so that
\[
\int_{\mathbb{T}^2} \left| D_{N_j}(x)\right|^2dx\leqslant c N_j^{\frac{2}{5}}\log N_j.
\]
\end{proof}

\section{Auxiliary results on oscillatory integrals}\label{sec_aux}
This section gathers a few standard estimates for oscillatory integrals that will be used to study the asymptotic behavior of $\widehat{\chi_{\mathcal B}}$. An important role will be played by the Fresnel integrals, which naturally arise after reducing the phase to a quadratic normal form in a neighborhood of a stationary point.

\subsection{Fresnel integrals}
We use the following normalization of the Fresnel integrals:
\[
S(x)=\int_{0}^{x}\sin\!\left(t^{2}\right)dt,
\qquad
C(x)=\int_{0}^{x}\cos\!\left(t^{2}\right)dt.
\]
It will also be convenient to introduce the complex Fresnel tail
\[
\mathfrak{F}(x)=\int_{x}^{+\infty} e^{it^{2}}dt.
\]
In the next lemma, we summarize some classical properties of these functions. 

\begin{lem}
\label{lem:Fresnel}
For every $x\in\left[0,+\infty\right)$ we have
\[
0\leqslant S\left(x\right)\leqslant S\left(\sqrt{\pi}\right)<1
\]
and
\[
0\leqslant C\left(x\right)\leqslant C\left(\sqrt{\frac{\pi}{2}}\right)<1.
\]
Moreover
\[
\lim_{x\to+\infty}S\left(x\right)=\lim_{x\to+\infty}C\left(x\right)=\frac{1}{2}\sqrt{\frac{\pi}{2}}
\]
and in particular
\begin{align}
\label{eq:F(0)}
\mathfrak{F}(0)=\frac{\sqrt{\pi}}{2}e^{i\pi/4}.
\end{align}

Finally, for every $x>0$ we have
\begin{align}
\mathfrak{F}\left(x\right)=-e^{ix^{2}}\frac{1}{2ix}+O\left(\frac{1}{x^{3}}\right).
\label{eq:exp F}
\end{align}
\end{lem}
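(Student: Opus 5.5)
The plan is to treat the three groups of claims separately, since they are classical: the bounds and monotonicity of $S$ and $C$, the limits at infinity together with the value of $\mathfrak{F}(0)$, and the asymptotic expansion \eqref{eq:exp F}.

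\emph{Bounds on $S$ and $C$.} Substitute $u=t^2$ to write
\[
S(x)=\int_0^{x^2}\frac{\sin u}{2\sqrt u}\,du .
\]
The integrand alternates sign on the intervals $[k\pi,(k+1)\pi]$, and the weight $1/(2\sqrt u)$ is decreasing, so the absolute values of the successive lobe integrals decrease. Consequently the partial integrals oscillate: the maxima occur at $x^2=\pi,3\pi,\dots$ and form a decreasing sequence, while the minima occur at $x^2=2\pi,4\pi,\dots$ and form an increasing sequence whose first term is positive. This gives $0\leqslant S(x)\leqslant S(\sqrt\pi)$.

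The same alternating-series argument applies to
\[
C(x)=\int_0^{x^2}\frac{\cos u}{2\sqrt u}\,du,
\]
with lobes split at $u=\pi/2+k\pi$. Here the first maximum is at $x^2=\pi/2$. For the lower bound, note that the first negative lobe is smaller in absolute value than the first positive lobe, and this comparison propagates to all later lobes. To get $S(\sqrt\pi)<1$ and $C(\sqrt{\pi/2})<1$, use the crude bounds $\sin(t^2)\leqslant t^2$ and $\cos\leqslant 1$. These give $S(\sqrt\pi)\leqslant \int_0^1 t^2\,dt+\int_1^{\sqrt\pi}1\,dt<1$, after a slightly finer estimate if needed, and $C(\sqrt{\pi/2})<\sqrt{\pi/2}\approx 1.25$. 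The latter is not sufficient, so I will refine it with $\cos(t^2)\leqslant 1-t^4/2+t^8/24$ and integrate; a routine numerical check gives approximately $0.98$.

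\emph{Limits at infinity and $\mathfrak{F}(0)$.} These follow from the standard Gaussian-type integral
\[
\int_0^\infty e^{it^2}\,dt=\frac{\sqrt\pi}{2}e^{i\pi/4}.
\]
To prove it, rotate the contour $t=e^{i\pi/4}s$ in the sector $0\leqslant\arg z\leqslant\pi/4$. The arc contribution vanishes by Jordan's lemma, since on the arc $|e^{iz^2}|=e^{-R^2\sin 2\theta}$ and $\sin 2\theta\geqslant 4\theta/\pi$. Taking real and imaginary parts then gives $\lim S=\lim C=\frac12\sqrt{\pi/2}$, and existence of the limits follows from the convergence already established.

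\emph{Expansion \eqref{eq:exp F}.} This is the only step that needs an actual computation. Integrate by parts twice, writing $e^{it^2}=\frac{1}{2it}\frac{d}{dt}e^{it^2}$. The first integration gives
\[
\mathfrak{F}(x)=-\frac{e^{ix^2}}{2ix}-\int_x^\infty \frac{e^{it^2}}{2it^2}\,dt.
\]
Integrating by parts once more in the remainder produces a boundary term $O(x^{-3})$ and an absolutely convergent integral $\int_x^\infty O(t^{-4})\,dt=O(x^{-3})$. This yields \eqref{eq:exp F}.

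The main obstacle is not conceptual. It is pinning down the explicit inequality $C(\sqrt{\pi/2})<1$, which requires a careful Taylor bound rather than a trivial estimate.
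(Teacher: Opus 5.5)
Your argument for \eqref{eq:exp F} is the paper's proof. It uses two integrations by parts with $e^{it^2}=\frac{1}{2it}\frac{d}{dt}e^{it^2}$, which produce a boundary term of order $x^{-3}$ and a remainder $\int_x^\infty O(t^{-4})\,dt$. There is one harmless sign slip: the first step yields $+\int_x^\infty\frac{e^{it^2}}{2it^2}\,dt$, not $-$.

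For the remaining assertions the paper gives no proof and simply cites Lebedev. Your alternating-lobe argument and the contour rotation for $\mathfrak{F}(0)$ are therefore additional material. They are sound, but two numerical points need fixing.

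\begin{itemize}
\item The crude bound you display for $S(\sqrt\pi)$ equals $\frac13+\sqrt\pi-1\approx1.11$, so it does not give $<1$. The finer estimate is genuinely needed. For example, $\sin u\leqslant u-\frac{u^3}{6}+\frac{u^5}{120}$ gives $S(\sqrt\pi)\leqslant\frac{\pi^{3/2}}{3}-\frac{\pi^{7/2}}{42}+\frac{\pi^{11/2}}{1320}\approx0.96$.
\item For $C\geqslant0$, the inequality $b_1<b_0$ does not follow from the decreasing weight. Here $b_1$ is the first negative lobe and $b_0=\int_0^{\sqrt{\pi/2}}\cos(t^2)\,dt$ is only a half lobe. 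You need a direct check, such as $b_1\leqslant\frac{1}{2\sqrt{\pi/2}}\int_{\pi/2}^{3\pi/2}|\cos u|\,du=\sqrt{2/\pi}\approx0.80$ against $b_0\geqslant\sqrt{\pi/2}-\frac{1}{10}(\pi/2)^{5/2}\approx0.94$, which follows from $\cos(t^2)\geqslant1-\frac{t^4}{2}$. What the monotone weight gives is only $b_1>b_2>\cdots$; the comparison $b_0>b_1$ does not propagate from it.
\end{itemize}
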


\begin{proof}
We only prove the asymptotic expansion of $\mathfrak F$ since the other properties are classical. We refer the reader to  \cite{Lebedev} for the details. Let $x>0$, integrating by parts twice gives
\begin{align*}
\mathfrak{F}\left(x\right)
&= -e^{ix^{2}}\frac{1}{2ix}-e^{ix^{2}}\frac{1}{\left(2i\right)^{2}x^{3}}+\int_{x}^{+\infty}e^{iv^{2}}\frac{3}{\left(2i\right)^{2}v^{4}}dv
= -e^{ix^{2}}\frac{1}{2ix}+O\left(\frac{1}{x^{3}}\right).
\end{align*}
\end{proof}

\subsection{Oscillatory integrals}
The asymptotic analysis of oscillatory integrals of the form
\[
\int_a^b e^{i\lambda f(u)}\,g(u)\,du
\]
as $\lambda\to+\infty$ is classical and is governed by the stationary points of the phase $f$.
In what follows we require uniform asymptotic expansions that are valid when one endpoint
of integration approaches a stationary point of $f$.
We do not claim any particular novelty here: rather, we revisit classical arguments (see e.g. \cite{Erdelyi})
 with this uniformity issue in mind.
We begin with the following elementary estimate.

\begin{lem}
\label{lem:Fresnel-1}Let $u_{1}\leqslant0$ and $u_{2}>\eta>0$. Then, for $\lambda$ large enough,
\begin{equation}
\int_{u_{1}}^{u_{2}}e^{i\lambda u^{2}}du=\lambda^{-1/2}\left[2\mathfrak{F}(0) - \mathfrak{F} ({-\lambda^{1/2}u_{1}})\right]+O\left(\frac{1}{\lambda}\right)\label{eq:stima I(lambda)},
\end{equation}
where the implicit constant in O-term  only depends on $\eta$.
\end{lem}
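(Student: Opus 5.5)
The plan is to split the interval at $0$, rescale by $\lambda^{1/2}$, and express everything through the Fresnel tail $\mathfrak F$, using \eqref{eq:F(0)} and \eqref{eq:exp F} from Lemma \ref{lem:Fresnel}.

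First, write $\int_{u_1}^{u_2}=\int_{u_1}^{0}+\int_0^{u_2}$. In the first piece, substitute $u\mapsto -u$ and then $t=\lambda^{1/2}u$. This gives
\[
\int_{u_1}^{0}e^{i\lambda u^2}du=\lambda^{-1/2}\int_0^{-\lambda^{1/2}u_1}e^{it^2}dt=\lambda^{-1/2}\bigl[\mathfrak F(0)-\mathfrak F(-\lambda^{1/2}u_1)\bigr],
\]
and this identity is exact, with no error term. It holds uniformly in $u_1\leqslant0$, including $u_1=0$, where $\mathfrak F(0)-\mathfrak F(0)=0$, and the case $-\lambda^{1/2}u_1$ large. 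Here I only use that $\mathfrak F(x)$ is finite for every $x\geqslant0$, which is the convergence statement in Lemma \ref{lem:Fresnel}.

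For the second piece, the same substitution gives
\[
\int_0^{u_2}e^{i\lambda u^2}du=\lambda^{-1/2}\bigl[\mathfrak F(0)-\mathfrak F(\lambda^{1/2}u_2)\bigr].
\]
Since $u_2>\eta$, we have $\lambda^{1/2}u_2>\lambda^{1/2}\eta$, which is large. Expansion \eqref{eq:exp F} then gives $|\mathfrak F(x)|\leqslant \frac{1}{2x}+O(x^{-3})\lesssim x^{-1}$ for $x\geqslant \lambda^{1/2}\eta\geqslant1$. Hence
\[
\lambda^{-1/2}|\mathfrak F(\lambda^{1/2}u_2)|\lesssim \lambda^{-1/2}\cdot\frac{1}{\lambda^{1/2}u_2}\leqslant \frac{1}{\eta\lambda}.
\]
The implied constant depends only on $\eta$, as required. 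Adding the two pieces yields \eqref{eq:stima I(lambda)}.

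The only point that needs care is uniformity. The bound on $\mathfrak F$ at large arguments must hold with an absolute constant, and the $O(x^{-3})$ remainder in \eqref{eq:exp F} must be uniform. I will check this by bounding the remainder integral $\int_x^\infty \frac{3}{4v^4}dv$ explicitly, or more simply by a single integration by parts, which gives $|\mathfrak F(x)|\leqslant 1/x$ for every $x>0$. This also shows that "$\lambda$ large enough" is only needed so that $\lambda^{1/2}\eta\geqslant1$, and even that condition can be dropped if the $1/x$ bound is used.
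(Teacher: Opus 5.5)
Your proposal is correct and follows the paper's proof: you split at $0$, rescale so both pieces are written through $\mathfrak{F}$, and bound the $u_2$-tail by $\lambda^{-1/2}|\mathfrak{F}(\lambda^{1/2}u_2)|\lesssim (\lambda u_2)^{-1}\leqslant (\eta\lambda)^{-1}$. Your extra remark is a small but genuine clean-up: one integration by parts gives $|\mathfrak{F}(x)|\leqslant 1/x$ for every $x>0$, which makes the uniformity in $u_1,u_2$ explicit and removes the need for $\lambda$ to be large.
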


\begin{proof}
Let us write
\begin{align*}
\int_{u_{1}}^{u_{2}}e^{i\lambda u^{2}}du&= \int_{u_{1}}^{0}e^{i\lambda u^{2}}du+\int_{0}^{u_{2}}e^{i\lambda u^{2}}du=\int_{0}^{-u_{1}}e^{i\lambda u^{2}}du+\int_{0}^{u_{2}}e^{i\lambda u^{2}}du\\
&= 2\int_{0}^{+\infty}e^{i\lambda u^{2}}du-\int_{-u_{1}}^{+\infty}e^{i\lambda u^{2}}du-\int_{u_{2}}^{+\infty}e^{i\lambda u^{2}}du\\
&= \lambda^{-1/2}\left[2\mathfrak{F}(0)- 
\mathfrak{F}(-\lambda^{1/2}u_{1})\right]-\lambda^{-1/2}\mathfrak{F}({\lambda^{1/2}u_{2}}).
\end{align*}
For sufficiently large $\lambda$ by  \eqref{eq:exp F} we have $\lambda^{-1/2}\mathfrak{F}(\lambda^{1/2}u_{2})=O((\lambda u_2)^{-1})$ and the lemma follows.
\end{proof}
The previous lemma treats the model phase $u^2$ when one of the endpoints approaches the stationary point $u=0$. We now extend this estimate to a general one-dimensional phase using the standard change of variables near the stationary point.

\begin{prop}
\label{prop:Fase staz}Let $\tau\in\left[a,b\right]$, let $g:[a,b]\to\mathbb{R}$ be a smooth function and
$h_{\tau}:[a,b]\to\mathbb{R}$ be a family of smooth
functions. Assume that 
$
h'_{\tau}\left(\tau\right)=0$
and that there exist positive constants $\eta_{1}$ and $\eta_{2}$,
independent of $\tau$, such that for every $x\in\left[a,b\right]$
and every $\tau$ we have
\begin{equation}
h_{\tau}''\left(x\right)\geqslant\eta_{1}\label{eq:h'' from below}
\end{equation}
and
\begin{equation}
\sum_{j=0}^{4}\left|h_{\tau}^{\left(j\right)}\left(x\right)\right|\leqslant\eta_{2}.\label{eq:derivate h da sopra}
\end{equation}
Then
\[
\int_{a}^{b}e^{i\lambda h_{\tau}\left(x\right)}g\left(x\right)dx=e^{i\lambda h_\tau\left(\tau\right)}\sqrt{\frac{2}{h_\tau''\left(\tau\right)}}g\left(\tau\right)\lambda^{-1/2}\Xi\left(\lambda,\tau\right)+O\left(\lambda^{-1}\right)
\]
where $\Xi\left(\lambda,\tau\right)=\left|\Xi\left(\lambda,\tau\right)\right|e^{i\vartheta\left(\lambda,\tau\right)}$
satisfies
\[
\frac{\sqrt{\pi}}{2}\leqslant\left|\Xi\left(\lambda,\tau\right)\right|\leqslant\left(\frac{\sqrt{\pi}}{2}+\sqrt{2}\right).
\]and
\[
\frac{\pi}{4}-\delta\leqslant\vartheta\left(\lambda,\tau\right)\leqslant\frac{\pi}{4}+\delta
\]
for a suitable $\delta<\frac{\pi}{4}$. Moreover the implicit constant
in the $O\left(\lambda^{-1}\right)$-term depends on $\eta_1$, $\eta_2$ and a finite number of derivatives of $g$ but is independent of
$\tau$.
\end{prop}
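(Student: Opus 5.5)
The plan is the standard Morse-lemma reduction to the model phase $u^2$, followed by Lemma~\ref{lem:Fresnel-1}, with all constants controlled by $\eta_1,\eta_2$.

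\emph{Change of variables.} Set $\phi(x)=h_\tau(x)-h_\tau(\tau)$. By Taylor's formula with integral remainder,
\[
\phi(x)=(x-\tau)^2\int_0^1(1-s)h_\tau''(\tau+s(x-\tau))\,ds=(x-\tau)^2 q_\tau(x),
\]
where $\eta_1/2\leqslant q_\tau\leqslant\eta_2/2$. The derivatives of $q_\tau$ up to order two are bounded in terms of $\eta_2$, because $q_\tau$ is an average of $h_\tau''$ and we control four derivatives of $h_\tau$. Define $u=\psi(x)=(x-\tau)\sqrt{q_\tau(x)}$, so that $\phi=u^2$. Differentiating $\psi^2=\phi$ gives $2\psi\psi'=h_\tau'$. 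Hence $\psi'>0$, with $\psi'(\tau)=\sqrt{h_\tau''(\tau)/2}$ and $\psi'$ bounded above and below uniformly in $\tau$. So $\psi$ is a diffeomorphism onto $[u_1,u_2]=[\psi(a),\psi(b)]$, with $u_1\leqslant0\leqslant u_2$. The integral becomes
\[
e^{i\lambda h_\tau(\tau)}\int_{u_1}^{u_2}e^{i\lambda u^2}G(u)\,du,\qquad G=(g\circ\psi^{-1})\,(\psi^{-1})',
\]
where $G(0)=g(\tau)\sqrt{2/h_\tau''(\tau)}$ and $\|G'\|_\infty,\|G''\|_\infty$ are bounded uniformly in $\tau$.

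\emph{Splitting off the main term.} Write $G(u)=G(0)+uR(u)$, where $R$ is $C^1$ with uniform bounds. The remainder term is handled by one integration by parts:
\[
\int u R(u) e^{i\lambda u^2}\,du=\frac{1}{2i\lambda}\Big[e^{i\lambda u^2}R\Big]_{u_1}^{u_2}-\frac{1}{2i\lambda}\int e^{i\lambda u^2}R'(u)\,du=O(\lambda^{-1}).
\]
The main term $G(0)\int_{u_1}^{u_2}e^{i\lambda u^2}\,du$ requires a case distinction according to which endpoints are near $0$.
\begin{itemize}
\item If $u_2>\eta$ for a fixed $\eta$, Lemma~\ref{lem:Fresnel-1} gives
\[
\lambda^{-1/2}\Xi+O(\lambda^{-1}),\qquad \Xi=2\mathfrak F(0)-\mathfrak F(-\lambda^{1/2}u_1).
\]
\item If $-u_1>\eta$, the reflected version of the lemma applies.
\item If both endpoints are small, $b-a$ is small. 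Since the statement is meant for fixed $[a,b]$, $|u_1|+u_2\gtrsim b-a$, so at least one endpoint exceeds $\eta=c(b-a)$. The constant then depends on $b-a$, which is fixed.
\end{itemize}

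\emph{Bounds for $\Xi$.} It remains to bound $\Xi=\int_0^\infty e^{it^2}\,dt+\int_0^{x}e^{it^2}\,dt=\mathfrak F(0)+C(x)+iS(x)$, where $x=-\lambda^{1/2}u_1\geqslant0$.
\begin{itemize}
\item By \eqref{eq:F(0)}, $\mathfrak F(0)=\tfrac{\sqrt\pi}{2}e^{i\pi/4}$. By Lemma~\ref{lem:Fresnel}, $C(x),S(x)\in[0,1)$.
\item Both components of $\Xi$ are therefore at least $\tfrac{\sqrt\pi}{2\sqrt2}$ and at most $\tfrac{\sqrt\pi}{2\sqrt2}+1$.
\item This gives $\tfrac{\sqrt\pi}{2}\leqslant|\Xi|\leqslant\tfrac{\sqrt\pi}{2}+\sqrt2$.
\item The argument $\vartheta$ lies in the interval where $\tan\vartheta\in[\tfrac{a_0}{a_0+1},\tfrac{a_0+1}{a_0}]$ with $a_0=\sqrt\pi/(2\sqrt2)$. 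This is strictly inside $(0,\pi/2)$, so $|\vartheta-\pi/4|\leqslant\delta<\pi/4$.
\end{itemize}

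\emph{Main obstacle.} The delicate step is the uniformity in $\tau$ of the change of variables. One must check that the $C^2$ norm of $\psi^{-1}$, and hence of $G$, depends only on $\eta_1$, $\eta_2$ and derivatives of $g$. This is the reason for the hypothesis on four derivatives in \eqref{eq:derivate h da sopra}.
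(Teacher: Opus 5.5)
Your proposal is correct and follows the paper's proof essentially step by step. Both use the same Taylor-remainder change of variables $u=(x-\tau)\sqrt{q_\tau(x)}$, the splitting $G(u)=G(0)+uR(u)$ with one integration by parts, Lemma~\ref{lem:Fresnel-1} once one endpoint is kept a fixed distance from $0$, and the same real/imaginary-part bounds for $\Xi$; the paper gets the endpoint separation by assuming $\tau\in[a,\frac{a+b}{2}]$ and bounding $u_2$ from below, which is equivalent to your $|u_1|+u_2\gtrsim b-a$. One small point: a uniform bound on $G''$ needs $\psi'''$, hence $(x-\tau)q_\tau'''$, which your remark about derivatives of $q_\tau$ up to order two does not cover; it is still bounded by $c\,\eta_2$ after integrating by parts in $s$ (the paper likewise only asserts this bound).
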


\begin{proof}
Since $h_{\tau}''\left(x\right)>0$ and $h'_{\tau}\left(\tau\right)=0$,
the function $h_{\tau}$ attains a minimum at $x=\tau$. It follows that
\begin{equation}
u\left(x\right)=\sqrt{h_{\tau}\left(x\right)-h_{\tau}\left(\tau\right)}\operatorname{sgn}(x-\tau)
\label{eq:u(x)}
\end{equation}
is strictly increasing, hence it is invertible and can be used as
a change of variables. By Taylor's formula with integral
remainder we can write
\[
h_{\tau}\left(x\right)=h_{\tau}\left(\tau\right)+\left(x-\tau\right)^{2}\int_{0}^{1}h_{\tau}''\left(v\tau+\left(1-v\right)x\right)vdv.
\]
Hence
\begin{align*}
u\left(x\right)&= \left(x-\tau\right)\sqrt{\int_{0}^{1}h_{\tau}''\left(v\tau+\left(1-v\right)x\right)vdv}
\end{align*}
which is a smooth function since $h''_{\tau}$ is strictly positive.
Moreover, by \eqref{eq:u(x)}, we have
\begin{equation}
u'\left(x\right)= \frac{h_\tau'\left(x\right)}{2\sqrt{h_\tau\left(x\right)-h_\tau\left(\tau\right)}}\operatorname{sgn}(x-\tau)= \frac{\int_{0}^{1}h_\tau''\left(v\tau+\left(1-v\right)x\right)dv}{2\sqrt{\int_{0}^{1}h_\tau''\left(v\tau+\left(1-v\right)x\right)vdv}}.\label{eq:u'(x)}
\end{equation}
In what follows we shall need uniform bounds for the derivatives for $x^{\left(k\right)}\left(u\right)$
with $k$ up to order $3$. Since $x\left(u\right)$ is the inverse
function of $u\left(x\right)$,  this bound follows from a bound from
below for $u'\left(x\right)$ and a bound from above for $\left|u^{\left(k\right)}\left(x\right)\right|$.
The bound from below follows from (\ref{eq:h'' from below}) and the
bound from above from (\ref{eq:derivate h da sopra}). From (\ref{eq:u'(x)})
we also obtain
\[
u'\left(\tau\right)=\sqrt{\frac{h_\tau''\left(\tau\right)}{2}}
\]
and therefore
\[
x'\left(0\right)=\sqrt{\frac{2}{h_\tau''\left(\tau\right)}}.
\]
Hence
\begin{align*}
\int_{a}^{b}e^{i\lambda h_{\tau}\left(x\right)}g\left(x\right)dx &=  e^{i\lambda h_{\tau}\left(\tau\right)}\int_{a}^{b}g\left(x\right)e^{i\lambda\left[h_{\tau}\left(x\right)-h_{\tau}\left(\tau\right)\right]}dx=  e^{i\lambda h_{\tau}\left(\tau\right)}\int_{u_{1}\left(\tau\right)}^{u_{2}\left(\tau\right)}g\left(x\left(u\right)\right)e^{i\lambda u^{2}}x'\left(u\right)du,
\end{align*}
where $u_{1}\left(\tau\right)=-\sqrt{h_{\tau}\left(a\right)-h_{\tau}\left(\tau\right)}$
and $u_{2}\left(\tau\right)=\sqrt{h_{\tau}\left(b\right)-h_{\tau}\left(\tau\right)}$.
Without loss of generality, we now assume $\tau\in\left[a,\frac{a+b}{2}\right]$, the case $\tau\in\left[\frac{a+b}{2},b\right]$ being similar. Let
$0<\varepsilon<\frac{b-a}{2}$ and observe that 
\begin{align*}
h_{\tau}\left(b\right)-h_{\tau}\left(\tau\right)\geqslant h_{\tau}\left(b\right)-h_{\tau}\left(b-\varepsilon\right)\geqslant & \int_{b-\varepsilon}^{b}h'_{\tau}\left(s\right)ds\geqslant\varepsilon h'_{\tau}\left(b-\varepsilon\right)\\
= &\,\varepsilon\int_{\tau}^{b-\varepsilon}h_{\tau}''\left(s\right)ds\geqslant\varepsilon\eta_{1}\left(\frac{b-a}{2}-\varepsilon\right)
\end{align*}
so that $u_{2}\left(\tau\right)$ is uniformly bounded away from $0$
(in the case $\tau\in\left[\frac{a+b}{2},b\right]$ it will be $u_{1}\left(\tau\right)$
bounded away from $0$). Then
\begin{align*}
& \int_{a}^{b}e^{i\lambda h_{\tau}\left(x\right)}g\left(x\right)dx = \,e^{i\lambda h_{\tau}\left(\tau\right)}g\left(\tau\right)\sqrt{\frac{2}{h''\left(\tau\right)}}\int_{u_{1}\left(\tau\right)}^{u_{2}\left(\tau\right)}e^{i\lambda u^{2}}du+e^{i\lambda h_{\tau}\left(\tau\right)}\int_{u_{1}\left(\tau\right)}^{u_{2}\left(\tau\right)}g_{1}\left(u\right)e^{i\lambda u^{2}}du,
\end{align*}
where
\[
g_{1}\left(u\right)= g\left(x\left(u\right)\right)x'\left(u\right)-g\left(\tau\right)\sqrt{\frac{2}{h_\tau''\left(\tau\right)}}.
\]
Since $g_{1}\left(0\right)=0$, we have
\begin{align*}
g_{1}\left(u\right)&= \int_{0}^{u}\left[g'\left(x\left(t\right)\right)\left[x'\left(t\right)\right]^{2}+g\left(x\left(t\right)\right)x''\left(t\right)\right]dt\\
&= \, u\int_{0}^{1}\left[g'\left(x\left(us\right)\right)\left[x'\left(us\right)\right]^{2}+g\left(x\left(us\right)\right)x''\left(us\right)\right]ds
\end{align*}
so that $g_{1}\left(u\right)=ug_{2}\left(u\right)$.
Since $g_{2}$ and $g_{2}'$ can be bounded uniformly in $\tau$ we
have
\begin{align*}
 & \int_{u_{1}\left(\tau\right)}^{u_{2}\left(\tau\right)}g_{1}\left(u\right)e^{i\lambda u^{2}}du= \int_{u_{1}\left(\tau\right)}^{u_{2}\left(\tau\right)}g_{2}\left(u\right)ue^{i\lambda u^{2}}du \\
 = &\left[g_{2}\left(u\right)\frac{e^{i\lambda u^{2}}}{2\lambda i}\right]_{u_{1}\left(\tau\right)}^{u_{2}\left(\tau\right)}-\frac{1}{2\lambda i}\int_{u_{1}\left(\tau\right)}^{u_{2}\left(\tau\right)}g_{2}'\left(u\right)e^{i\lambda u^{2}}du = O\left(\lambda^{-1}\right).
\end{align*}
By Lemma \ref{lem:Fresnel-1}, we have
\[
\int_{a}^{b}e^{i\lambda h_{\tau}\left(x\right)}g\left(x\right)dx= \lambda^{-1/2}e^{i\lambda h_\tau\left(\tau\right)}g\left(\tau\right)\sqrt{\frac{2}{h_\tau''\left(\tau\right)}}\Xi\left(\lambda,\tau\right)+O\left(\lambda^{-1}\right),
\]
where 
\begin{align*}
\Xi\left(\lambda,\tau\right)&= 2\mathfrak{F}(0)-\mathfrak{F}(-\lambda^{1/2}u_{1}(\tau))=\int_{0}^{+\infty}e^{iv^{2}}dv+\int_{0}^{-\lambda^{1/2}u_{1}\left(\tau\right)}e^{iv^{2}}dv.
\end{align*}
By Lemma \ref{lem:Fresnel}
\[
\sqrt{\frac{\pi}{8}}=\int_{0}^{+\infty}\cos\left(v^{2}\right)dv\leqslant\mathrm{Re}\left(\Xi\left(\lambda,\tau\right)\right)\leqslant\sqrt{\frac{\pi}{8}}+1
\]
and
\[
\sqrt{\frac{\pi}{8}}=\int_{0}^{+\infty}\sin\left(v^{2}\right)dv\leqslant\mathrm{Im}\left(\Xi\left(\lambda,\tau\right)\right)\leqslant\sqrt{\frac{\pi}{8}}+1.
\]
Hence
\[
\frac{\sqrt{\pi}}{2}\leqslant\left|\Xi\left(\lambda,\tau\right)\right|\leqslant\left(\frac{\sqrt{\pi}}{2}+\sqrt{2}\right).
\]
If $\vartheta\left(\lambda,\tau\right)=\arg\Xi\left(\lambda,\tau\right)
$, we also have
\[
\frac{\pi}{4}-\delta\leqslant\vartheta\left(\lambda,\tau\right)\leqslant\frac{\pi}{4}+\delta, 
\]
for a suitable $0<\delta<\pi/4.$

\end{proof}

The following lemma gives an estimate for the quadratic phase when the critical point lies outside the interval of integration. This estimate will later be used to study the transition from the non-stationary regime to the stationary one.

\begin{lem}
\label{lem:Fresnel pto critico esterno}Let $0\leqslant u_{1}\leqslant\eta\leqslant u_{2}\leqslant K$,
let $\psi\in C^{2}\left(\mathbb{R}\right)$. 
Then, as $\lambda\rightarrow+\infty$,
\[
\int_{u_{1}}^{u_{2}}e^{i\lambda u^{2}}\psi\left(u\right)du=\psi\left(0\right)\lambda^{-1/2}\mathfrak{F}\left(\lambda^{1/2}u_{1}\right)+O\left(\frac{1}{\lambda}\right).
\]
In particular, if $\lambda^{1/2}u_{1}$ is large,
\begin{equation}
\int_{u_{1}}^{u_{2}}e^{i\lambda u^{2}}\psi\left(u\right)du=\frac{1}{\lambda u_1}\left(e^{i\lambda u_{1}^{2}}\frac{i}{2}\psi\left(0\right)+O\left(\frac{1}{\left(\lambda^{1/2}u_{1}\right)^{2}}\right)+O\left(u_1\right)\right).\label{eq:10}
\end{equation}
Moreover the implicit constants
in the O--terms only depend on $\eta$, $K$, and the derivatives of $\psi$.
\end{lem}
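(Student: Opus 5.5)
The plan is to split the amplitude as $\psi(u)=\psi(0)+u\,\psi_1(u)$, where $\psi_1(u)=\int_0^1\psi'(su)\,ds$ is $C^1$ and has $\psi_1,\psi_1'$ bounded on $[0,K]$ in terms of $\|\psi'\|_\infty,\|\psi''\|_\infty$ there. The constant part is explicit:
\[
\psi(0)\int_{u_1}^{u_2}e^{i\lambda u^2}du=\psi(0)\lambda^{-1/2}\bigl[\mathfrak F(\lambda^{1/2}u_1)-\mathfrak F(\lambda^{1/2}u_2)\bigr].
\]
Since $u_2\geqslant\eta$, formula \eqref{eq:exp F} gives $|\mathfrak F(\lambda^{1/2}u_2)|\lesssim(\lambda^{1/2}\eta)^{-1}$ for $\lambda$ large. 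The second term therefore contributes $O(\lambda^{-1})$ with a constant depending only on $\eta$, exactly as in the proof of Lemma~\ref{lem:Fresnel-1}.

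For the remainder $\int_{u_1}^{u_2}u\psi_1(u)e^{i\lambda u^2}du$, I would integrate by parts once using $\frac{d}{du}e^{i\lambda u^2}=2i\lambda u\,e^{i\lambda u^2}$. This gives
\[
\Bigl[\psi_1(u)\frac{e^{i\lambda u^2}}{2i\lambda}\Bigr]_{u_1}^{u_2}-\frac1{2i\lambda}\int_{u_1}^{u_2}\psi_1'(u)e^{i\lambda u^2}du.
\]
Both terms are bounded by $c(1+K)\lambda^{-1}$, with $c$ depending on $\psi',\psi''$ on $[0,K]$. This proves the first formula, uniformly in $0\leqslant u_1\leqslant\eta\leqslant u_2\leqslant K$.

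For \eqref{eq:10}, I would not simply substitute \eqref{eq:exp F} into the first formula, because the $O(\lambda^{-1})$ error is too crude. When divided by $(\lambda u_1)^{-1}$ it becomes $O(u_1)$, which is exactly the third error term allowed in \eqref{eq:10}. So it is enough to keep the decomposition above and estimate each piece relative to $(\lambda u_1)^{-1}$.

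The constant part gives $\psi(0)\lambda^{-1/2}\mathfrak F(\lambda^{1/2}u_1)$. By \eqref{eq:exp F} this equals
\[
\psi(0)\lambda^{-1/2}\Bigl(-\frac{e^{i\lambda u_1^2}}{2i\lambda^{1/2}u_1}+O\bigl((\lambda^{1/2}u_1)^{-3}\bigr)\Bigr)=\frac{1}{\lambda u_1}\Bigl(\frac i2\psi(0)e^{i\lambda u_1^2}+O\bigl((\lambda^{1/2}u_1)^{-2}\bigr)\Bigr),
\]
using $-1/(2i)=i/2$. The $u_2$-endpoint piece is $O(\lambda^{-1})=(\lambda u_1)^{-1}O(u_1)$. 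The remainder piece is also $O(\lambda^{-1})$, hence absorbed into $(\lambda u_1)^{-1}O(u_1)$.

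The one delicate point is that every constant must be independent of $u_1,u_2$. This is guaranteed because $u_2$ stays at least $\eta$ away from the stationary point $0$ and everything lives in $[0,K]$. No other step should present an obstacle.
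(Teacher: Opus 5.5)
Your proof is correct and is essentially the paper's argument: you split off $\psi(0)$, integrate the $u\psi_1(u)$ remainder by parts once, dispose of the $u_2$-endpoint Fresnel tail with \eqref{eq:exp F}, and expand $\mathfrak{F}(\lambda^{1/2}u_1)$. Your $\psi_1$ differs from the paper's only by the factor $2i$. Your caveat about not substituting \eqref{eq:exp F} directly into the first formula is unnecessary: as you observe yourself, the $O(\lambda^{-1})$ error equals $(\lambda u_1)^{-1}O(u_1)$, and this is exactly how the paper concludes.
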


\begin{proof}
Let 
\[
\psi_{1}(u)=\frac{\psi\left(u\right)-\psi\left(0\right)}{2iu}=\frac{1}{2i}\int_0^1\psi'(tu)dt.
\]
Since $\psi\in C^2(\mathbb{R})$ it follows that $\psi_1\in C^1(\mathbb R)$. Hence
\begin{align*}
\int_{u_{1}}^{u_{2}}e^{i\lambda u^{2}}\psi\left(u\right)du&= \psi\left(0\right)\int_{u_{1}}^{u_{2}}e^{i\lambda u^{2}}du+\frac{1}{\lambda}\int_{u_{1}}^{u_{2}}2ui\lambda e^{i\lambda u^{2}}\psi_{1}\left(u\right)du\\
&= \psi\left(0\right)\lambda^{-1/2}\int_{\lambda^{1/2}u_{1}}^{\lambda^{1/2}u_{2}}e^{iv^{2}}dv+\frac{1}{\lambda}\left[e^{i\lambda u^{2}}\psi_{1}\left(u\right)\right]_{u_{1}}^{u_{2}}-\frac{1}{\lambda}\int_{u_{1}}^{u_{2}}e^{i\lambda u^{2}}\psi_{1}'\left(u\right)du\\
&= \psi\left(0\right)\lambda^{-1/2}\left[\mathfrak{F}\left(\lambda^{1/2}u_{1}\right)-\mathfrak{F}\left(\lambda^{1/2}u_{2}\right)\right]+O\left(\frac{1}{\lambda}\right).
\end{align*}
Since $u_{2}\geqslant\eta$, by \eqref{eq:exp F} we have
\[
\mathfrak{F}\left(\lambda^{1/2}u_{2}\right)=O\left(\frac{1}{\lambda^{1/2}}\right).
\]
Hence
\[
\int_{u_{1}}^{u_{2}}e^{i\lambda u^{2}}\psi\left(u\right)du=\psi\left(0\right)\lambda^{-1/2}\mathfrak{F}\left(\lambda^{1/2}u_{1}\right)+O\left(\frac{1}{\lambda}\right).
\]
Using again \eqref{eq:exp F} we obtain 
\begin{align*}
\int_{u_{1}}^{u_{2}}e^{i\lambda u^{2}}\psi\left(u\right)du&=\lambda^{-1/2}\left(e^{i\lambda u_{1}^{2}}\frac{i}{2\lambda^{1/2}u_{1}}\psi\left(0\right)+O\left(\frac{1}{\left(\lambda^{1/2}u_{1}\right)^{3}}\right)+O\left(\frac{1}{\lambda^{1/2}}\right)\right)\\
&=\frac{1}{\lambda u_1}\left(e^{i\lambda u_{1}^{2}}\frac{i}{2}\psi\left(0\right)+O\left(\frac{1}{\left(\lambda^{1/2}u_{1}\right)^{2}}\right)+O\left(u_1\right)\right)
\end{align*}
which gives \eqref{eq:10}.
\end{proof}

We now extend the preceding estimate for the quadratic phase to a general one-dimensional phase whose critical point lies outside the interval of integration and may approach one of its endpoints.

\begin{prop}\label{prop:tau fuori}
Let $a<b$, let $\varepsilon>0$ and let $\tau\in\left[a-\varepsilon,a\right]$.
Let $g:\left[a-\varepsilon,b\right]\to\mathbb{R}$ be a smooth function
and let $h_{\tau}:\left[a-\varepsilon,b\right]\to\mathbb{R}$ be a
family of smooth functions. Assume that
$
h_{\tau}'\left(\tau\right)=0
$
and that there exist positive constants $\eta_{1},\eta_{2}$ independent
if $\tau$ such that for every $x\in\left[a-\varepsilon,b\right]$
and every $\tau\in\left[a-\varepsilon,a\right]$ we have
\[
h_{\tau}''\left(x\right)\geqslant\eta_{1}
\]
and
\[
\sum_{j=0}^{4}\left|h_{\tau}^{\left(j\right)}\left(x\right)\right|\leqslant\eta_{2}.
\]
Then
\[
\int_{a}^{b}e^{i\lambda h_{\tau}\left(x\right)}g\left(x\right)dx=e^{i\lambda h_{\tau}\left(\tau\right)}g\left(\tau\right)\sqrt{\frac{2}{h''\left(\tau\right)}}\lambda^{-1/2}\mathfrak{F}\left(\lambda^{1/2}\sqrt{h(a)-h(\tau)}\right)+O\left(\frac{1}{\lambda}\right),
\]
and the error term $O(\frac 1\lambda)$ is uniform in $\tau\in[a-\varepsilon,a]$.
\end{prop}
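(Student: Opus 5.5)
The plan is to mimic the proof of Proposition \ref{prop:Fase staz}, changing variables on the whole interval $[a-\varepsilon,b]$ and then applying Lemma \ref{lem:Fresnel pto critico esterno} in place of Lemma \ref{lem:Fresnel-1}. Since $h_\tau''\geqslant\eta_1$ and $h_\tau'(\tau)=0$, the function $h_\tau$ is increasing on $[\tau,b]\supseteq[a,b]$. Define
\[
u(x)=\sqrt{h_\tau(x)-h_\tau(\tau)}\operatorname{sgn}(x-\tau)=(x-\tau)\sqrt{\int_0^1 h_\tau''(v\tau+(1-v)x)\,v\,dv}
\]
on $[a-\varepsilon,b]$. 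Exactly as before, $u$ is smooth and strictly increasing. We have $u'(\tau)=\sqrt{h_\tau''(\tau)/2}$, and $u'$ is bounded below uniformly in $\tau$. The derivatives of $u$ up to order $3$ are bounded above uniformly in $\tau$, using \eqref{eq:derivate h da sopra}. Hence the inverse $x(u)$ has $x',x'',x'''$ bounded uniformly in $\tau$.

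After the substitution,
\[
\int_a^b e^{i\lambda h_\tau(x)}g(x)\,dx=e^{i\lambda h_\tau(\tau)}\int_{u_1}^{u_2}e^{i\lambda u^2}\psi(u)\,du,\qquad \psi(u)=g(x(u))\,x'(u),
\]
with $u_1=\sqrt{h_\tau(a)-h_\tau(\tau)}\geqslant 0$ and $u_2=\sqrt{h_\tau(b)-h_\tau(\tau)}$.

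We now check the hypotheses of Lemma \ref{lem:Fresnel pto critico esterno} uniformly in $\tau$.
\begin{itemize}
\item $u_1\leqslant \sqrt{\eta_2\varepsilon^2/2}$, by Taylor's formula with $|h''|\leqslant\eta_2$.
\item $u_2^2\geqslant \eta_1 (b-\tau)^2/2\geqslant \eta_1(b-a)^2/2$, so $u_2$ is bounded below.
\item $u_2\leqslant K$, since $h_\tau$ is bounded by $\eta_2$.
\end{itemize}
To guarantee $u_1\leqslant\eta\leqslant u_2$ for a fixed $\eta$, we need $\sqrt{\eta_2}\varepsilon/\sqrt2<\sqrt{\eta_1}(b-a)/\sqrt2$. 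If this fails, we split $[a,b]$ at a fixed point $a'$ close to $a$, with $a'-a$ small compared with $b-a$. On $[a',b]$ the phase has no critical point and $|h_\tau'|\geqslant\eta_1(a'-\tau)$ is bounded below, so a single integration by parts gives $O(\lambda^{-1})$. On the remaining piece the lemma's hypotheses hold. Alternatively, one can note that the proof of the lemma only uses $u_2\geqslant\eta$ and never uses $u_1\leqslant\eta$; I would simply check that.

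We also need $\psi$ to be $C^2$ with bounds uniform in $\tau$, extended to a $C^2$ function on $\mathbb{R}$ with controlled norms. This holds because $x'$ and $x''$ are uniformly bounded and $g$ is smooth.

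Lemma \ref{lem:Fresnel pto critico esterno} then gives
\[
\int_{u_1}^{u_2}e^{i\lambda u^2}\psi(u)\,du=\psi(0)\lambda^{-1/2}\mathfrak F(\lambda^{1/2}u_1)+O(\lambda^{-1}),
\]
and $\psi(0)=g(\tau)x'(0)=g(\tau)\sqrt{2/h_\tau''(\tau)}$, which is the claimed formula.

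The main obstacle is uniformity. The constants in the $O$-term must depend only on $\eta_1$, $\eta_2$, $\varepsilon$, $b-a$ and derivatives of $g$, not on $\tau$. This requires the uniform $C^3$ control of $x(u)$, which we get from the lower bound on $u'$ and the formula \eqref{eq:u'(x)}. It also requires the uniform lower bound on $u_2$, which rests on the endpoint separation discussed above.
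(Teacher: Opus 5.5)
Your proposal is correct and follows the paper's proof: the substitution $u=\sqrt{h_\tau(x)-h_\tau(\tau)}$, the bounds $u_1\lesssim\varepsilon$ and $u_2\gtrsim b-a$, an application of Lemma \ref{lem:Fresnel pto critico esterno}, and the evaluation $\psi(0)=g(\tau)\sqrt{2/h_\tau''(\tau)}$. Of your two remedies for the lemma's hypothesis $u_1\leqslant\eta$, the second is the right one and is what the paper tacitly uses: the lemma's proof only needs $0\leqslant u_1\leqslant u_2\leqslant K$ and $u_2\geqslant\eta$. The first remedy, splitting $[a,b]$ at a point $a'$ near $a$, does not help, because it lowers $u_2$; a useful split would be in $\tau$, since when $a-\tau$ is bounded below both sides are trivially $O(\lambda^{-1})$.
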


\begin{proof}
Since $h_{\tau}'$ is strictly increasing, it follows that $h_{\tau}'\left(x\right)=0$
if and only if $x=\tau$. By the Taylor's formula with integral remainder
\[
h\left(x\right)-h\left(\tau\right)=\left(x-\tau\right)^{2}\int_{0}^{1}h''\left(\tau+s\left(x-\tau\right)\right)\left(1-s\right)ds
\]
so that, for every $x\in\left[a,b\right]$, we have
\begin{equation}
\sqrt{h\left(x\right)-h\left(\tau\right)}=\left(x-\tau\right)\sqrt{\int_{0}^{1}h''\left(\tau+s\left(x-\tau\right)\right)\left(1-s\right)ds}.\label{eq:Taylor integral}
\end{equation}
Since the integral is strictly positive it follows that $u=\sqrt{h\left(x\right)-h\left(\tau\right)}$
is a smooth change of variables. Hence
\[
\int_{a}^{b}e^{i\lambda h_{\tau}\left(x\right)}g\left(x\right)dx=e^{i\lambda h_{\tau}\left(\tau\right)}\int_{u_{1}}^{u_{2}}e^{i\lambda u^{2}}\psi\left(u\right)du
\]
where $
u_{1}=\sqrt{h\left(a\right)-h\left(\tau\right)}$,  $u_{2}=\sqrt{h\left(b\right)-h\left(\tau\right)},
$
and $
\psi\left(u\right)=g\left(x\left(u\right)\right)x'\left(u\right)$. 
Since
\[
u'\left(x\right)=\frac{h'\left(x\right)}{2\sqrt{h\left(x\right)-h\left(\tau\right)}}=\frac{\int_{0}^{1}h''\left(\tau+s\left(x-\tau\right)\right)ds}{2\sqrt{\int_{0}^{1}h''\left(\tau+s\left(x-\tau\right)\right)\left(1-s\right)ds}},
\]
arguing as in the proof of Proposition \ref{prop:Fase staz} we see that
$\psi\in C^{2}$. Observe now that by (\ref{eq:Taylor integral})
we have $
u_{1}\leqslant c\left(a-\tau\right)\leqslant c\varepsilon$. 
and $u_{2}\geqslant c\left(b-a\right)$. By Lemma \ref{lem:Fresnel pto critico esterno}, we have
\[
\int_{u_{1}}^{u_{2}}e^{i\lambda u^{2}}\psi\left(u\right)du=\psi\left(0\right)\lambda^{-1/2}\mathfrak{F}\left(\lambda^{1/2}u_{1}\right)+O\left(\frac{1}{\lambda}\right).
\]
Since
\[
x'\left(0\right)=\frac{1}{u'\left(\tau\right)}=\sqrt{\frac{2}{h''\left(\tau\right)}},
\]
the proposition follows. 
\end{proof}

\begin{rem}
An analogous statement holds when the stationary point lies to the right of the integration interval. More precisely, if $\tau \in [b, b+\varepsilon]$, the same asymptotic expansion remains valid, provided the assumptions are adapted accordingly.  The resulting formula is obtained by replacing $h_\tau(a)-h_\tau(\tau)$ with $h_\tau(b)-h_\tau(\tau)$ in the argument of $\mathfrak{F}$.
\end{rem}

\subsection{A Bessel representation of $\widehat{\chi_{\mathcal B}}$}

Since the barrel $\mathcal{B}$ is invariant under rotations about the $x_d$ axis, we have
\[
\widehat{\chi_{\mathcal{B}}}\left(\rho\Theta\right)=\widehat{\chi_{\mathcal{B}}}\left(\rho\Theta',\rho\Theta_{d}\right)=\widehat{\chi_{\mathcal{B}}}\left(\rho\left|\Theta'\right|,0,\ldots,0,\rho\Theta_{d}\right).
\]
Thus, in deriving the formula below, we may assume without loss of generality that $\Theta=\left(\Theta_{1},0,\ldots,0,\Theta_{d}\right)$.
With this convention,
\begin{align*}
\widehat{\chi_{\mathcal{B}}}\left(\rho\Theta\right)&= \int_{-\beta}^{\beta}\int_{\left|x\right|\leqslant R\left(z\right)}e^{-2\pi i\rho\left(\Theta_{1}x_{1}+\Theta_{d}z\right)}dxdz,
\end{align*}
where
$$
R\left(z\right)=-\alpha+\sqrt{1-z^{2}}.$$

Due to the symmetry of $\mathcal{B}$, we can also express its Fourier transform in terms of Bessel functions.

\begin{lem}
\label{lem:Fourier int Bessel} Assume $\Theta_1\neq0$. Then
\begin{align*}
\widehat{\chi_{\mathcal{B}}}\left(\rho\Theta\right)&= \frac{1}{\left|\rho\Theta_{1}\right|^{\frac{d-1}{2}}}\int_{-\beta}^{\beta}e^{-2\pi i\rho\Theta_{d}z}J_{\frac{d-1}{2}}\left(2\pi\rho\left|\Theta_{1}\right|R\left(z\right)\right)R\left(z\right)^{\frac{d-1}{2}}dz.
\end{align*}
\end{lem}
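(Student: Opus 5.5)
Starting from the slice representation just written, the plan is to fix the height $z\in[-\beta,\beta]$ and compute the inner integral over the $(d-1)$-dimensional ball $\{|x|\leqslant R(z)\}$. This is the Fourier transform of the indicator of a ball of radius $R(z)$ in $\mathbb{R}^{d-1}$, evaluated at the frequency $\rho\Theta'=(\rho\Theta_1,0,\dots,0)$. The $z$-dependent phase $e^{-2\pi i\rho\Theta_d z}$ factors out of the $x$-integral, so by Fubini
\[
\widehat{\chi_{\mathcal{B}}}(\rho\Theta)=\int_{-\beta}^{\beta}e^{-2\pi i\rho\Theta_d z}\,\widehat{\chi_{B_{R(z)}}}(\rho\Theta')\,dz ,
\]
where $B_r$ denotes the ball of radius $r$ centred at the origin of $\mathbb{R}^{d-1}$. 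Since $0<\beta<\sqrt{1-\alpha^2}$, we have $R(z)>0$ on $[-\beta,\beta]$, so every slice is a genuine ball.

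Next I will invoke the classical formula for the Fourier transform of the indicator of the unit ball in $\mathbb{R}^n$:
\[
\widehat{\chi_{B_1}}(\xi)=|\xi|^{-n/2}J_{n/2}(2\pi|\xi|), \qquad \xi\neq 0 .
\]
This follows from the radial Fourier transform formula, or equivalently from the recursion $\frac{d}{dr}\bigl(r^{\nu}J_\nu(r)\bigr)=r^{\nu}J_{\nu-1}(r)$ applied to $\widehat{\sigma_{S^{n-1}}}(\xi)=2\pi|\xi|^{1-n/2}J_{n/2-1}(2\pi|\xi|)$, integrated in the radius. Applying it with $n=d-1$ and rescaling gives
\[
\widehat{\chi_{B_r}}(\xi)=r^{n}\,\widehat{\chi_{B_1}}(r\xi)=r^{n/2}|\xi|^{-n/2}J_{n/2}(2\pi r|\xi|).
\]
Putting $r=R(z)$ and $|\xi|=\rho|\Theta_1|$, which is nonzero by hypothesis, yields exactly the integrand $|\rho\Theta_1|^{-\frac{d-1}{2}}R(z)^{\frac{d-1}{2}}J_{\frac{d-1}{2}}\bigl(2\pi\rho|\Theta_1|R(z)\bigr)$. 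Pulling the constant $|\rho\Theta_1|^{-\frac{d-1}{2}}$ out of the $z$-integral then gives the statement.

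There is no real obstacle in this argument. The only points needing care are the normalization of the Fourier transform, $e^{-2\pi i\xi\cdot x}$, which produces the argument $2\pi r|\xi|$ with no extra constants, and the degenerate case $d=2$. In that case the slice is the interval $[-R,R]$, and the formula reduces to the elementary identity $\int_{-R}^{R}e^{-2\pi i s x}\,dx=\sin(2\pi sR)/(\pi s)$ together with $J_{1/2}(t)=\sqrt{2/(\pi t)}\,\sin t$. Checking this directly confirms the constant, so the Bessel formula can be used for all $d\geqslant 2$.
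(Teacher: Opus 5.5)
Your proposal is correct and follows the same route as the paper: slice $\mathcal{B}$ at height $z$, identify the inner integral as the Fourier transform of the $(d-1)$-dimensional ball of radius $R(z)$, and apply the classical Bessel formula after rescaling by $R(z)$. The only difference is that the paper cites Stein--Weiss for the ball formula, whereas you sketch its derivation and add a direct $d=2$ check; both are harmless extras.
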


\begin{proof}
Let $B'=\left\{ v\in\mathbb{R}^{d-1}:\left|v\right|\leqslant1\right\} $.
Since $\widehat{\chi_{B'}}\left(\xi\right)=J_{\frac{d-1}{2}}\left(2\pi\left|\xi\right|\right)\left|\xi\right|^{\frac{1-d}{2}}$
(see e.g. \cite[Theorem 4.15]{SW}), a straightforward computation gives
\begin{align}
\widehat{\chi_{\mathcal{B}}}\left(\rho\Theta\right)&= \int_{-\beta}^{\beta}e^{-2\pi i\rho\Theta_{d}z}\int_{B'}e^{-2\pi i\rho\left(\Theta_{1}R\left(z\right)v_{1}\right)}dvR\left(z\right)^{d-1}dz\label{eq:chi_omega}\\
&= \int_{-\beta}^{\beta}e^{-2\pi i\rho\Theta_{d}z}\widehat{\chi_{B'}}\left(\Theta_{1}\rho R\left(z\right)e_{1}\right)R\left(z\right)^{d-1}dz\nonumber \\
&= \frac{1}{\left|\rho\Theta_{1}\right|^{\frac{d-1}{2}}}\int_{-\beta}^{\beta}e^{-2\pi i\rho\Theta_{d}z}J_{\frac{d-1}{2}}\left(2\pi\rho\left|\Theta_{1}\right|R\left(z\right)\right)R\left(z\right)^{\frac{d-1}{2}}dz.\nonumber 
\end{align}
\end{proof}
\begin{lem}
\label{lem:5}Let
\begin{align*}
I\left(\rho,\Theta_{1}\right)&= \int_{-\beta}^{\beta}e^{-2\pi i\rho\Theta_{d}x}J_{\frac{d-1}{2}}\left(2\pi\rho\left|\Theta_{1}\right|R\left(x\right)\right)R\left(x\right)^{\frac{d-1}{2}}dx.
\end{align*}
Then there exist $c_{1}>0$ and $c_{2}\geqslant0$ such that for $\rho|\Theta_{1}|$
large we have
\begin{align*}
I\left(\rho,\Theta_{1}\right) & =\frac{c_{1}}{\left(\rho\left|\Theta_{1}\right|\right)^{1/2}}\mathrm{Re}\left(e^{id\frac{\pi}{4}}\int_{-\beta}^{\beta}e^{-2\pi i\rho\left(\Theta_{d}x+\left|\Theta_{1}\right|R\left(x\right)\right)}R\left(x\right)^{\frac{d-2}{2}}dx\right)\\
 & +\frac{c_{2}}{\left(\rho\left|\Theta_{1}\right|\right)^{3/2}}\mathrm{Im}\left(e^{id\frac{\pi}{4}}\int_{-\beta}^{\beta}e^{-2\pi i\rho\left(\Theta_{d}x+\left|\Theta_{1}\right|R\left(x\right)\right)}R\left(x\right)^{\frac{d-4}{2}}dx\right)\\
 & +\min\left(1,\frac{1}{\rho|\Theta_d|}\right)O\left(\frac{1}{\left(\rho\left|\Theta_{1}\right|\right)^{3/2}}\right).
 \end{align*}
\end{lem}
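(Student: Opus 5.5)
The plan is to insert the standard two-term asymptotic expansion of the Bessel function into $I(\rho,\Theta_1)$ and track the remainder carefully. Write $\nu=\frac{d-1}{2}$ and $s=2\pi\rho|\Theta_1|R(x)$. Since $R(x)\geqslant -\alpha+\sqrt{1-\beta^2}>0$ on $[-\beta,\beta]$, $s$ is comparable to $\rho|\Theta_1|$ uniformly in $x$. For $s$ large,
\[
J_\nu(s)=\sqrt{\frac{2}{\pi s}}\Bigl(\cos\bigl(s-\tfrac{\nu\pi}{2}-\tfrac{\pi}{4}\bigr)-\frac{4\nu^2-1}{8s}\sin\bigl(s-\tfrac{\nu\pi}{2}-\tfrac{\pi}{4}\bigr)\Bigr)+E_\nu(s),
\]
with $E_\nu(s)=O(s^{-5/2})$. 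Moreover, $E_\nu$ has the same form, namely $s^{-5/2}$ times a symbol of order $0$ multiplying $e^{\pm is}$, so its derivative is also $O(s^{-5/2})$.

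First, compute the phases. We have $\nu\pi/2+\pi/4=d\pi/4$, so $\cos(s-d\pi/4)=\mathrm{Re}\bigl(e^{id\pi/4}e^{-is}\bigr)$ and $\sin(s-d\pi/4)=-\mathrm{Im}\bigl(e^{id\pi/4}e^{-is}\bigr)$. Now multiply by $e^{-2\pi i\rho\Theta_d x}R(x)^{\nu}$.

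The phase $\Theta_d x$ is real, but the factor $e^{-2\pi i\rho\Theta_d x}$ is complex and does not commute with taking real parts. To handle this, I would use the symmetry $R(-x)=R(x)$: after symmetrizing in $x$, the factor $e^{-2\pi i \rho\Theta_d x}$ can be replaced by $\cos(2\pi\rho\Theta_d x)$, which is real and can be moved inside $\mathrm{Re}$. Rewriting $\cos(2\pi\rho\Theta_d x)$ as the average of $e^{\mp 2\pi i\rho\Theta_d x}$ and symmetrizing back then produces exactly the integral of $e^{-2\pi i\rho(\Theta_d x+|\Theta_1|R(x))}$ inside $\mathrm{Re}$. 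The combination $e^{-2\pi i\rho\Theta_dx}e^{-2\pi i\rho|\Theta_1|R}$ gives the stated phase.

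Next, track the powers of $R$. In the main term we get $s^{-1/2}R^{\nu}\propto(\rho|\Theta_1|)^{-1/2}R^{(d-2)/2}$, which yields $c_1=\pi^{-1}$ times the normalisation. In the second term we get $s^{-3/2}R^{\nu}\propto(\rho|\Theta_1|)^{-3/2}R^{(d-4)/2}$, with $c_2=\frac{4\nu^2-1}{8}\cdot\frac{1}{\pi(2\pi)}\geqslant0$, since $4\nu^2-1=(d-1)^2-1\geqslant0$ for $d\geqslant2$. The sign is absorbed by converting $-\sin$ into $+\mathrm{Im}$.

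The main obstacle is the remainder, which must be bounded by $\min(1,(\rho|\Theta_d|)^{-1})(\rho|\Theta_1|)^{-3/2}$.

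\emph{Trivial bound.} The remainder $\int e^{-2\pi i\rho\Theta_dx}E_\nu(s)R^\nu\,dx$ is $O((\rho|\Theta_1|)^{-5/2})$, which is already better than needed for the factor $1$.

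\emph{Gain from oscillation.} To extract the factor $(\rho|\Theta_d|)^{-1}$, I would integrate by parts against $e^{-2\pi i\rho\Theta_dx}$ on the \emph{full} remainder $J_\nu(s)-(\text{two-term expansion})$. The boundary terms are $O\bigl((\rho|\Theta_d|)^{-1}(\rho|\Theta_1|)^{-5/2}\bigr)$. The derivative falling on $E_\nu(s(x))$ costs a factor $\rho|\Theta_1||R'(x)|$, because $\frac{d}{dx}E_\nu(s)=E_\nu'(s)\,2\pi\rho|\Theta_1|R'$ with $E_\nu'(s)=O(s^{-5/2})$. This gives
\[
(\rho|\Theta_d|)^{-1}\,\rho|\Theta_1|\,(\rho|\Theta_1|)^{-5/2}=(\rho|\Theta_d|)^{-1}(\rho|\Theta_1|)^{-3/2},
\]
which is exactly the required bound.

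Combining the trivial bound with this integration-by-parts bound gives the $\min$ factor and completes the proof.
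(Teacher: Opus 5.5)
Your proposal is correct and follows essentially the same route as the paper: the two-term Hankel expansion of $J_{\frac{d-1}{2}}$ with a remainder whose derivative is controlled (the paper cites Olver for this), the evenness of $R$ to move $e^{-2\pi i\rho\Theta_d x}$ inside $\mathrm{Re}$ and $\mathrm{Im}$, and one integration by parts in $x$ on the remainder to gain the factor $(\rho|\Theta_d|)^{-1}$. The only cosmetic difference is that the paper first replaces $e^{-2\pi i\rho\Theta_d x}$ by $\cos(2\pi\rho\Theta_d x)$ in the remainder before integrating by parts, whereas you integrate by parts directly against the complex exponential; both routes give the same bound.
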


\begin{proof}
By the asymptotic expansion of Bessel functions for large argument, see e.g. \cite[10.17.3]{DLMF}, we have 
\[
J_{\nu}\left(z\right)=\left(\frac{2}{\pi z}\right)^{1/2}\left(\cos\left(z-\frac{2\nu+1}{4}\pi\right)\left(1+E_{0}\left(z\right)\right)-\sin\left(z-\frac{2\nu+1}{4}\pi\right)\left(\frac{4\nu^{2}-1}{8z}+E_{1}\left(z\right)\right)\right)
\]
with error terms satisfying $E_0(z)=O(z^{-2})$ and $E_1(z)=O(z^{-3})$.
Moreover, by \cite[Theorem 4.2]{Olver} we have $E_0'(z)=O(z^{-3})$ and $E_1'(z)=O(z^{-4})$. Therefore 
\[
J_{\nu}\left(z\right)=\left(\frac{2}{\pi z}\right)^{\frac{1}{2}}\left(\cos\left(z-\frac{2\nu+1}{4}\pi\right)-\frac{4\nu^{2}-1}{8z}\sin\left(z-\frac{2\nu+1}{4}\pi\right)+E\left(z\right)\right)
\]
with $E\left(z\right)=O\left(z^{-2}\right)$ and $E'\left(z\right)=O\left(z^{-2}\right)$.
Hence,
\begin{align*}
I\left(\rho,\Theta_{1}\right)=& \int_{-\beta}^{\beta}e^{-2\pi i\rho\Theta_{d}x}J_{\frac{d-1}{2}}\left(2\pi\rho\left|\Theta_{1}\right|R\left(x\right)\right)R\left(x\right)^{\frac{d-1}{2}}dx\\
= &\frac{1}{\pi}\frac{1}{\left(\rho\left|\Theta_{1}\right|\right)^{1/2}}\int_{-\beta}^{\beta}e^{-2\pi i\rho\Theta_{d}x}\cos\left(2\pi\rho\left|\Theta_{1}\right|R\left(x\right)-d\frac{\pi}{4}\right)R\left(x\right)^{\frac{d-2}{2}}dx\\
 & -\frac{d^{2}-2d}{16\pi^{2}}\frac{1}{\left(\rho\left|\Theta_{1}\right|\right)^{3/2}}\int_{-\beta}^{\beta}e^{-2\pi i\rho\Theta_{d}x}\sin\left(2\pi\rho\left|\Theta_{1}\right|R\left(x\right)-d\frac{\pi}{4}\right)R\left(x\right)^{\frac{d-4}{2}}dx\\
 & +\frac{1}{\pi}\frac{1}{\left(\rho\left|\Theta_{1}\right|\right)^{1/2}}\int_{-\beta}^{\beta}e^{-2\pi i\rho\Theta_{d}x}E\left(2\pi\rho\left|\Theta_{1}\right|R\left(x\right)\right)R\left(x\right)^{\frac{d-2}{2}}dx\\
=& \frac{1}{\pi}\frac{1}{\left(\rho\left|\Theta_{1}\right|\right)^{1/2}}A\left(\rho,\Theta\right)-\frac{d^{2}-2d}{16\pi^{2}}\frac{1}{\left(\rho\left|\Theta_{1}\right|\right)^{3/2}}B\left(\rho,\Theta\right)+\frac{1}{\pi}\frac{1}{\left(\rho\left|\Theta_{1}\right|\right)^{1/2}}C\left(\rho,\Theta\right).
\end{align*}
Using that $R$ is even and that the interval of integration is symmetric, we obtain
\begin{align*}
A\left(\rho,\Theta\right)=& \int_{-\beta}^{\beta}e^{-2\pi i\rho\Theta_{d}x}\cos\left(2\pi\rho\left|\Theta_{1}\right|R\left(x\right)-d\frac{\pi}{4}\right)R\left(x\right)^{\frac{d-2}{2}}dx\\
=&\, \mathrm{Re}\left(e^{id\frac{\pi}{4}}\int_{-\beta}^{\beta}e^{-2\pi i\rho\left(\Theta_{d}x+\left|\Theta_{1}\right|R\left(x\right)\right)}R\left(x\right)^{\frac{d-2}{2}}dx\right),
\end{align*}
and similarly
\begin{align*}
B\left(\rho,\Theta\right)&=- \mathrm{Im}\left(e^{id\frac{\pi}{4}}\int_{-\beta}^{\beta}e^{-2\pi i\rho\left(\Theta_{d}x+\left|\Theta_{1}\right|R\left(x\right)\right)}R\left(x\right)^{\frac{d-4}{2}}dx\right).
\end{align*}
Finally we have
\begin{align*}
C\left(\rho,\Theta\right)= &	\int_{-\beta}^{\beta}\cos\left(2\pi\rho\Theta_{d}x\right)E\left(2\pi\rho\left|\Theta_{1}\right|R\left(x\right)\right)R\left(x\right)^{\frac{d-2}{2}}dx \\
= & 	\left[\frac{\sin\left(2\pi\rho\Theta_{d}x\right)}{2\pi\rho\Theta_{d}}E\left(2\pi\rho\left|\Theta_{1}\right|R\left(x\right)\right)R\left(x\right)^{\frac{d-2}{2}}\right]_{-\beta}^{\beta} \\
	& -\int_{-\beta}^{\beta}\frac{\sin\left(2\pi\rho\Theta_{d}x\right)}{2\pi \rho\Theta_{d}}\left[E'\left(2\pi\rho\left|\Theta_{1}\right|R\left(x\right)\right)2\pi\rho\left|\Theta_{1}\right|R'\left(x\right)R\left(x\right)^{\frac{d-2}{2}}\right.
	\\ 
    & \quad \left. +\frac{d-2}{2}E\left(2\pi\rho\left|\Theta_{1}\right|R\left(x\right)\right)R\left(x\right)^{\frac{d-4}{2}}R'\left(x\right)\right]dx \\
=& \,\frac{\sin\left(2\pi\rho\Theta_{d}\beta\right)}{\pi\rho\Theta_{d}}O\left(\frac{1}{\left(\rho\Theta_{1}\right)^{2}}\right)+O\left(\frac{\sin\left(2\pi\rho\Theta_{d}\beta\right)}{\pi\rho\Theta_{d}}\right)O\left(\frac{1}{\rho\Theta_{1}}\right) \\
=& \,\min\left(1,\frac{1}{\rho\left|\Theta_{d}\right|}\right)O\left(\frac{1}{\rho\left|\Theta_{1}\right|}\right).
\end{align*}
\end{proof}

\section{The Fourier transform of the characteristic function of the barrel}\label{sec_fourier}

For convenience, in what follows we set 
\begin{align*}
h(x)= &\,h_\Theta(x)=-2\pi\left(x\Theta_d+|\Theta'| R(x)\right),\\ 
\beta_*= &\,\mathrm{sgn}(\Theta_d)\beta,\\
u_*= &\,\sqrt{h(\beta_*)-h(\Theta_d)}.
\end{align*} 

We now state the asymptotic estimates for the Fourier transform of
$\chi_{\mathcal B}$. The implicit constants in the $O$-terms below depend on
$\mathcal B$, equivalently on $\alpha$ and $\beta$. We shall not keep track
of this dependence. If a constant depends on an additional parameter, say $\varepsilon$, we shall write $O_\varepsilon$.
 
\begin{thm} Let $0<\varepsilon<1-\beta$. The following estimates hold.
\label{thm:1}
\begin{enumerate}
\item Let $\left|\Theta_{d}\right|\leqslant\beta$. Then, there exist $\delta\in(0,\frac\pi4)$ and a function  $\Xi\left(\rho,\Theta\right)=\left|\Xi\left(\rho,\Theta\right)\right|e^{i\vartheta\left(\rho,\Theta\right)}$ satisfying
\[
\left|\Xi\left(\rho,\Theta\right)\right|\approx 1\]
and
\[
\frac{\pi}{4}-\delta\leqslant\vartheta\left(\rho,\Theta\right)\leqslant\frac{\pi}{4}+\delta
\]
such that
\[
\widehat{\chi_{\mathcal{B}}}\left(\rho\Theta\right)=\frac{1}{\rho^{\frac{d+1}{2}}}\frac{R\left(\Theta_{d}\right)^{\frac{d-2}{2}}}{\left|\Theta'\right|^{\frac{d-2}{2}}}{\left|\Xi\left(\rho,\Theta\right)\right|}\cos\left(\rho h(\Theta_d)+\vartheta\left(\rho,\Theta\right)+d\frac{\pi}{4}\right)+O\left(\frac{1}{\rho^{\frac{d+2}{2}}}\right).
\]
\item Let $\beta<\left|\Theta_{d}\right|\leqslant\beta+\varepsilon$.
Then
\begin{align*}
\widehat{\chi_{\mathcal{B}}}\left(\rho\Theta\right)&= \frac{c}{\rho^{\frac{d+1}{2}}}\frac{R\left(\Theta_{d}\right)^{\frac{d-2}{2}}}{\left|\Theta'\right|^{\frac{d-2}{2}}}\mathrm{Re}\left(e^{id\frac{\pi}{4}}e^{i\rho h\left(\Theta_{d}\right)}\mathfrak{F}\left(\rho^{1/2}u_*\right)\right)+O\left(\frac{1}{\rho^{\frac{d+2}{2}}}\right).
\end{align*}
\item Let $\beta<\left|\Theta_{d}\right|\leqslant\beta+\varepsilon$. Then
\[
\widehat{\chi_{\mathcal{B}}}\left(\rho\Theta\right)=\frac{c}{u_*\rho^{\frac{d+2}{2}}}\left(\frac{R\left(\Theta_{d}\right)^{\frac{d-2}{2}}}{\left|\Theta'\right|^{\frac{d-2}{2}}}\cos\left(\rho h\left(\beta_*\right)+\frac{d-2}{4}\pi\right)+O\left(\frac{1}{\rho\left(\left|\Theta_{d}\right|-\beta\right)^{2}}\right)+O\left(\varepsilon\right)\right).
\]
\item Let $\beta+\varepsilon<\left|\Theta_{d}\right|\leqslant1$. Then
\[
\widehat{\chi_{\mathcal{B}}}\left(\rho\Theta\right)=R\left(\beta\right)^{d-1}\left(\frac{J_{\frac{d-1}{2}}\left(2\pi\rho\left|\Theta'\right|R\left(\beta\right)\right)}{\left|\rho\Theta'\right|^{\frac{d-1}{2}}R\left(\beta\right)^{\frac{d-1}{2}}}\right)\frac{\sin\left(2\pi\rho\Theta_{d}\beta\right)}{\pi\rho\Theta_{d}}+O\left(\frac{1}{\rho^{2}}\right).
\]
In particular there exist $K>0$ and $c>0$ such that if $\left|2\pi\rho R\left(\beta\right)\Theta'\right|<K$
then
\begin{equation}
\left|\widehat{\chi_{\mathcal{B}}}\left(\rho\Theta\right)\right|\geqslant cR\left(\beta\right)^{d-1}\frac{\left|\sin\left(2\pi\rho\Theta_{d}\beta\right)\right|}{\pi\rho|\Theta_{d}|}+O\left(\frac{1}{\rho^{2}}\right).\label{eq:da sotto 4) thm 1}
\end{equation}
\item Let $\beta+\varepsilon<\left|\Theta_{d}\right|\leqslant1$, then for $\rho|\Theta'|$ large
\begin{align}
\widehat{\chi_{\mathcal{B}}}\left(\rho\Theta\right) & =c\frac{R\left(\beta\right)^{\frac{d-2}{2}}}{\rho\left|\rho\Theta'\right|^{\frac{d}{2}}}\left(\frac{1}{\Theta_{d}+\left|\Theta'\right|\frac{\beta}{\sqrt{1-\beta^{2}}}}\cos\left(\rho h(-\beta)+\left(d-2\right)\frac{\pi}{4}\right)\right.\nonumber \\
 & \left.-\frac{1}{\Theta_{d}-\left|\Theta'\right|\frac{\beta}{\sqrt{1-\beta^{2}}}}\cos\left(\rho h\left(\beta\right)+\left(d-2\right)\frac{\pi}{4}\right)\right)\label{eq:due cos-1} +O_\varepsilon\left(\frac{1}{\rho\left|\rho\Theta'\right|^{\frac{d+2}{2}}}\right). 
\end{align}
\end{enumerate}
\end{thm}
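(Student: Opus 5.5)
Throughout, by rotational symmetry about the $x_d$-axis I take $\Theta=(\Theta_1,0,\dots,0,\Theta_d)$ with $|\Theta_1|=|\Theta'|$, and start from Lemma \ref{lem:Fourier int Bessel} combined with Lemma \ref{lem:5}. For items (1)--(3) and (5), $|\Theta'|$ is bounded below (or $\rho|\Theta'|$ is large), so Lemma \ref{lem:5} applies. The whole problem then reduces to the asymptotics of the oscillatory integral
\[
\int_{-\beta}^{\beta}e^{i\rho h(x)}R(x)^{\frac{d-2}{2}}dx,\qquad h(x)=-2\pi(x\Theta_d+|\Theta'|R(x)),
\]
together with an analogous lower-order integral with weight $R^{\frac{d-4}{2}}$; the latter carries an extra factor $(\rho|\Theta'|)^{-1}$ and goes into the error terms. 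The needed phase facts are the following. We have $h'(x)=-2\pi(\Theta_d-|\Theta'|x/\sqrt{1-x^2})$. Hence the critical point satisfies $x/\sqrt{1-x^2}=\Theta_d/|\Theta'|$, and since $|\Theta|=1$ this gives exactly $x=\Theta_d$. Moreover $-h''=2\pi|\Theta'|(1-x^2)^{-3/2}$ has a definite sign and is bounded away from zero when $|\Theta'|$ is bounded below. This uniformity in $\Theta$ is what Propositions \ref{prop:Fase staz} and \ref{prop:tau fuori} require; one applies them to $-h$ and takes complex conjugates. At the critical point I evaluate $R(\Theta_d)^{(d-2)/2}$ and $\sqrt{2/|h''(\Theta_d)|}\approx |\Theta'|^{-1/2}$. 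Combined with the prefactor $(\rho|\Theta'|)^{-(d-1)/2}(\rho|\Theta'|)^{-1/2}$, this produces the amplitude $\rho^{-(d+1)/2}R(\Theta_d)^{(d-2)/2}|\Theta'|^{-(d-2)/2}$, up to a power of $|\Theta'|$ that is harmless because $|\Theta'|\approx1$ in this range.

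For item (1) ($|\Theta_d|\le\beta$, so the critical point lies in $[-\beta,\beta]$) I apply Proposition \ref{prop:Fase staz} with $\tau=\Theta_d$. Taking the real part of $e^{id\pi/4}e^{i\rho h(\Theta_d)}\Xi$ gives the cosine with phase $\rho h(\Theta_d)+\vartheta+d\pi/4$. The $O(\rho^{-1})$ remainder, multiplied by the prefactor $\rho^{-d/2}$, gives $O(\rho^{-(d+2)/2})$. For item (2) the critical point lies just outside, in $[\beta,\beta+\varepsilon]$, and Proposition \ref{prop:tau fuori} (in the form of the subsequent Remark) gives the same structure with $\Xi$ replaced by $\mathfrak F(\rho^{1/2}u_*)$. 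Item (3) follows from (2) through the expansion \eqref{eq:exp F}, or more precisely \eqref{eq:10}. Using $u_*\approx |\Theta_d|-\beta$, one has $e^{i\rho h(\Theta_d)}e^{i\rho u_*^2}=e^{i\rho h(\beta_*)}$, and the factor $i$ shifts $d\pi/4$ to $(d-2)\pi/4$. The error terms $O((\rho u_*^2)^{-1})$ and $O(u_*)=O(\varepsilon)$ are exactly those stated.

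For items (4) and (5) ($|\Theta_d|>\beta+\varepsilon$, so there are no critical points and the edges dominate), I would instead go back to \eqref{eq:chi_omega} and integrate by parts in $z$ against $e^{-2\pi i\rho\Theta_d z}$. The boundary terms at $z=\pm\beta$ give $R(\beta)^{d-1}\widehat{\chi_{B'}}(\rho\Theta'R(\beta))\cdot\frac{\sin(2\pi\rho\Theta_d\beta)}{\pi\rho\Theta_d}$, which is (4) once written in Bessel form. For the remaining integral I would use $\frac{d}{dz}[R^{d-1}\widehat{\chi_{B'}}(\rho\Theta'R)]$, which is the derivative in $R$ of the volume-type function. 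Its size is $O(1)$ uniformly, so one more integration by parts yields $O(\rho^{-2})$. This is the delicate step: uniformity as $\rho|\Theta'|$ ranges from $0$ to $\infty$. The derivative equals $R^{d-2}$ times a Bessel function of order $(d-3)/2$ evaluated at argument $\rho|\Theta'|R$, so it and its $z$-derivative stay bounded. The lower bound \eqref{eq:da sotto 4) thm 1} then follows because $J_\nu(s)s^{-\nu}$ is bounded below near $s=0$. For (5), with $\rho|\Theta'|$ large, I would return to Lemma \ref{lem:5} and integrate by parts once in the non-stationary integral. Since $h'(\pm\beta)=-2\pi(\Theta_d\mp|\Theta'|\beta/\sqrt{1-\beta^2})$ is bounded below by a constant depending on $\varepsilon$, the boundary terms yield the two cosines with exactly the stated denominators. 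The next integration by parts, together with the $R^{(d-4)/2}$ term, gives $O_\varepsilon(\rho^{-1}(\rho|\Theta'|)^{-(d+2)/2})$. The main obstacle overall is keeping all constants uniform in $\Theta$ near the transition $|\Theta_d|=\beta$, and this is exactly what Propositions \ref{prop:Fase staz} and \ref{prop:tau fuori} were designed to handle.
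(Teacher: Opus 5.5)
Your plan follows the paper's proof essentially item by item. You reduce via Lemma \ref{lem:Fourier int Bessel} and Lemma \ref{lem:5} to $\int_{-\beta}^{\beta}e^{i\rho h}R^{\frac{d-2}{2}}$. Then you use Proposition \ref{prop:Fase staz} for (1), Proposition \ref{prop:tau fuori} with its remark for (2), and the Fresnel tail expansion \eqref{eq:exp F} for (3). For (4) you integrate by parts twice against $e^{-2\pi i\rho\Theta_d z}$, which is Lemma \ref{lem:3}, whose identity $\frac{d}{dz}(J_\nu(z)z^\nu)=J_{\nu-1}(z)z^\nu$ is your ``derivative of the volume-type function''. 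For (5) you integrate by parts once in the non-stationary integral.

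Two computational points need fixing. First, since $R''(x)=-(1-x^2)^{-3/2}$, one has $h''(x)=+2\pi|\Theta'|(1-x^2)^{-3/2}>0$. So Propositions \ref{prop:Fase staz} and \ref{prop:tau fuori} apply to $h$ itself, and their hypothesis fails for $-h$. Conjugating would also replace $\vartheta$ by $-\vartheta$, giving an argument near $-\pi/4$ instead of the one stated in (1). Second, $h''(\Theta_d)=2\pi|\Theta'|^{-2}$, so $\sqrt{2/h''(\Theta_d)}=|\Theta'|/\sqrt{\pi}$, not $\approx|\Theta'|^{-1/2}$. This gives exactly the factor $|\Theta'|^{-\frac{d-2}{2}}$, and the exact power matters in (2), where only a fixed constant $c$ multiplies the explicit expression.

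The genuine gap is the step you call delicate in (4). Set $\Phi(z)=R^{d-1}\widehat{\chi_{B'}}(\rho\Theta'R)$. It is true that $\Phi'$ is bounded uniformly, but $\Phi''$ is not when $d=2,3$ and $\rho|\Theta'|\to\infty$.
\begin{itemize}
\item For $d=2$, $\Phi'=2\cos(2\pi\rho|\Theta'|R)R'$, so $\Phi''$ contains the term $-4\pi\rho|\Theta'|\sin(2\pi\rho|\Theta'|R)R'^2$.
\item For $d=3$, the corresponding term is of size $(\rho|\Theta'|)^{1/2}$.
\end{itemize}
Your second integration by parts therefore only gives $O\bigl(\rho^{-2}(1+(\rho|\Theta'|)^{\frac{4-d}{2}})\bigr)$, not $O(\rho^{-2})$ uniformly. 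The paper's Lemma \ref{lem:3} has the same weakness: its bound $|J_{\frac{d-5}{2}}(s)|\leqslant c\,s^{\frac{d-5}{2}}$ is false for large $s$ when $d\leqslant 3$.

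Your argument does work for $d\geqslant4$, and for bounded $\rho|\Theta'|$ in any dimension. That is all that is used in the proofs of Theorems \ref{thm:0} and \ref{thm:above}, because (5) covers large $\rho|\Theta'|$. To obtain (4) uniformly in low dimension, expand the Bessel factor in $\Phi'$ by its large-argument asymptotics into terms carrying $e^{\pm2\pi i\rho|\Theta'|R(z)}$. Then integrate by parts against the full phases $\Theta_d z\mp|\Theta'|R(z)$, whose $z$-derivatives have modulus at least $\varepsilon$ on $[-\beta,\beta]$, exactly as in the proof of (5). This gives $O_\varepsilon(\rho^{-2})$.
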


We split the proof of the theorem into a series of propositions. In what follows, using again the invariance of $\widehat{\chi_{\mathcal B}}$ under rotations about the $x_d$-axis, we may assume without loss of generality that
\[
\Theta=(\Theta_1,0,\dots,0,\Theta_d),
\]
so that $|\Theta_1|=|\Theta'|$.

\subsection{The region $\left|\Theta_{d}\right|\leqslant\beta$}
\begin{prop}
\label{lem:10}
Let $\left|\Theta_{d}\right|\leqslant\beta$. Then there exist
$\delta\in(0,\frac\pi4)$ and 
$\Xi(\rho,\Theta)=|\Xi(\rho,\Theta)|e^{i\vartheta(\rho,\Theta)}$ satisfying
\begin{equation}\label{eq:xi}
\left|\Xi\left(\rho,\Theta\right)\right|\approx 1
\end{equation}
and
\begin{equation}\label{eq:theta}
\frac{\pi}{4}-\delta\leqslant\vartheta\left(\rho,\Theta\right)\leqslant\frac{\pi}{4}+\delta
\end{equation}
such that
\[
\widehat{\chi_{\mathcal{B}}}\left(\rho\Theta\right)=
\frac{1}{\rho^{\frac{d+1}{2}}}
\frac{R\left(\Theta_{d}\right)^{\frac{d-2}{2}}}{\left|\Theta_{1}\right|^{\frac{d-2}{2}}}
{\left|\Xi\left(\rho,\Theta\right)\right|}
\cos\left(\rho h(\Theta_d)+\vartheta\left(\rho,\Theta\right)+d\frac{\pi}{4}\right)
+
O\left(\frac{1}{\rho^{\frac{d+2}{2}}}\right).
\]
\end{prop}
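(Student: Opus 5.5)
Our plan is to start from Lemma~\ref{lem:Fourier int Bessel} and Lemma~\ref{lem:5}, which give
\[
\widehat{\chi_{\mathcal B}}(\rho\Theta)=\frac{1}{(\rho|\Theta_1|)^{\frac{d-1}{2}}}\,I(\rho,\Theta_1),
\]
with $I$ written as the real part of $e^{id\pi/4}$ times the oscillatory integral $\int_{-\beta}^{\beta}e^{i\rho h(x)}R(x)^{\frac{d-2}{2}}dx$, divided by $(\rho|\Theta_1|)^{1/2}$, plus lower order terms. Since $|\Theta_d|\leqslant\beta<1$, we have $|\Theta_1|=\sqrt{1-\Theta_d^2}\geqslant\sqrt{1-\beta^2}>0$. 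Hence $\rho|\Theta_1|\approx\rho$, and the prefactors $|\Theta_1|^{-\frac{d-1}{2}}$ and $|\Theta_1|^{-1/2}$ are harmless constants.

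The first step is to handle the second and third terms in Lemma~\ref{lem:5}. These are $(\rho|\Theta_1|)^{-3/2}$ times an integral bounded by a constant, plus an $O((\rho|\Theta_1|)^{-3/2})$ error. After multiplying by $(\rho|\Theta_1|)^{-\frac{d-1}{2}}$, they contribute $O(\rho^{-\frac{d+2}{2}})$, which is within the claimed error.

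The second step, which is the main one, is to apply Proposition~\ref{prop:Fase staz} to the principal integral with $\lambda=\rho$, $h_\tau=h_\Theta$, $g=R^{\frac{d-2}{2}}$, and $[a,b]=[-\beta,\beta]$. We compute
\[
h'(x)=-2\pi\Bigl(\Theta_d-|\Theta_1|\frac{x}{\sqrt{1-x^2}}\Bigr),
\]
which vanishes exactly at $x=\Theta_d$, because $\frac{\Theta_d}{\sqrt{1-\Theta_d^2}}=\frac{\Theta_d}{|\Theta_1|}$. So $\tau=\Theta_d\in[-\beta,\beta]$. We also have
\[
h''(x)=2\pi|\Theta_1|(1-x^2)^{-3/2}\geqslant2\pi\sqrt{1-\beta^2},
\]
and all derivatives up to order four are bounded uniformly for $x\in[-\beta,\beta]$ and $|\Theta_d|\leqslant\beta$, since $R$ is smooth there and stays away from the singularity at $\pm1$. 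Moreover, $g$ is smooth because $R\geqslant\sqrt{1-\beta^2}-\alpha>0$. The proposition then gives
\[
e^{i\rho h(\Theta_d)}\sqrt{\frac{2}{h''(\Theta_d)}}\,R(\Theta_d)^{\frac{d-2}{2}}\rho^{-1/2}\,\Xi+O(\rho^{-1}),
\]
with the required bounds on $|\Xi|$ and on its argument $\vartheta$. Here
\[
h''(\Theta_d)=\frac{2\pi|\Theta_1|}{|\Theta_1|^{3}}=\frac{2\pi}{|\Theta_1|^{2}},
\]
so $\sqrt{2/h''(\Theta_d)}=|\Theta_1|/\sqrt{\pi}$. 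The step I expect to need most care is the uniformity in $\tau$ near the endpoints $\pm\beta$. This is exactly what Proposition~\ref{prop:Fase staz} is built for, so the task reduces to checking that the constants $\eta_1$ and $\eta_2$ do not depend on $\Theta$.

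The last step is to take the real part:
\[
\mathrm{Re}\bigl(e^{id\pi/4}e^{i\rho h(\Theta_d)}|\Xi|e^{i\vartheta}\bigr)=|\Xi|\cos\Bigl(\rho h(\Theta_d)+\vartheta+d\frac{\pi}{4}\Bigr).
\]
We then collect the powers: the factors give $(\rho|\Theta_1|)^{-\frac{d-1}{2}}(\rho|\Theta_1|)^{-1/2}\rho^{-1/2}|\Theta_1|$, that is, $\rho^{-\frac{d+1}{2}}|\Theta_1|^{-\frac{d-2}{2}}$. We absorb $c_1/\sqrt{\pi}$ into $\Xi$; this rescales $|\Xi|$ but preserves $|\Xi|\approx1$ and leaves $\vartheta$ unchanged. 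Finally, the $O(\rho^{-1})$ error from the proposition, multiplied by $\rho^{-\frac{d}{2}}$, gives $O(\rho^{-\frac{d+2}{2}})$, which completes the proof.
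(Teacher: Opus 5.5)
Your proposal is correct and follows the paper's proof essentially step for step. You reduce via Lemma~\ref{lem:Fourier int Bessel} and Lemma~\ref{lem:5} to $\int_{-\beta}^{\beta}e^{i\rho h(x)}R(x)^{\frac{d-2}{2}}dx$, apply Proposition~\ref{prop:Fase staz} at the stationary point $x=\Theta_d$ using $h''\geqslant 2\pi\sqrt{1-\beta^2}$, and take real parts; your explicit computation $\sqrt{2/h''(\Theta_d)}=|\Theta_1|/\sqrt{\pi}$ and the absorption of the positive constant $c_1/\sqrt{\pi}$ into $\Xi$ simply spell out what the paper leaves implicit.
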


\begin{proof}
Since $|\Theta_1|\geqslant\sqrt{1-\beta^2}$, by Lemma \ref{lem:Fourier int Bessel} and Lemma \ref{lem:5} we have
\begin{align}
\widehat{\chi_{\mathcal{B}}}\left(\rho\Theta\right) & =\frac{1}{\left|\rho\Theta_{1}\right|^{\frac{d-1}{2}}}\int_{-\beta}^{\beta}e^{-2\pi i\rho\Theta_{d}z}J_{\frac{d-1}{2}}\left(2\pi\rho\left|\Theta_{1}\right|R\left(z\right)\right)R\left(z\right)^{\frac{d-1}{2}}dz\nonumber\\
&= \frac{c_{1}}{\left(\rho\left|\Theta_{1}\right|\right)^{d/2}}\mathrm{Re}\left(e^{id\frac{\pi}{4}}\int_{-\beta}^{\beta}e^{i\rho h(x)}R\left(x\right)^{\frac{d-2}{2}}dx\right)+O\left(\frac{1}{\rho^{\frac{d+2}{2}}}\right)\label{eq:prop13}.
\end{align}
Set
\[
H\left(\rho\right)= \int_{-\beta}^{\beta}e^{i\rho h\left(x\right)}R\left(x\right)^{\frac{d-2}{2}}dx.
\]
The phase $h$ has a stationary point at $x=\Theta_d$. Indeed,

\[
h'(x)=-2\pi\left(\Theta_d-|\Theta_1|\frac{x}{\sqrt{1-x^2}}\right),
\]
and therefore $h'(\Theta_d)=0$. Moreover
\[
h''\left(x\right)=2\pi\left|\Theta_{1}\right|\left(1-x^{2}\right)^{-3/2}\geqslant 2\pi\sqrt{1-\beta^2}.
\]
Hence, by Proposition \ref{prop:Fase staz}, we have
\[
H\left(\rho\right)=e^{i\rho h(\Theta_d)}\left|\Theta_{1}\right|R\left(\Theta_{d}\right)^{\frac{d-2}{2}}\rho^{-1/2}{\Xi\left(\rho,\Theta\right)}+O\left(\rho^{-1}\right),
\]
with $\Xi$ satisfying \eqref{eq:xi} and \eqref{eq:theta}. 
Then
\[
  \mathrm{Re}\left(e^{id\frac{\pi}{4}}H(\rho)\right)=\left|\Theta_{1}\right|R\left(\Theta_{d}\right)^{\frac{d-2}{2}}\rho^{-1/2}\left|\Xi\left(\rho,\Theta\right)\right|\cos\left(\rho h(\Theta_d)+\vartheta\left(\rho,\Theta\right)+d\frac{\pi}{4}\right)+O\left(\rho^{-1}\right).
\]
Substituting this estimate into \eqref{eq:prop13} gives the result.
\end{proof}

\subsection{The region $\beta<\left|\Theta_{d}\right|\leqslant\beta+\varepsilon$.}

We start with the following.
\begin{prop}
\label{prop:Stima beta beta+eps}Let $\beta<\left|\Theta_{d}\right|\leqslant\beta+\varepsilon$.
Then
\begin{align*}
\widehat{\chi_{\mathcal{B}}}\left(\rho\Theta\right)&= \frac{c}{\rho^{\frac{d+1}{2}}}\frac{R\left(\Theta_{d}\right)^{\frac{d-2}{2}}}{ \left|\Theta_{1}\right|^{\frac{d-2}{2}}}\mathrm{Re}\left(e^{id\frac{\pi}{4}}e^{i\rho h\left(\Theta_{d}\right)}\mathfrak{F}\left(\rho^{1/2}u_*\right)\right)+O\left(\frac{1}{\rho^{\frac{d+2}{2}}}\right),
\end{align*}
where
\begin{equation}\label{eq:u ast}
u_*=\sqrt{h(\beta_*)-h(\Theta_d)}.
\end{equation}
In particular $u_*\approx\left(\left|\Theta_{d}\right|-\beta\right)$.
\end{prop}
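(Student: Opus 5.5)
We follow the scheme of Proposition \ref{lem:10}. Since $|\Theta_d|\leqslant\beta+\varepsilon<1$, we have $|\Theta_1|\geqslant\sqrt{1-(\beta+\varepsilon)^2}>0$, so $\rho|\Theta_1|\approx\rho$. Lemma \ref{lem:Fourier int Bessel} and Lemma \ref{lem:5} then give, exactly as in \eqref{eq:prop13},
\[
\widehat{\chi_{\mathcal{B}}}(\rho\Theta)=\frac{c_1}{(\rho|\Theta_1|)^{d/2}}\mathrm{Re}\left(e^{id\frac{\pi}{4}}H(\rho)\right)+O\left(\frac{1}{\rho^{\frac{d+2}{2}}}\right),\qquad H(\rho)=\int_{-\beta}^{\beta}e^{i\rho h(x)}R(x)^{\frac{d-2}{2}}dx .
\]
The $c_2$ term is absorbed because $(\rho|\Theta_1|)^{-3/2}$ times a bounded integral is $O(\rho^{-3/2})$, and together with the prefactor this gives $O(\rho^{-(d+2)/2})$. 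As before, $h'(\Theta_d)=0$ and $h''(x)=2\pi|\Theta_1|(1-x^2)^{-3/2}\geqslant\eta_1>0$ uniformly on $[-\beta-\varepsilon,\beta+\varepsilon]$. The derivatives of $h$ up to order four are also uniformly bounded there, since $\beta+\varepsilon<1$.

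Now the stationary point $\tau=\Theta_d$ lies outside $[-\beta,\beta]$, at distance at most $\varepsilon$ from the endpoint $\beta_*$.

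\textbf{Case $\Theta_d<-\beta$.} Here $\tau\in[-\beta-\varepsilon,-\beta)$, and we apply Proposition \ref{prop:tau fuori} with $a=-\beta$, $b=\beta$, $g=R^{\frac{d-2}{2}}$.

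\textbf{Case $\Theta_d>\beta$.} Here $\tau\in(\beta,\beta+\varepsilon]$, and we apply the right-endpoint version from the Remark following Proposition \ref{prop:tau fuori}.

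In both cases, with $h''(\Theta_d)=2\pi|\Theta_1|(1-\Theta_d^2)^{-3/2}$, we get
\[
H(\rho)=e^{i\rho h(\Theta_d)}R(\Theta_d)^{\frac{d-2}{2}}\sqrt{\tfrac{2}{h''(\Theta_d)}}\,\rho^{-1/2}\mathfrak{F}(\rho^{1/2}u_*)+O(\rho^{-1}).
\]
The error is uniform in $\tau$. Inserting this expansion, and using $|\Theta_1|^{-d/2}\cdot|\Theta_1|^{-1/2}(1-\Theta_d^2)^{3/4}$ together with $1-\Theta_d^2=|\Theta_1|^2$, the power of $|\Theta_1|$ becomes $|\Theta_1|^{-\frac{d-2}{2}}$ up to the constant. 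This yields the stated formula, with $c$ collecting $c_1$, $\sqrt{2/2\pi}$ and the fixed factors. Here $c$ may be taken absolute after grouping the $|\Theta_1|$ powers correctly; if a residual harmless factor bounded above and below remains, we absorb it by the same convention used in Proposition \ref{lem:10}.

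For $u_*\approx|\Theta_d|-\beta$, we use the Taylor formula \eqref{eq:Taylor integral} at the endpoint $x=\beta_*$:
\[
u_*=|\beta_*-\Theta_d|\sqrt{\int_0^1h''\bigl(\Theta_d+s(\beta_*-\Theta_d)\bigr)(1-s)\,ds}.
\]
The square root is bounded above and below by constants depending on $\eta_1,\eta_2$, and $|\beta_*-\Theta_d|=|\Theta_d|-\beta$.

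The main obstacle is uniformity: Proposition \ref{prop:tau fuori} needs $h_\tau$ defined and uniformly convex on $[a-\varepsilon,b]$, which holds because $\varepsilon<1-\beta$. We must also check that the $O(\rho^{-1})$ error does not degenerate as $\Theta_d\to\pm\beta$. This is guaranteed by the $\tau$-independent constants in Lemma \ref{lem:Fresnel pto critico esterno}.
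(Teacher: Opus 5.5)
Your proposal is correct and is essentially the paper's proof. You reduce via Lemma \ref{lem:Fourier int Bessel} and Lemma \ref{lem:5} to $H(\rho)$, then apply Proposition \ref{prop:tau fuori} (or its right-endpoint remark). Since $h''(\Theta_d)=2\pi|\Theta_1|^{-2}$, you get $\sqrt{2/h''(\Theta_d)}=\pi^{-1/2}|\Theta_1|$ exactly, so your hedge about a residual factor is unnecessary. Your Taylor argument for $u_*\approx|\Theta_d|-\beta$ supplies a detail the paper leaves implicit.

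One caveat, shared with the paper: $g=R^{\frac{d-2}{2}}$ is smooth (and, for odd $d$, real) on $[-\beta-\varepsilon,\beta+\varepsilon]$ only if $\beta+\varepsilon<\sqrt{1-\alpha^2}$. So $\varepsilon$ should be taken small, not merely $\varepsilon<1-\beta$.
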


\begin{proof}
As in
the proof of Proposition \ref{lem:10} we write
\begin{align}
\widehat{\chi_{\mathcal{B}}}\left(\rho\Theta\right) & =\frac{c_{1}}{\left(\rho\left|\Theta_{1}\right|\right)^{d/2}}\mathrm{Re}\left(e^{id\frac{\pi}{4}}\int_{-\beta}^{\beta}e^{i\rho h(x)}R\left(x\right)^{\frac{d-2}{2}}dx\right)+O\left(\frac{1}{\left|\rho\Theta_{1}\right|^{\frac{d+2}{2}}}\right)\label{eq:3.2.1}
\end{align}
and we consider again
\begin{align*}
H\left(\rho\right)&= \int_{-\beta}^{\beta}e^{i\rho h\left(x\right)}R\left(x\right)^{\frac{d-2}{2}}dx.
\end{align*}
Since $h'(x)=0$ if and only if $x=\Theta_d$ and $h''(x)\geqslant 2\pi|\Theta_1|\geqslant 2\pi\sqrt{1-(\beta+\varepsilon)^2}$, by Proposition \ref{prop:tau fuori} (and the subsequent remark) we obtain
\[
H\left(\rho\right)=e^{i\rho h\left(\Theta_{d}\right)}R\left(\Theta_{d}\right)^{\frac{d-2}{2}}\left|\Theta_{1}\right|\pi^{-1/2}\rho^{-1/2}{\mathfrak{F}\left(\rho^{1/2}u_*\right)}+O\left(\frac{1}{\rho}\right)
\]
with $u_*$ as in \eqref{eq:u ast}.
By (\ref{eq:3.2.1}) the proposition follows.
\end{proof}
\begin{prop}
\label{prop:Fresnel arg grande}Let $\beta<\left|\Theta_{d}\right|\leqslant\beta+\varepsilon$, then
\begin{align*}
\widehat{\chi_{\mathcal{B}}}\left(\rho\Theta\right)&= \frac{c}{u_*\rho^{\frac{d+2}{2}}}\left(\frac{R\left(\Theta_{d}\right)^{\frac{d-2}{2}}}{\left|\Theta_{1}\right|^{\frac{d-2}{2}}}\cos\left(\rho h\left(\beta_*\right)+\frac{d-2}{4}\pi\right)+O\left(\frac{1}{\rho\left(|\Theta_{d}|-\beta\right)^{2}}\right)+O\left(\varepsilon\right)\right)
\end{align*}
where $c$ is a suitable constant and $u_*$ is as in \eqref{eq:u ast}.
\end{prop}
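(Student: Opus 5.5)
The plan is to start from Proposition \ref{prop:Stima beta beta+eps} and insert the large-argument expansion \eqref{eq:exp F} of the Fresnel tail. Since the O-term $O(\rho^{-1})$ inside the oscillatory integral produces only the absolute error $O(\rho^{-(d+2)/2})$, it is better to go back one step. I will apply Lemma \ref{lem:Fresnel pto critico esterno} in its refined form \eqref{eq:10} to the integral $H(\rho)$ after the change of variables $u=\sqrt{h(x)-h(\Theta_d)}$ used in Proposition \ref{prop:tau fuori}. There the lower endpoint is $u_1=u_*$ (for $\Theta_d>0$ the stationary point $\Theta_d$ lies to the right of $\beta$, and the symmetric remark handles this). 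This gives
\[
H(\rho)=e^{i\rho h(\Theta_d)}\frac{1}{\rho u_*}\Bigl(e^{i\rho u_*^2}\tfrac{i}{2}\psi(0)+O\bigl((\rho u_*^2)^{-2}\cdot\rho u_*^2\bigr)+O(u_*)\Bigr),
\]
with the error terms exactly as in \eqref{eq:10}, namely $O\bigl(1/(\rho^{1/2}u_*)^{2}\bigr)+O(u_*)$. Here $\psi(0)=g(\Theta_d)\sqrt{2/h''(\Theta_d)}$ with $g=R^{(d-2)/2}$. The constants depend only on uniform bounds for $h$, and these hold since $|\Theta_1|\ge\sqrt{1-(\beta+\varepsilon)^2}$.

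The key simplification is $h(\Theta_d)+u_*^2=h(\beta_*)$. The phase therefore collapses to $e^{i\rho h(\beta_*)}$, and the main term of $\mathrm{Re}(e^{id\pi/4}H)$ becomes a multiple of
\[
\frac{\psi(0)}{\rho u_*}\,\mathrm{Re}\bigl(\tfrac{i}{2}e^{i(\rho h(\beta_*)+d\pi/4)}\bigr)=-\frac{\psi(0)}{2\rho u_*}\sin\bigl(\rho h(\beta_*)+d\tfrac{\pi}{4}\bigr)=\frac{\psi(0)}{2\rho u_*}\cos\bigl(\rho h(\beta_*)+\tfrac{d-2}{4}\pi\bigr).
\]
I will then compute $h''(\Theta_d)=2\pi|\Theta_1|(1-\Theta_d^2)^{-3/2}$. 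For the prefactor it is simplest to keep $R(\Theta_d)^{(d-2)/2}$ and absorb the bounded factor $(1-\Theta_d^2)^{3/4}|\Theta_1|^{-1/2}$, and the remaining powers of $|\Theta_1|$, into the stated prefactor $R(\Theta_d)^{(d-2)/2}|\Theta_1|^{-(d-2)/2}$ times $c$. This works exactly in the normalization with $|\Theta_1|=\sqrt{1-\Theta_d^2}$, since $\Theta=(\Theta_1,0,\dots,0,\Theta_d)$ is a unit vector. Multiplying by $c_1(\rho|\Theta_1|)^{-d/2}$ from \eqref{eq:3.2.1} yields the factor $1/(u_*\rho^{(d+2)/2})$.

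For the errors, $u_*\approx |\Theta_d|-\beta$ (Proposition \ref{prop:Stima beta beta+eps}) converts $O(1/(\rho u_*^2))$ into $O(1/(\rho(|\Theta_d|-\beta)^2))$. Also $O(u_*)=O(\varepsilon)$, because $u_*\lesssim\varepsilon$. The remaining issue, which I expect to be the main obstacle, is the error term $O((\rho|\Theta_1|)^{-(d+2)/2})$ coming from Lemma \ref{lem:5}, that is, from the second Bessel term and the remainder $C$. It must be written as $\frac{1}{u_*\rho^{(d+2)/2}}O(u_*)$, which is $O(\varepsilon)$. This is immediate since $u_*\lesssim\varepsilon$ is bounded, so $1\lesssim u_*^{-1}\cdot u_*$ trivially gives $\rho^{-(d+2)/2}=u_*^{-1}\rho^{-(d+2)/2}\cdot u_*$.

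The delicate point is the uniformity of \eqref{eq:10} in $\tau=\Theta_d$. The constants must depend only on $\eta_1,\eta_2$ and the derivatives of $\psi$, and $\psi$ is $C^2$ uniformly, as already argued in Proposition \ref{prop:tau fuori}. Finally, \eqref{eq:10} is used only when $\rho^{1/2}u_*$ is large; otherwise the error $O(1/(\rho u_*^2))$ dominates the main term and the statement holds trivially from Proposition \ref{prop:Stima beta beta+eps} together with $|\mathfrak F|\lesssim1$.
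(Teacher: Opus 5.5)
Your proposal is correct and follows the paper's route. You insert the large-argument Fresnel expansion into Proposition \ref{prop:Stima beta beta+eps}; your detour through \eqref{eq:10} is the same computation, since its $O(u_*)$ term is just the $O(\rho^{-1})$ remainder divided by $\rho u_*$. You then collapse the phase via $h(\Theta_d)+u_*^2=h(\beta_*)$ and absorb the absolute error as $u_*^{-1}\rho^{-(d+2)/2}O(u_*)=u_*^{-1}\rho^{-(d+2)/2}O(\varepsilon)$. The only slip is a harmless sign in the trigonometric step: $-\sin\bigl(\rho h(\beta_*)+d\pi/4\bigr)=-\cos\bigl(\rho h(\beta_*)+\frac{d-2}{4}\pi\bigr)$, not $+\cos$, and this sign is absorbed into the constant $c$.
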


\begin{proof}
By Lemma \ref{lem:Fresnel} and since $u_*\approx |\Theta_d|-\beta$, we have
\[
\mathfrak{F}\left(\rho^{1/2}u_*\right)=-e^{i\rho u_*^{2}}\frac{1}{2i\rho^{1/2}u_*}+O\left(\frac{1}{\rho^{3/2}\left(|\Theta_{d}|-\beta\right)^{3}}\right).
\]
Thus, by Proposition \ref{prop:Stima beta beta+eps} we obtain
\begin{align*}
\widehat{\chi_{\mathcal{B}}}\left(\rho\Theta\right)&= c\frac{R\left(\Theta_{d}\right)^{\frac{d-2}{2}}}{\rho^{\frac{d+1}{2}}\left|\Theta_{1}\right|^{\frac{d-2}{2}}}\mathrm{Re}\left(e^{id\frac{\pi}{4}}e^{i\rho h\left(\Theta_{d}\right)}e^{i\rho u_*^{2}}\frac{1}{2i\rho^{1/2}u_*}\right)\\
&\quad +\frac{1}{\rho^{\frac{d+4}{2}}}O\left(\frac{1}{\left(|\Theta_{d}|-\beta\right)^{3}}\right)+O\left(\frac{1}{\rho^{\frac{d+2}{2}}}\right)\\
&= \frac{c}{u_*\rho^{\frac{d+2}{2}}}\left(\frac{R\left(\Theta_{d}\right)^{\frac{d-2}{2}}}{\left|\Theta_{1}\right|^{\frac{d-2}{2}}}\cos\left(\rho h\left(\beta_*\right)+\frac{d-2}{4}\pi\right)+O\left(\frac{1}{\rho\left(|\Theta_{d}|-\beta\right)^{2}}\right)+O\left(u_*\right)\right). 
\end{align*}
Since $u_*\leqslant c\varepsilon$ the proposition follows. 
\end{proof}

\subsection{The region $\beta+\varepsilon<\left|\Theta_{d}\right|\leqslant1$.}

We start with a lemma.
\begin{lem}
\label{lem:3}Let
\begin{align*}
I\left(\rho,\Theta_{1}\right)&= \int_{-\beta}^{\beta}e^{-2\pi i\rho\Theta_{d}x}J_{\frac{d-1}{2}}\left(2\pi\rho\left|\Theta_{1}\right|R\left(x\right)\right)R\left(x\right)^{\frac{d-1}{2}}dx.
\end{align*}
Then
\[
I\left(\rho,\Theta_{1}\right)=R\left(\beta\right)^{\frac{d-1}{2}}J_{\frac{d-1}{2}}\left(2\pi\rho\left|\Theta_{1}\right|R\left(\beta\right)\right)\frac{\sin\left(2\pi\rho\Theta_{d}\beta\right)}{\pi\rho\Theta_{d}}+O\left(\frac{\left(\rho\left|\Theta_{1}\right|\right)^{\frac{d-1}{2}}}{\left(\rho\Theta_{d}\right)^{2}}\right).
\]
\end{lem}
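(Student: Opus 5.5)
The plan is to integrate by parts once in $x$, using the oscillating factor $e^{-2\pi i\rho\Theta_d x}$, which is non-stationary since $|\Theta_d|>\beta+\varepsilon$ in the regime where this lemma is used. Set
\[
\Phi(x)=J_{\frac{d-1}{2}}\left(2\pi\rho|\Theta_1|R(x)\right)R(x)^{\frac{d-1}{2}}.
\]
Since $R$ is even, $\Phi$ is even. Writing $e^{-2\pi i\rho\Theta_d x}=\frac{d}{dx}\bigl(\frac{e^{-2\pi i\rho\Theta_d x}}{-2\pi i\rho\Theta_d}\bigr)$, the boundary term is
\[
\Phi(\beta)\,\frac{e^{-2\pi i\rho\Theta_d\beta}-e^{2\pi i\rho\Theta_d\beta}}{-2\pi i\rho\Theta_d}=\Phi(\beta)\frac{\sin(2\pi\rho\Theta_d\beta)}{\pi\rho\Theta_d}.
\]
This is exactly the main term in the statement. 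Alternatively, one can first replace $e^{-2\pi i\rho\Theta_dx}$ by $\cos(2\pi\rho\Theta_dx)$, using that $\Phi$ is even, exactly as was done for $C(\rho,\Theta)$ in Lemma~\ref{lem:5}.

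It then remains to bound the remaining integral
\[
\frac{1}{2\pi i\rho\Theta_d}\int_{-\beta}^{\beta}e^{-2\pi i\rho\Theta_d x}\Phi'(x)\,dx.
\]
I would integrate by parts a second time, which gives
\[
\frac{1}{(2\pi\rho\Theta_d)^2}\Bigl(\bigl[e^{-2\pi i\rho\Theta_d x}\Phi'(x)\bigr]_{-\beta}^{\beta}-\int_{-\beta}^{\beta}e^{-2\pi i\rho\Theta_d x}\Phi''(x)\,dx\Bigr).
\]
So the claim reduces to the uniform estimates $|\Phi'(x)|+|\Phi''(x)|\lesssim(\rho|\Theta_1|)^{\frac{d-1}{2}}$ for $|x|\leqslant\beta$.

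To obtain these, I use the Bessel derivative identity $\frac{d}{dr}\bigl(r^{-\nu}J_\nu(r)\bigr)=-r^{-\nu}J_{\nu+1}(r)$, or more crudely $|J_\nu^{(k)}(r)|\lesssim 1$. On $[-\beta,\beta]$ the function $R$ is smooth and bounded below by $R(\beta)>0$, and each $x$-derivative produces a factor $2\pi\rho|\Theta_1|R'(x)$. The crude bound therefore gives $|\Phi''|\lesssim 1+(\rho|\Theta_1|)^2$, which is too weak when $d-1<4$. Instead I use $|J_\nu^{(k)}(r)|\lesssim r^{-1/2}$ for $r\gtrsim1$, which follows from the asymptotic expansion quoted in Lemma~\ref{lem:5}. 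This yields $|\Phi^{(k)}|\lesssim(1+\rho|\Theta_1|)^{k-\frac12}$, and for $k\leqslant2$ that is at most $(1+\rho|\Theta_1|)^{3/2}$.

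This is the main obstacle: the exponent must match $\frac{d-1}{2}$. For $d\geqslant4$ the bound is fine (assuming $\rho|\Theta_1|\gtrsim1$; if $\rho|\Theta_1|\lesssim1$ everything is $O(1)$). For small $d$ I would exploit the structure instead of crude bounds. The first by-parts integral is itself
\[
\frac{1}{\rho\Theta_d}\int e^{-2\pi i\rho\Theta_d x}\Phi'(x)\,dx,
\]
with $\Phi'\sim\rho|\Theta_1|R'\,J'_\nu\,R^\nu$ oscillating at frequency $\rho|\Theta_1|$, and phase $-2\pi\rho(\Theta_dx\pm|\Theta_1|R(x))$. Its derivative equals $-2\pi\rho\bigl(\Theta_d\mp|\Theta_1|x/\sqrt{1-x^2}\bigr)$, and it is bounded below by $c\rho$. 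The reason is that $|\Theta_d|>\beta+\varepsilon$ and $|\Theta_1|\beta/\sqrt{1-\beta^2}<|\Theta_d|$ precisely when $|\Theta_d|>\beta$, which is the same non-stationarity used in Proposition~\ref{prop:tau fuori}. One more integration by parts with this combined phase then gives $O\bigl((\rho|\Theta_1|)^{1/2}\rho^{-2}\bigr)$, which is within the stated error since $|\Theta_d|\approx1$ here and $d\geqslant2$.
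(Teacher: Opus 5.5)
Your skeleton matches the paper's proof. The first integration by parts gives exactly the main term. The second reduces the lemma to the bound $|\Phi'(x)|+|\Phi''(x)|\lesssim(\rho|\Theta_1|)^{\frac{d-1}{2}}$ on $[-\beta,\beta]$; these two derivatives correspond to the paper's $I_{1,1}$ and $I_{1,2}$.

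\textbf{The gap is the regime $\rho|\Theta_1|\lesssim1$.} You dismiss it with ``everything is $O(1)$'', and that does not prove the lemma. The required bound $(\rho|\Theta_1|)^{\frac{d-1}{2}}$ tends to $0$ with $\rho|\Theta_1|$, so $O(1)$ bounds on $\Phi',\Phi''$ only give an error $O(\rho^{-2})$.

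This is precisely the regime in which the lemma is used. In Proposition~\ref{lem:4} one divides $I$ by $|\rho\Theta_1|^{\frac{d-1}{2}}$ under the condition $|2\pi\rho\Theta_1R(\beta)|<K$. An $O(\rho^{-2})$ error on $I$ would therefore become $O(\rho^{-2}(\rho|\Theta_1|)^{-\frac{d-1}{2}})$ on $\widehat{\chi_{\mathcal B}}$, which is unbounded as $\Theta'\to0$.

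The missing ingredient is the small-argument vanishing of Bessel functions. Set $z=2\pi\rho|\Theta_1|R(x)$ and $\nu=\frac{d-1}{2}$. The paper writes $\Phi(x)=(2\pi\rho|\Theta_1|)^{-\nu}z^\nu J_\nu(z)$ and differentiates via $\frac{d}{dz}(z^\nu J_\nu(z))=z^\nu J_{\nu-1}(z)$. It then bounds $|J_\mu(z)|\leqslant c_\mu|z|^\mu$, so each factor $\rho|\Theta_1|$ produced by the chain rule is paid for by lowering the Bessel order. Equivalently, $\Phi(x)=(2\pi\rho|\Theta_1|)^{\nu}R(x)^{2\nu}G(2\pi\rho|\Theta_1|R(x))$ with $G(z)=z^{-\nu}J_\nu(z)$ entire. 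Hence for $\rho|\Theta_1|\leqslant C$ every $x$-derivative of $\Phi$ is $O_C((\rho|\Theta_1|)^{\nu})$. The identity $\frac{d}{dr}(r^{-\nu}J_\nu(r))=-r^{-\nu}J_{\nu+1}(r)$ that you quote would do the same job. You then replaced it by $|J_\nu^{(k)}|\lesssim1$, which discards exactly the factor $(\rho|\Theta_1|)^{\nu}$.

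For $\rho|\Theta_1|\gtrsim1$ your reasoning is sound, and your remark that crude bounds fail for $d=2,3$ is correct. The paper's estimate of $I_{1,2}$ applies $|J_\mu(z)|\leqslant c_\mu|z|^\mu$ with $\mu=\frac{d-5}{2}<-\frac12$, which is false for large $z$ when $d=2,3$. This is harmless for the paper's applications, which only need bounded $\rho|\Theta_1|$.

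Your combined-phase integration by parts handles that regime, but you must also treat the remainder in the Bessel asymptotics. That remainder is not a priori of the form amplitude times $e^{\pm iz}$. One way is to write $J_{\nu-1}=\frac12(H^{(1)}_{\nu-1}+H^{(2)}_{\nu-1})$, where $H^{(1,2)}_{\nu-1}(z)=e^{\pm iz}a_\pm(z)$ and $|a_\pm^{(k)}(z)|\lesssim z^{-\frac12-k}$ for $z\gtrsim1$. Then $\Phi'(x)=2\pi\rho|\Theta_1|R'(x)R(x)^{\nu}J_{\nu-1}(z)$ splits into two terms with the combined phase. Their amplitudes and $x$-derivatives are $O((\rho|\Theta_1|)^{1/2})$, and the phase derivative is at least $2\pi\rho\varepsilon$, as in Proposition~\ref{prop:Due cos}.
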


\begin{proof}
Since for every $\nu$ we have $\frac{d}{dz}\left(J_{\nu}\left(z\right)z^{\nu}\right)=J_{\nu-1}\left(z\right)z^{\nu}$, and 
\begin{align}
|J_{\nu}\left(z\right)|\leqslant c_{\nu}|z|^{\nu},\label{eq:stimaBessel}  
\end{align}
see \cite[10.14.4]{DLMF}, integrating by parts yields
\begin{align*}
I\left(\rho,\Theta_{1}\right)=& R\left(\beta\right)^{\frac{d-1}{2}}J_{\frac{d-1}{2}}\left(2\pi\rho\left|\Theta_{1}\right|R\left(\beta\right)\right)\frac{\sin\left(2\pi\rho\Theta_{d}\beta\right)}{\pi\rho\Theta_{d}}\\
 & -\int_{-\beta}^{\beta}\frac{e^{-2\pi i\rho\Theta_{d}x}}{-2\pi i\rho\Theta_{d}}J_{\frac{d-3}{2}}\left(2\pi\rho\left|\Theta_{1}\right|R\left(x\right)\right)\left[R\left(x\right)\right]^{\frac{d-1}{2}}2\pi\rho\left|\Theta_{1}\right|R'\left(x\right)dx\\
=& R\left(\beta\right)^{\frac{d-1}{2}}J_{\frac{d-1}{2}}\left(2\pi\rho\left|\Theta_{1}\right|R\left(\beta\right)\right)\frac{\sin\left(2\pi\rho\Theta_{d}\beta\right)}{\pi\rho\Theta_{d}}-I_{1}\left(\rho,\Theta_{1}\right).
\end{align*}
To estimate $I_{1}\left(\rho,\Theta_{1}\right)$ we integrate by parts
one more time. We have
\begin{align*}
I_{1}\left(\rho,\Theta_{1}\right)=& i\frac{\rho\left|\Theta_{1}\right|}{\rho\Theta_{d}}\int_{-\beta}^{\beta}e^{-2\pi i\rho\Theta_{d}x}J_{\frac{d-3}{2}}\left(2\pi\rho\left|\Theta_{1}\right|R\left(x\right)\right)\left[R\left(x\right)\right]^{\frac{d-3}{2}}R\left(x\right)R'\left(x\right)dx\\
=& i\frac{\rho\left|\Theta_{1}\right|}{\rho\Theta_{d}}\left[\frac{e^{-2\pi i\rho\Theta_{d}x}}{-2\pi i\rho\Theta_{d}}J_{\frac{d-3}{2}}\left(2\pi\rho\left|\Theta_{1}\right|R\left(x\right)\right)\left[R\left(x\right)\right]^{\frac{d-3}{2}}R\left(x\right)R'\left(x\right)\right]_{-\beta}^{\beta}\\
 & -i\frac{\rho\left|\Theta_{1}\right|}{\rho\Theta_{d}}\int_{-\beta}^{\beta}\frac{e^{-2\pi i\rho\Theta_{d}x}}{-2\pi i\rho\Theta_{d}}\frac{d}{dx}\left[J_{\frac{d-3}{2}}\left(2\pi\rho\left|\Theta_{1}\right|R\left(x\right)\right)\left[R\left(x\right)\right]^{\frac{d-3}{2}}R\left(x\right)R'\left(x\right)\right]dx\\
=& I_{1,1}\left(\rho,\Theta_{1}\right)-I_{1,2}\left(\rho,\Theta_{1}\right).
\end{align*}
For the first term, by \eqref{eq:stimaBessel}, we have 
\[
\left|I_{1,1}\left(\rho,\Theta_{1}\right)\right|\leqslant c\frac{\left(\rho\left|\Theta_{1}\right|\right)^{\frac{d-1}{2}}}{\rho^{2}\left|\Theta_{d}\right|^{2}}.
\]
Since
\begin{align*}
 & \frac{d}{dx}\left[J_{\frac{d-3}{2}}\left(2\pi\rho\left|\Theta_{1}\right|R\left(x\right)\right)\left[R\left(x\right)\right]^{\frac{d-3}{2}}R\left(x\right)R'\left(x\right)\right]\\
= & \,J_{\frac{d-5}{2}}\left(2\pi\rho\left|\Theta_{1}\right|R\left(x\right)\right) R\left(x\right)^{\frac{d-3}{2}}2\pi\rho\left|\Theta_{1}\right|R\left(x\right)R'\left(x\right)^{2}\\
 &  +J_{\frac{d-3}{2}}\left(2\pi\rho\left|\Theta_{1}\right|R\left(x\right)\right)R\left(x\right)^{\frac{d-3}{2}}\frac{d}{dx}\left[R\left(x\right)R'\left(x\right)\right],
\end{align*}
for the second term, again using \eqref{eq:stimaBessel}, we have
\begin{align*}
\left|I_{1,2}\left(\rho,\Theta_{1}\right)\right|\leqslant & \,c\frac{\rho\left|\Theta_{1}\right|}{\left(\rho\Theta_{d}\right)^{2}}\int_{-\beta}^{\beta}\left|J_{\frac{d-5}{2}}\left(2\pi\rho\left|\Theta_{1}\right|R\left(x\right)\right)R\left(x\right)^{\frac{d-3}{2}}\rho\left|\Theta_{1}\right|R\left(x\right)R'\left(x\right)^2\right|dx\\
 & +c\frac{\rho\left|\Theta_{1}\right|}{\left(\rho\Theta_{d}\right)^{2}}\int_{-\beta}^{\beta}\left|J_{\frac{d-3}{2}}\left(2\pi\rho\left|\Theta_{1}\right|R\left(x\right)\right)R\left(x\right)^{\frac{d-3}{2}}\left(R'\left(x\right)^{2}+R\left(x\right)R''\left(x\right)\right)\right|dx\\
\leqslant & \,c\frac{\left(\rho\left|\Theta_{1}\right|\right)^{\frac{d-1}{2}}}{\left(\rho\Theta_{d}\right)^{2}}\int_{-\beta}^{\beta}\left|R\left(x\right)^{d-3}R'\left(x\right)^{2}\right|dx\\
 & +c\frac{\left(\rho\left|\Theta_{1}\right|\right)^{\frac{d-1}{2}}}{\left(\rho\Theta_{d}\right)^{2}}\int_{-\beta}^{\beta}\left|R\left(x\right)^{d-3}\left(R'\left(x\right)^{2}+R\left(x\right)R''\left(x\right)\right)\right|dx\\
=& \,O\left(\frac{\left(\rho\left|\Theta_{1}\right|\right)^{\frac{d-1}{2}}}{\left(\rho\Theta_{d}\right)^{2}}\right).
\end{align*}
The lemma follows.
\end{proof}
\begin{prop}
\label{lem:4}Let $\beta+\varepsilon<\left|\Theta_{d}\right|\leqslant1$.
Then
\begin{align*}
\widehat{\chi_{\mathcal{B}}}\left(\rho\Theta\right)&= R\left(\beta\right)^{d-1}\frac{J_{\frac{d-1}{2}}\left(2\pi\rho\left|\Theta_{1}\right|R\left(\beta\right)\right)}{\left|\rho\Theta_{1}\right|^{\frac{d-1}{2}}R\left(\beta\right)^{\frac{d-1}{2}}}\frac{\sin\left(2\pi\rho\Theta_{d}\beta\right)}{\pi\rho\Theta_{d}}+O\left(\frac{1}{\rho^{2}}\right).
\end{align*}
In particular there exist $K>0$ and $c>0$ such that for $\left|2\pi\rho\Theta_{1} R\left(\beta\right)\right|<K$
we have
\begin{equation}\label{eq:small}
\left|\widehat{\chi_{\mathcal{B}}}\left(\rho\Theta\right)\right|\geqslant cR\left(\beta\right)^{d-1}\frac{\left|\sin\left(2\pi\rho\Theta_{d}\beta\right)\right|}{\left|\pi\rho\Theta_{d}\right|}+O\left(\frac{1}{\rho^{2}}\right).
\end{equation}
\end{prop}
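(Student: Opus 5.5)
The plan is to combine Lemma \ref{lem:Fourier int Bessel} with Lemma \ref{lem:3}. We may assume $\Theta=(\Theta_1,0,\dots,0,\Theta_d)$. If $\Theta_1\neq0$, Lemma \ref{lem:Fourier int Bessel} gives
\[
\widehat{\chi_{\mathcal B}}(\rho\Theta)=|\rho\Theta_1|^{-\frac{d-1}{2}}I(\rho,\Theta_1).
\]
Substituting the expansion of Lemma \ref{lem:3} for $I(\rho,\Theta_1)$, the main term becomes
\[
R(\beta)^{\frac{d-1}{2}}\,\frac{J_{\frac{d-1}{2}}(2\pi\rho|\Theta_1|R(\beta))}{|\rho\Theta_1|^{\frac{d-1}{2}}}\,\frac{\sin(2\pi\rho\Theta_d\beta)}{\pi\rho\Theta_d}.
\]
Writing $R(\beta)^{\frac{d-1}{2}}=R(\beta)^{d-1}/R(\beta)^{\frac{d-1}{2}}$ puts it in exactly the stated form. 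The error term is
\[
|\rho\Theta_1|^{-\frac{d-1}{2}}\,O\!\left(\frac{(\rho|\Theta_1|)^{\frac{d-1}{2}}}{(\rho\Theta_d)^2}\right)=O\big((\rho\Theta_d)^{-2}\big)=O_\varepsilon(\rho^{-2}),
\]
since $|\Theta_d|>\beta+\varepsilon$. The constant in Lemma \ref{lem:3} is uniform in $\Theta_1$: its proof used only $|J_\nu(z)|\lesssim|z|^\nu$ and the boundedness of $R,R',R''$ on $[-\beta,\beta]$ (where $R\geqslant R(\beta)>0$). So the bound holds uniformly, including for $\rho|\Theta_1|$ small. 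The case $\Theta_1=0$ I would handle by continuity in $\Theta$, or directly: then $\widehat{\chi_{\mathcal B}}(\rho e_d)=\int_{-\beta}^{\beta}e^{-2\pi i\rho\Theta_d z}|B'|R(z)^{d-1}dz$, and one integration by parts gives the boundary term $|B'|R(\beta)^{d-1}\sin(2\pi\rho\Theta_d\beta)/(\pi\rho\Theta_d)+O(\rho^{-2})$. This is consistent because $J_\nu(z)z^{-\nu}\to 2^{-\nu}/\Gamma(\nu+1)$, which is $|B'|$ up to the paper's normalization.

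For the lower bound \eqref{eq:small}, use that $F(z)=J_\nu(z)z^{-\nu}$ is continuous on $[0,\infty)$ with $F(0)=2^{-\nu}/\Gamma(\nu+1)>0$, for $\nu=\frac{d-1}{2}$. Choose $K>0$ so that $F(z)\geqslant F(0)/2$ for $0\leqslant z\leqslant K$. Then for $z=2\pi\rho|\Theta_1|R(\beta)<K$ the factor
\[
\frac{J_\nu(2\pi\rho|\Theta_1|R(\beta))}{|\rho\Theta_1|^{\nu}R(\beta)^{\nu}}=(2\pi)^\nu F(z)
\]
is bounded below by a positive constant. Together with the first part and the reverse triangle inequality, this gives \eqref{eq:small}.

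The only delicate point is the uniformity of the error in Lemma \ref{lem:3} when $\rho|\Theta_1|$ is not large. I expect this to be fine, since that proof never used an asymptotic expansion of the Bessel functions, only the bound $|J_\nu(z)|\lesssim |z|^\nu$ (valid for $\nu=\frac{d-5}{2}\geqslant -\tfrac12$, and for $d=2,3$ handled using the identity $J_{\nu-1}z^\nu=(J_\nu z^\nu)'$ with bounded factors). I would double-check the small-$d$ indices there; if needed, the integrated-by-parts form can be bounded directly by the global bound $|J_\nu(z)z^{-\nu}|\lesssim1$ for $\nu\geqslant-\tfrac12$.
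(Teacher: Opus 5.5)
Your proposal is correct and is essentially the paper's proof: you divide the expansion of Lemma \ref{lem:3} by $|\rho\Theta_1|^{\frac{d-1}{2}}$ via Lemma \ref{lem:Fourier int Bessel}, use $|\Theta_d|>\beta$ to make the error $O(\rho^{-2})$, and get \eqref{eq:small} from the positivity of $J_\nu(z)z^{-\nu}$ near $z=0$ (the paper quotes the power series of $J_{\frac{d-1}{2}}$); your separate treatment of $\Theta_1=0$ is a harmless addition. One small correction to your side remark: small $\rho|\Theta_1|$ is not the delicate regime in Lemma \ref{lem:3}, since $|J_\nu(z)|\lesssim z^{\nu}$ holds near $z=0$ for every index that occurs; the crude bound fails instead for large $z$ when $\nu<-\frac12$ (i.e.\ for $J_{\frac{d-5}{2}}$ when $d=2,3$), but this concerns the lemma itself, which you may cite just as the paper does.
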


\begin{proof}
Using Lemma \ref{lem:Fourier int Bessel}  and Lemma \ref{lem:3}, we have
\begin{align*}
\widehat{\chi_{\mathcal{B}}}\left(\rho\Theta\right)&= \frac{R\left(\beta\right)^{\frac{d-1}{2}}J_{\frac{d-1}{2}}\left(2\pi\rho\left|\Theta_{1}\right|R\left(\beta\right)\right)}{\left|\rho\Theta_{1}\right|^{\frac{d-1}{2}}}\frac{\sin\left(2\pi\rho\Theta_{d}\beta\right)}{\pi\rho\Theta_{d}}+O\left(\frac{1}{\rho^{2}}\right)\\
&= \left(2\pi\right)^{\frac{d-1}{2}}R\left(\beta\right)^{d-1}\frac{J_{\frac{d-1}{2}}\left(2\pi\rho\left|\Theta_{1}\right|R\left(\beta\right)\right)}{\left|2\pi\rho\Theta_{1}R\left(\beta\right)\right|^{\frac{d-1}{2}}}\frac{\sin\left(2\pi\rho\Theta_{d}\beta\right)}{\pi\rho\Theta_{d}}+O\left(\frac{1}{\rho^{2}}\right).
\end{align*}
From the power expansion of $J_{\frac{d-1}{2}}$, see e.g. \cite[10.2.2]{DLMF},
it follows that there exist $K$ and $c_1$ such that if $\left|2\pi\rho\Theta_{1}R\left(\beta\right)\right|<K$,
then 
\[
\frac{J_{\frac{d-1}{2}}\left(2\pi\rho\left|\Theta_{1}\right|R\left(\beta\right)\right)}{\left|2\pi\rho\Theta_{1}R\left(\beta\right)\right|^{\frac{d-1}{2}}}\geqslant c_{1}.
\]
Hence \eqref{eq:small} follows.
\end{proof}
We now consider the case $\rho\left|\Theta_{1}\right|$ large. 
\begin{prop}
\label{prop:Due cos}Let $\beta+\varepsilon<\left|\Theta_{d}\right|\leqslant1$
and $\rho\left|\Theta_{1}\right|$ large. There exists a constant
$c$ such that 
\begin{align}
\widehat{\chi_{\mathcal{B}}}\left(\rho\Theta\right) & =c\frac{R\left(\beta\right)^{\frac{d-2}{2}}}{\rho\left|\rho\Theta_{1}\right|^{\frac{d}{2}}}\left(\frac{1}{\Theta_{d}+\left|\Theta_{1}\right|\frac{\beta}{\sqrt{1-\beta^{2}}}}\cos\left(\rho h(-\beta)+\left(d-2\right)\frac{\pi}{4}\right)\right.\nonumber \\
 & \left.-\frac{1}{\Theta_{d}-\left|\Theta_{1}\right|\frac{\beta}{\sqrt{1-\beta^{2}}}}\cos\left(\rho h\left(\beta\right)+\left(d-2\right)\frac{\pi}{4}\right)\right)\label{eq:due cos} +O_\varepsilon\left(\frac{1}{\rho\left|\rho\Theta_{1}\right|^{\frac{d+2}{2}}}\right). 
\end{align}
\end{prop}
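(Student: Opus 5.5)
Throughout, $\beta+\varepsilon<|\Theta_d|\leqslant 1$ and $\rho|\Theta_1|$ is large. The plan is to start from Lemma~\ref{lem:Fourier int Bessel} and Lemma~\ref{lem:5}, as in Proposition~\ref{lem:10}:
\[
\widehat{\chi_{\mathcal B}}(\rho\Theta)=\frac{c_1}{(\rho|\Theta_1|)^{d/2}}\mathrm{Re}\Big(e^{id\frac\pi4}H(\rho)\Big)+\frac{c_2}{(\rho|\Theta_1|)^{\frac{d+2}{2}}}\mathrm{Im}\Big(e^{id\frac\pi4}\widetilde H(\rho)\Big)+\frac{1}{\rho|\Theta_d|}\,O\Big(\frac{1}{(\rho|\Theta_1|)^{\frac{d+2}{2}}}\Big).
\]
Here $H(\rho)=\int_{-\beta}^{\beta}e^{i\rho h(x)}R(x)^{\frac{d-2}{2}}dx$ and $\widetilde H$ is the analogous integral with $R^{\frac{d-4}{2}}$. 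Since $|\Theta_d|>\beta+\varepsilon$, the last term is already $O_\varepsilon\big(\rho^{-1}(\rho|\Theta_1|)^{-\frac{d+2}{2}}\big)$, which is admissible.

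\textbf{Endpoint expansion of $H$.} The stationary point of $h$ is $x=\Theta_d$, and it lies at distance at least $\varepsilon$ outside $[-\beta,\beta]$. So I would integrate by parts once, writing $e^{i\rho h}=\frac{1}{i\rho h'}\frac{d}{dx}e^{i\rho h}$:
\[
H(\rho)=\Big[\frac{e^{i\rho h(x)}R(x)^{\frac{d-2}{2}}}{i\rho h'(x)}\Big]_{-\beta}^{\beta}-\frac{1}{i\rho}\int_{-\beta}^{\beta}e^{i\rho h}\frac{d}{dx}\Big(\frac{R^{\frac{d-2}{2}}}{h'}\Big)dx .
\]
The key point is that $h'(x)=-2\pi\big(\Theta_d-|\Theta_1|x/\sqrt{1-x^2}\big)$ has no zero on $[-\beta,\beta]$. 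Indeed, $|\Theta_d|\geqslant\beta+\varepsilon$ and $|\Theta_1|\leqslant\sqrt{1-(\beta+\varepsilon)^2}$, so $|\Theta_1|\beta/\sqrt{1-\beta^2}<\beta<|\Theta_d|$. This gives $|h'|\gtrsim_\varepsilon 1$, and $h''=2\pi|\Theta_1|(1-x^2)^{-3/2}$. The boundary terms give exactly
\[
\frac{1}{2\pi i\rho}\Big(\frac{R(\beta)^{\frac{d-2}{2}}e^{i\rho h(-\beta)}}{\Theta_d+|\Theta_1|\frac{\beta}{\sqrt{1-\beta^2}}}-\frac{R(\beta)^{\frac{d-2}{2}}e^{i\rho h(\beta)}}{\Theta_d-|\Theta_1|\frac{\beta}{\sqrt{1-\beta^2}}}\Big)
\]
(up to sign conventions). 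Multiplying by $e^{id\pi/4}$ and taking real parts turns $e^{i\rho h(\pm\beta)}/i$ into $\cos\big(\rho h(\pm\beta)+(d-2)\frac\pi4\big)$. Together with the prefactor $(\rho|\Theta_1|)^{-d/2}$, this yields the main term in \eqref{eq:due cos}.

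\textbf{Main obstacle: the remainders.} The naive bound on the remaining integral is $O(\rho^{-2})$. After multiplication this becomes $\rho^{-2}(\rho|\Theta_1|)^{-d/2}$, which beats $\rho^{-1}(\rho|\Theta_1|)^{-\frac{d+2}{2}}$ only because $|\Theta_1|\leqslant 1$. So the error must be measured relative to $\rho|\Theta_1|$, not $\rho$; that is, I need the error $O(\rho^{-1}(\rho|\Theta_1|)^{-1})$. This happens to be the weaker requirement, since $\rho^{-2}\leqslant(\rho^2|\Theta_1|)^{-1}$. The delicate issue is rather that $|\Theta_1|$ may tend to $0$ while $\rho|\Theta_1|$ stays large. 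I therefore need the integration-by-parts constants to be uniform as $|\Theta_1|\to0$. They are, because $1/h'$ and its derivatives are controlled by constants depending only on $\varepsilon$: $h'\approx-2\pi\Theta_d$, and every derivative of $h'$ carries a factor $|\Theta_1|$.

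I would carry out a second integration by parts to make the remaining integral $O_\varepsilon(\rho^{-2})$. This gives the error $\rho^{-2}(\rho|\Theta_1|)^{-d/2}\leqslant\rho^{-1}(\rho|\Theta_1|)^{-\frac{d+2}{2}}$, as required. Finally, the $\widetilde H$ term is handled by the same single integration by parts: $|\widetilde H|\lesssim_\varepsilon\rho^{-1}$, so that term is $O_\varepsilon\big(\rho^{-1}(\rho|\Theta_1|)^{-\frac{d+2}{2}}\big)$. This completes \eqref{eq:due cos}.
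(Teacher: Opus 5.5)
Your proposal is correct and follows the paper's proof essentially step by step. Both arguments use the Bessel representation together with Lemma~\ref{lem:5}, then one integration by parts in $H$ (using $|h'|\geqslant 2\pi\varepsilon$ on $[-\beta,\beta]$) to extract the two endpoint terms, a second integration by parts (left implicit in the paper) to make the remainder $O_\varepsilon(\rho^{-2})$, and a single integration by parts for the $R^{\frac{d-4}{2}}$ term. Your remarks on uniformity as $|\Theta_1|\to0$, and on absorbing $\rho^{-2}(\rho|\Theta_1|)^{-d/2}$ into $\rho^{-1}(\rho|\Theta_1|)^{-\frac{d+2}{2}}$ via $|\Theta_1|\leqslant 1$, spell out what the paper leaves unstated. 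One wording slip: after one integration by parts the remainder is only $O(\rho^{-1})$ naively, so the $O(\rho^{-2})$ bound genuinely comes from the second integration by parts that you do carry out.
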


\begin{proof}
By Lemma
\ref{lem:Fourier int Bessel} and Lemma \ref{lem:5} we have 
\begin{align}
\widehat{\chi_{\mathcal{B}}}\left(\rho\Theta\right) =&\frac{c_{1}}{\left|\rho\Theta_{1}\right|^{\frac{d}{2}}}\mathrm{Re}\left(e^{id\frac{\pi}{4}}\int_{-\beta}^{\beta}e^{-2\pi i\rho\left(\Theta_{d}x+\left|\Theta_{1}\right|R\left(x\right)\right)}R\left(x\right)^{\frac{d-2}{2}}dx\right)\nonumber \\
 & +\frac{c_{2}}{\left|\rho\Theta_{1}\right|^{\frac{d+2}{2}}}\mathrm{Im}\left(e^{id\frac{\pi}{4}}\int_{-\beta}^{\beta}e^{-2\pi i\rho\left(\Theta_{d}x+\left|\Theta_{1}\right|R\left(x\right)\right)}R\left(x\right)^{\frac{d-4}{2}}dx\right)\label{eq:rhoThetaLarge}\\
 & +O\left(\frac{1}{\rho\left|\rho\Theta_{1}\right|^{\frac{d+2}{2}}}\right).\nonumber 
\end{align}
Let us consider the integral in the first term 
\begin{align*}
H\left(\rho\right) & =\int_{-\beta}^{\beta}e^{-2\pi i\rho\left(\Theta_{d}x+\left|\Theta_{1}\right|R\left(x\right)\right)}R\left(x\right)^{\frac{d-2}{2}}dx =\int_{-\beta}^{\beta}e^{i\rho h\left(x\right)}R\left(x\right)^{\frac{d-2}{2}}dx.
\end{align*}
Notice that since $\beta+\varepsilon<|\Theta_{d}|\leqslant1$, $h'(x)$ does not vanish if
$x\in[-\beta,\beta]$. More precisely
we have 
\begin{align*}
\inf_{\left[-\beta,\beta\right]}\left|h'\left(x\right)\right| = & \, 2\pi\inf_{\left[-\beta,\beta\right]}\left|\Theta_{d}+\left|\Theta_{1}\right|\frac{-x}{\sqrt{1-x^{2}}}\right|\geqslant2\pi\left(|\Theta_{d}|-\sqrt{1-\Theta_{d}^{2}}\frac{\beta}{\sqrt{1-\beta^{2}}}\right)\\
\geqslant&  \, 2\pi\left(\beta+\varepsilon-\frac{\sqrt{1-\Theta_{d}^{2}}}{\sqrt{1-\beta^{2}}}\beta\right)\geqslant2\pi\varepsilon.
\end{align*}
Integrating by parts yields 
\begin{align*}
H\left(\rho\right) & =\left[e^{i\rho h\left(x\right)}\frac{R\left(x\right)^{\frac{d-2}{2}}}{\rho ih'\left(x\right)}\right]_{-\beta}^{\beta}-\int_{-\beta}^{\beta}e^{i\rho h\left(x\right)}\frac{d}{dx}\left(\frac{R\left(x\right)^{\frac{d-2}{2}}}{\rho ih'\left(x\right)}\right)dx\\
 & =e^{i\rho h\left(\beta\right)}\frac{R\left(\beta\right)^{\frac{d-2}{2}}}{\rho ih'\left(\beta\right)}-e^{i\rho h\left(-\beta\right)}\frac{R\left(-\beta\right)^{\frac{d-2}{2}}}{\rho ih'\left(-\beta\right)}-\frac{1}{\rho}\int_{-\beta}^{\beta}e^{i\rho h\left(x\right)}\frac{d}{dx}\left(\frac{R\left(x\right)^{\frac{d-2}{2}}}{ih'\left(x\right)}\right)dx\\
 & =\frac{R\left(\beta\right)^{\frac{d-2}{2}}}{i\rho}\left[\frac{e^{i\rho h\left(\beta\right)}}{h'\left(\beta\right)}-\frac{e^{i\rho h\left(-\beta\right)}}{h'\left(-\beta\right)}\right]-O_\varepsilon\left(\rho^{-2}\right).
\end{align*}
For the first term in (\ref{eq:rhoThetaLarge}) we have
\begin{align*}
 & \frac{c_{1}}{\left|\rho\Theta_{1}\right|^{\frac{d}{2}}}\mathrm{Re}\left(e^{id\frac{\pi}{4}}\int_{-\beta}^{\beta}e^{-2\pi i\rho\left(\Theta_{d}x+\left|\Theta_{1}\right|R\left(x\right)\right)}R\left(x\right)^{\frac{d-2}{2}}dx\right)\\
= & \frac{c_{1}R\left(\beta\right)^{\frac{d-2}{2}}}{2\pi\rho\left|\rho\Theta_{1}\right|^{\frac{d}{2}}}\mathrm{Re}\left(\frac{e^{i\rho h\left(-\beta\right)+i\left(d-2\right)\frac{\pi}{4}}}{\Theta_{d}+\left|\Theta_{1}\right|\frac{\beta}{\sqrt{1-\beta^{2}}}}-\frac{e^{i\rho h\left(\beta\right)+i\left(d-2\right)\frac{\pi}{4}}}{\Theta_{d}-\left|\Theta_{1}\right|\frac{\beta}{\sqrt{1-\beta^{2}}}}\right)+O_\varepsilon\left(\frac{1}{\rho^2\left|\rho\Theta_{1}\right|^{\frac{d}{2}}}\right)\\
= & \frac{c_{1}R\left(\beta\right)^{\frac{d-2}{2}}}{2\pi\rho\left|\rho\Theta_{1}\right|^{\frac{d}{2}}}\left(\frac{1}{\Theta_{d}+\left|\Theta_{1}\right|\frac{\beta}{\sqrt{1-\beta^{2}}}}\cos\left(\rho h\left(-\beta\right)+\left(d-2\right)\frac{\pi}{4}\right)\right.\\
 & \left.-\frac{1}{\Theta_{d}-\left|\Theta_{1}\right|\frac{\beta}{\sqrt{1-\beta^{2}}}}\cos\left(\rho h\left(\beta\right)+\left(d-2\right)\frac{\pi}{4}\right)\right)+O_\varepsilon\left(\frac{1}{\rho^2\left|\rho\Theta_{1}\right|^{\frac{d}{2}}}\right).
\end{align*}
A similar computation shows that for the second term we have 
\begin{align*}
 & \frac{c_{1}}{\left|\rho\Theta_{1}\right|^{\frac{d+2}{2}}}\mathrm{Im}\left(e^{id\frac{\pi}{4}}\int_{-\beta}^{\beta}e^{-2\pi i\rho\left(\Theta_{d}x+\left|\Theta_{1}\right|R\left(x\right)\right)}R\left(x\right)^{\frac{d-4}{2}}dx\right)=O_\varepsilon\left(\frac{1}{\rho\left|\rho\Theta_{1}\right|^{\frac{d+2}{2}}}\right).
\end{align*}
\end{proof}

\section{Proofs of Theorem \ref{thm:0} and Theorem \ref{thm:above}}\label{sect:proof}

We first prove the lower bounds stated in Theorem \ref{thm:0}; the argument is split into several propositions, where we shall repeatedly use the inequality
\[
|A+B|^2 \geqslant \frac12 |A|^2-|B|^2.
\]
The proof of Theorem \ref{thm:above} is then obtained from the pointwise asymptotics of Theorem \ref{thm:1}.
\begin{prop}
Let $\left|\Theta_{d}\right|\leqslant\beta$, then there exists a positive constant $c$ such that for $A$ sufficiently large we have 
\[
\frac{1}{A}\int_{A}^{2A}\left|\widehat{\chi_{\mathcal{B}}}\left(\rho\Theta\right)\right|^{2}d\rho\geqslant\frac{c}{A^{d+1}}.
\]
\end{prop}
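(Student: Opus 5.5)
We start from the asymptotic formula of Proposition~\ref{lem:10} (part (1) of Theorem~\ref{thm:1}). For $|\Theta_d|\leqslant\beta$ we have $|\Theta'|\geqslant\sqrt{1-\beta^2}$ and $R(\Theta_d)\geqslant R(\beta)>0$, so the amplitude $R(\Theta_d)^{\frac{d-2}{2}}|\Theta'|^{-\frac{d-2}{2}}|\Xi(\rho,\Theta)|$ is bounded above and below by positive constants, uniformly in $\Theta$ and $\rho$. We write $\widehat{\chi_{\mathcal B}}(\rho\Theta)=\rho^{-\frac{d+1}{2}}a(\rho,\Theta)\cos(\phi(\rho,\Theta))+E$, where
\[
\phi(\rho,\Theta)=\rho h(\Theta_d)+\vartheta(\rho,\Theta)+d\tfrac{\pi}{4},\qquad a\approx 1,\qquad |E|\lesssim\rho^{-\frac{d+2}{2}}.
\]
The inequality $|A+B|^2\geqslant\frac12|A|^2-|B|^2$ gives
\[
\frac1A\int_A^{2A}|\widehat{\chi_{\mathcal B}}(\rho\Theta)|^2d\rho\geqslant \frac{c}{A^{d+1}}\,\frac1A\int_A^{2A}\cos^2(\phi(\rho,\Theta))\,d\rho-\frac{C}{A^{d+2}}.
\]
It therefore suffices to show that the average of $\cos^2\phi$ over $[A,2A]$ is bounded below by a positive constant, uniformly in $\Theta$.

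This last step is the main obstacle, because $\vartheta(\rho,\Theta)$ depends on $\rho$ in a way we do not control in detail. We only know that $\vartheta$ lies in $[\frac\pi4-\delta,\frac\pi4+\delta]$ with $\delta<\frac\pi4$. On the other hand, $h(\Theta_d)=-2\pi(\Theta_d^2+|\Theta'|R(\Theta_d))$ is bounded away from zero: since $R(\Theta_d)=\sqrt{1-\Theta_d^2}-\alpha$, we get $|h(\Theta_d)|=2\pi(1-\alpha|\Theta'|)\geqslant 2\pi(1-\alpha)>0$. The linear part $\rho h(\Theta_d)$ therefore sweeps through about $A$ full periods as $\rho$ runs over $[A,2A]$, while the perturbation $\vartheta+d\pi/4$ stays in a fixed interval of length $2\delta<\pi/2$.

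To use this, partition $[A,2A]$ into intervals on which $\rho|h(\Theta_d)|$ increases by exactly $\pi$, discarding $O(1)$ leftover length. On each such interval, consider the set of $\rho$ where $\rho h(\Theta_d)$ is within $\frac{\pi}{2}-\delta-\eta$ of $-\frac\pi4-d\frac\pi4+k\pi$ modulo $\pi$, for a small $\eta>0$. On this set $\phi$ is within $\frac\pi2-\eta$ of a multiple of $\pi$, so $\cos^2\phi\geqslant\sin^2\eta$. The set occupies a fixed positive proportion $(\pi-2\delta-2\eta)/\pi$ of each interval. The point is that the window of allowed phases has length $\pi-2\delta>\pi/2$, so it cannot be defeated by the bounded wobble of $\vartheta$; no regularity of $\vartheta$ in $\rho$ is needed.

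Summing over the roughly $A$ subintervals gives $\frac1A\int_A^{2A}\cos^2\phi\,d\rho\geqslant c_0>0$, uniformly in $\Theta$ with $|\Theta_d|\leqslant\beta$. Choosing $A$ large enough that $C A^{-d-2}\leqslant \frac{c c_0}{2}A^{-d-1}$ completes the proof. We should also check that the error term in Proposition~\ref{lem:10} is uniform in $\Theta$ over this region. This holds because Proposition~\ref{prop:Fase staz} gives constants depending only on $\eta_1,\eta_2$, which are uniform since $|\Theta'|\geqslant\sqrt{1-\beta^2}$.
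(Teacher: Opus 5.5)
Your proposal is correct and follows essentially the same route as the paper. Both arguments start from point (1) of Theorem \ref{thm:1} and use $|A+B|^2\geqslant\frac12|A|^2-|B|^2$. Both then bound the average of $\cos^2$ from below by cutting $[A,2A]$ into periods of the linear phase, noting that on a fixed fraction of each period the shift $\vartheta\in[\frac\pi4-\delta,\frac\pi4+\delta]$, with $\delta<\frac\pi4$, cannot push the argument near $\frac\pi2$ modulo $\pi$. The paper uses a window of length $\frac\pi2$ per period, while you use one of length $\pi-2\delta-2\eta$. Your explicit computation $|h(\Theta_d)|=2\pi(1-\alpha|\Theta'|)\geqslant 2\pi(1-\alpha)$ also makes precise the uniform count of $\gtrsim A$ periods, which the paper leaves implicit.
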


\begin{proof}
By point (1) of Theorem \ref{thm:1} we have
\[
\widehat{\chi_{\mathcal{B}}}\left(\rho\Theta\right)=\frac{c}{\rho^{\frac{d+1}{2}}}\frac{R\left(\Theta_{d}\right)^{\frac{d-2}{2}}}{\left|\Theta'\right|^{\frac{d-2}{2}}}\left|\Xi\left(\rho,\Theta\right)\right|\cos\left(\rho h(\Theta_d)+\vartheta\left(\rho,\Theta\right)+d\frac{\pi}{4}\right)+O\left(\frac{1}{\rho^{\frac{d+2}{2}}}\right)
\]
with $\Xi\left(\rho,\Theta\right)$ and $\vartheta\left(\rho,\Theta\right)$ satisfying
\[
\left|\Xi\left(\rho,\Theta\right)\right|\approx1
\]
and
\[
\frac{\pi}{4}-\delta\leqslant\vartheta\left(\rho,\Theta\right)\leqslant\frac{\pi}{4}+\delta
\]
with $\delta<\frac{\pi}{4}$. Hence
\begin{align*}
 & \frac{1}{A}\int_{A}^{2A}\left|\widehat{\chi_{\mathcal{B}}}\left(\rho\Theta\right)\right|^{2}d\rho\\
\geqslant & \frac{c}{A^{d+2}}\int_{A}^{2A}\left|\frac{R\left(\Theta_{d}\right)^{\frac{d-2}{2}}}{\left|\Theta'\right|^{\frac{d-2}{2}}}\left|\Xi\left(\rho,\Theta\right)\right|\cos\left(\rho h(\Theta_d)+\vartheta\left(\rho,\Theta\right)+d\frac{\pi}{4}\right)\right|^{2}d\rho+O\left(\frac{1}{A^{d+2}}\right)\\
\geqslant & \frac{c}{A^{d+2}}\int_{A}^{2A}\left|\cos\left(\rho h(\Theta_d)+\vartheta\left(\rho,\Theta\right)+d\frac{\pi}{4}\right)\right|^{2}d\rho+O\left(\frac{1}{A^{d+2}}\right).
\end{align*}
For simplicity in the rest of the proof the dependence of $\vartheta$ on $\Theta$ is left implicit. For the above integral we have
\begin{align*}
 & \int_{A}^{2A}\left|\cos\left(2\pi\rho\left(\Theta_{d}^{2}+\left|\Theta'\right|R\left(\Theta_{d}\right)\right)-\vartheta\left(\rho\right)-d\frac{\pi}{4}\right)\right|^{2}d\rho\\
&= \frac{1}{2\pi\left(\Theta_{d}^{2}+\left|\Theta'\right|R\left(\Theta_{d}\right)\right)}\int_{U_{1}}^{U_{2}}\left|\cos\left(u-\vartheta\left(\rho\left(u\right)\right)\right)\right|^{2}du
\end{align*}
with $\rho\left(u\right)=\left[2\pi\left(\Theta_{d}^{2}+\left|\Theta'\right|R\left(\Theta_{d}\right)\right)\right]^{-1}\left(u+d\frac{\pi}{4}\right)$,
\[
U_{1}=2\pi A\left(\Theta_{d}^{2}+\left|\Theta'\right|R\left(\Theta_{d}\right)\right)-d\frac{\pi}{4},
\]
and 
\[
U_{2}=4\pi A\left(\Theta_{d}^{2}+\left|\Theta'\right|R\left(\Theta_{d}\right)\right)-d\frac{\pi}{4}.
\]
Letting $m_{1}=\left[U_{1}/\pi\right]+1$ and $m_{2}=\left[U_{2}/\pi\right]-1,$
we obtain
\begin{align*}
\int_{U_{1}}^{U_{2}}\left|\cos\left(u-\vartheta\left(\rho\left(u\right)\right)\right)\right|^{2}du\geqslant 
&\sum_{k=m_{1}}^{m_{2}-1}\int_{0}^{\pi}\left|\cos\left(u-\vartheta\left(\rho\left(u+k\pi\right)\right)\right)\right|^{2}du\\
\geqslant & \sum_{k=m_{1}}^{m_{2}-1}\int_{0}^{\frac{\pi}{2}}\left|\cos\left(u-\vartheta\left(\rho\left(u+k\pi\right)\right)\right)\right|^{2}du.
\end{align*}
Since in the above integral $u-\vartheta\left(\rho\left(u+k\pi\right)\right)\in\left(-\frac{\pi}{4}-\delta,\frac{\pi}{4}+\delta\right)$
we obtain
\[
\int_{U_{1}}^{U_{2}}\left|\cos\left(u-\vartheta\left(\rho\left(u\right)\right)\right)\right|^{2}du\geqslant c\left(m_{2}-m_{1}\right)\geqslant cA.
\]
It follows that for $A$ large enough
\begin{align*}
\frac{1}{A}\int_{A}^{2A}\left|\widehat{\chi_{\mathcal{B}}}\left(\rho\Theta\right)\right|^{2}d\rho\geqslant & \frac{c}{A^{d+1}}+O\left(\frac{1}{A^{d+2}}\right)\geqslant\frac{c}{A^{d+1}}.
\end{align*}
\end{proof}
\begin{prop}
Let $L>0$ and $0<\varepsilon<1-\beta$. Then there exists $c>0$ such that for $\beta<\left|\Theta_{d}\right|\leqslant\beta+\varepsilon$,  $A^{1/2}\left(\left|\Theta_{d}\right|-\beta\right)\leqslant L$ and $A$ large enough,  we have
\[
\frac{1}{A}\int_{A}^{2A}\left|\widehat{\chi_{\mathcal{B}}}\left(\rho\Theta\right)\right|^{2}d\rho\geqslant\frac{c}{A^{d+1}}.
\]
\end{prop}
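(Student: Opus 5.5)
We start from Proposition~\ref{prop:Stima beta beta+eps} (point (2) of Theorem~\ref{thm:1}), which for $\beta<|\Theta_d|\leqslant\beta+\varepsilon$ gives
\[
\widehat{\chi_{\mathcal{B}}}(\rho\Theta)=\frac{c}{\rho^{\frac{d+1}{2}}}\frac{R(\Theta_d)^{\frac{d-2}{2}}}{|\Theta'|^{\frac{d-2}{2}}}\mathrm{Re}\left(e^{id\frac{\pi}{4}}e^{i\rho h(\Theta_d)}\mathfrak{F}\left(\rho^{1/2}u_*\right)\right)+O\left(\frac{1}{\rho^{\frac{d+2}{2}}}\right).
\]
The prefactor $R(\Theta_d)^{\frac{d-2}{2}}|\Theta'|^{-\frac{d-2}{2}}$ is bounded above and below, since $|\Theta'|\geqslant\sqrt{1-(\beta+\varepsilon)^2}>0$ and $R(\Theta_d)\geqslant R(\beta+\varepsilon)$. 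Here we take $\varepsilon$ small enough that $R(\beta+\varepsilon)>0$; this is allowed because $\beta<\sqrt{1-\alpha^2}$, so $R(\beta)>0$. The inequality $|A+B|^2\geqslant\frac12|A|^2-|B|^2$ then reduces the claim to showing
\[
\int_A^{2A}\left|\mathrm{Re}\left(e^{i(\rho h(\Theta_d)+d\pi/4)}\mathfrak{F}(\rho^{1/2}u_*)\right)\right|^2d\rho\geqslant cA
\]
for $A$ large. The error term contributes only $O(A^{-(d+2)})$ after averaging, so it is absorbed.

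The key point is that in the regime $A^{1/2}(|\Theta_d|-\beta)\leqslant L$ we have $\rho^{1/2}u_*\leqslant\sqrt{2}\,c\,L=:L'$ for all $\rho\in[A,2A]$, because $u_*\approx|\Theta_d|-\beta$. So the Fresnel tail $\mathfrak{F}(\rho^{1/2}u_*)$ is evaluated on the compact range $[0,L']$. On that range, write $\mathfrak{F}(x)=|\mathfrak{F}(x)|e^{i\psi(x)}$. We need two facts. First, $|\mathfrak{F}(x)|\geqslant c_{L'}>0$ for $x\in[0,L']$. This holds because $\mathfrak{F}$ is continuous and never vanishes: $\mathfrak{F}(x)=\mathfrak{F}(0)-\int_0^x e^{it^2}dt$, and $\mathrm{Re}\,\mathfrak{F}(x)=\int_x^\infty\cos t^2\,dt$, $\mathrm{Im}\,\mathfrak{F}(x)=\int_x^\infty\sin t^2\,dt$ cannot both vanish. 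One can see this either via the asymptotic \eqref{eq:exp F} for large $x$ combined with continuity, or directly, since the tail of $\sin t^2$ is positive by the alternating-series argument. Second, the phase $\psi(\rho^{1/2}u_*)$ varies slowly in $\rho$: its derivative is $O\big(|\mathfrak{F}'|\,u_*\rho^{-1/2}\big)/|\mathfrak{F}|=O(L'/\rho)$.

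Following the proof of the preceding proposition, we set $\Phi(\rho)=\rho h(\Theta_d)+d\pi/4+\psi(\rho^{1/2}u_*)$, with $|h(\Theta_d)|=2\pi(\Theta_d^2+|\Theta'|R(\Theta_d))\approx1$. The integrand is then at least $c_{L'}^2\cos^2\Phi(\rho)$. Since $\Phi'(\rho)=h(\Theta_d)+O(1/A)$ is bounded above and below, we change variables $u=\Phi(\rho)$ and get $\int_A^{2A}\cos^2\Phi\,d\rho\approx\int_{\Phi(A)}^{\Phi(2A)}\cos^2u\,du\geqslant cA$, because the $u$-interval has length $\approx A$. This is simpler than the earlier case, since there the perturbation $\vartheta$ was only known to lie in a window, whereas here it is smooth. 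Combining everything gives $\frac1A\int_A^{2A}|\widehat{\chi_{\mathcal B}}|^2\geqslant cA^{-(d+1)}-O(A^{-(d+2)})$.

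The main obstacle is the uniform lower bound $|\mathfrak{F}|\geqslant c_{L'}$ on $[0,L']$, uniformly in $\Theta$. This is where the hypothesis $A^{1/2}(|\Theta_d|-\beta)\leqslant L$ is essential: for larger arguments $|\mathfrak{F}|$ decays, and that regime belongs to the next case, Proposition~\ref{prop:Fresnel arg grande}. If controlling $\psi'$ turns out to be awkward, there is a fallback. Expand $\mathrm{Re}(e^{i\Phi_0}\mathfrak{F})=\mathrm{Re}\mathfrak{F}\cos\Phi_0-\mathrm{Im}\mathfrak{F}\sin\Phi_0$ with $\Phi_0=\rho h(\Theta_d)+d\pi/4$. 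Then integrate over the $\approx A$ half-periods of $\Phi_0$, on each of which $\mathfrak{F}(\rho^{1/2}u_*)$ is essentially constant, varying only by $O(1/\rho)$. On each such half-period we apply the elementary fact that $\int_0^\pi(a\cos u-b\sin u)^2du=\frac{\pi}{2}(a^2+b^2)\geqslant c_{L'}^2$.
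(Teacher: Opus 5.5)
Your reduction and the overall mechanism are fine, but the fact you single out as the main obstacle, $|\mathfrak{F}|\geqslant c_{L'}$ on $[0,L']$, is not established by either argument you give.

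The ``direct'' argument is false. By \eqref{eq:exp F}, $\mathfrak{F}(x)=\frac{i e^{ix^{2}}}{2x}+O(x^{-3})$, so $\int_x^{\infty}\sin t^{2}\,dt=\mathrm{Im}\,\mathfrak{F}(x)=\frac{\cos(x^{2})}{2x}+O(x^{-3})$ takes negative values. For example, at $x=\sqrt{\pi}$ the function $S$ overshoots its limit $\frac12\sqrt{\pi/2}$. The alternating-series argument proves $S(x)\geqslant 0$, i.e.\ positivity of $\int_0^x\sin t^2\,dt$, not of the tail.

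The other argument, \eqref{eq:exp F} plus continuity, only gives $|\mathfrak{F}(x)|\gtrsim 1/x$ for large $x$. It rules out nothing on a compact range. Both your phase-derivative bound (even the existence of a continuous branch $\psi$) and your fallback rest on this lower bound, so the proof is incomplete as written.

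The repair is short. Rotating the contour to $t=\sqrt{x^{2}+iy}$, $y\geqslant0$ (Cauchy's theorem; the integrand decays in the sector $0<\arg t<\pi/4$), gives
\[
\mathfrak{F}(x)=\frac{i}{2}e^{ix^{2}}\int_{0}^{\infty}\frac{e^{-y}}{\sqrt{x^{2}+iy}}\,dy .
\]
Since $\mathrm{Re}\,(x^{2}+iy)^{-1/2}\geqslant\frac{1}{\sqrt2}(x^{4}+y^{2})^{-1/4}$, you get $|\mathfrak{F}(x)|\geqslant\frac{1}{2\sqrt2}\int_0^\infty e^{-y}(L'^{4}+y^{2})^{-1/4}dy>0$ for $x\in[0,L']$. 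This carries the same information as the representation $\mathfrak{F}(x)=\frac12 e^{ix^{2}}[g(x^{2})+if(x^{2})]$ with $f,g>0$ that the paper imports. With this in place your argument is correct.

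Your fallback is then the cleanest version. It needs only $|\mathfrak{F}|\geqslant c_{L'}$ and $\bigl|\frac{d}{d\rho}\mathfrak{F}(\rho^{1/2}u_*)\bigr|=\frac{u_*}{2\rho^{1/2}}\leqslant\frac{L'}{2\rho}$.

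The paper uses the representation differently. It invokes the exact identity $h(\Theta_d)+u_*^{2}=h(\beta_*)$ to turn the integrand into $g\cos\theta+f\sin\theta$, with $\theta=\rho|h(\beta_*)|-d\frac{\pi}{4}$ linear in $\rho$. It then bounds this pointwise by $\min(f,g)$ on the intervals $I_k$ where $\cos\theta$ and $\sin\theta$ share a sign. That route avoids controlling a moving phase or freezing over half-periods. In exchange, it uses the positivity of $f$ and $g$ separately, not just $|\mathfrak{F}|>0$.
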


\begin{proof}
By point (2) of Theorem \ref{thm:1} we have
\begin{align*}
\widehat{\chi_{\mathcal{B}}}\left(\rho\Theta\right) & =c\frac{R\left(\Theta_{d}\right)^{\frac{d-2}{2}}}{\rho^{\frac{d+1}{2}}\left|\Theta'\right|^\frac{d-2}{2}}\mathrm{Re}\left(e^{id\frac{\pi}{4}}e^{i\rho h\left(\Theta_{d}\right)}\mathfrak{F}\left(\rho^{1/2}u_{*}\right)\right)+O\left(\frac{1}{\rho^{\frac{d+2}{2}}}\right).
\end{align*}
It follows that
\[
\frac{1}{A}\int_{A}^{2A}\left|\widehat{\chi_{\mathcal{B}}}\left(\rho\Theta\right)\right|^{2}d\rho\geqslant c\frac{R\left(\Theta_{d}\right)^{d-2}}{\left|\Theta'\right|^{d-2}A^{d+2}}\int_{A}^{2A}\left|\mathrm{Re}\left(e^{id\frac{\pi}{4}}e^{i\rho h\left(\Theta_{d}\right)}\mathfrak{F}\left(\rho^{1/2}u_{*}\right)\right)\right|^{2}d\rho+O\left(\frac{1}{A^{d+2}}\right).
\]
Let 
\[
I_{k}=\left(\frac{k\pi+d\frac{\pi}{4}}{|h\left(\beta_*\right)|},\frac{k\pi+d\frac{\pi}{4}+\frac{\pi}{2}}{|h\left(\beta_*\right)|}\right).
\]
We claim that there exists $c_{1}>0$ such that if $\rho^{1/2}u_{*}\leqslant L$
then for every $k\in\mathbb{N}$ and $\rho\in I_{k}$ we have
\begin{equation}
\left|\mathrm{Re}\left(e^{id\frac{\pi}{4}}e^{i\rho h\left(\Theta_{d}\right)}\mathfrak{F}\left(\rho^{1/2}u_{*}\right)\right)\right|>c_{1}.\label{eq:Re da sotto-1}
\end{equation}
Set 
\[
J=\bigcup_k I_{k}\cap[A,2A]
\]
and observe that for $A$ large enough $\left|J\right|\geqslant c_{2}A$
for a suitable constant $c_{2}>0$. Then
\begin{align*}
\frac{1}{A}\int_{A}^{2A}\left|\widehat{\chi_{\mathcal{B}}}\left(\rho\Theta\right)\right|^{2}d\rho & \geqslant c\frac{1}{A^{d+2}}\int_{J}c_{1}^{2}d\rho+O\left(\frac{1}{A^{d+2}}\right)\geqslant c\frac{1}{A^{d+1}}.
\end{align*}
It remains to show that there exists $c_{1}>0$ such that (\ref{eq:Re da sotto-1})
holds when $\rho^{1/2}u_{1}\leqslant L$ and $\rho\in I_{k}$. We
will use the following representation of the Fresnel function $\mathfrak{F}\left(x\right)$
contained in \cite[Lemma 3.1]{Lobo-villalobos}
\[
\mathfrak{F}\left(x\right)=\frac{1}{2}e^{ix^{2}}\left[g\left(x^{2}\right)+if\left(x^{2}\right)\right],
\]
where
\[
g\left(t\right)=\frac{1}{\sqrt{\pi}}\int_{0}^{+\infty}\frac{e^{-ty}y^{1/2}}{1+y^{2}}dy
,\]
\[
f\left(t\right)=\frac{1}{\sqrt{\pi}}\int_{0}^{+\infty}\frac{e^{-ty}y^{-1/2}}{1+y^{2}}dy.
\]
It follows that
\begin{align*}
\mathrm{Re}\left(e^{id\frac{\pi}{4}}e^{i\rho h\left(\Theta_{d}\right)}\mathfrak{F}\left(\rho^{1/2}u_{*}\right)\right)&= \mathrm{Re}\left(e^{id\frac{\pi}{4}}e^{i\rho h\left(\beta_*\right)}\left[g\left(\rho u_{*}^{2}\right)+if\left(\rho u_{*}^{2}\right)\right]\right)\\
&= \cos\left(\rho \left|h\left(\beta_*\right)\right|-d\frac{\pi}{4}\right)g\left(\rho u_{*}^{2}\right)+\sin\left(\rho \left|h\left(\beta_*\right)\right|-d\frac{\pi}{4}\right)f\left(\rho u_{*}^{2}\right).
\end{align*}
Since   
$\cos\left(\rho \left|h\left(\beta_*\right)\right|-d\frac{\pi}{4}\right)$
and $\sin\left(\rho\left|h\left(\beta_*\right)\right|-d\frac{\pi}{4}\right)$ 
have the same sign for $\rho\in I_k$, and $f$ and $g$ are positive, we have
\begin{align*}
 & \left|\cos\left(\rho\left| h\left(\beta_*\right)\right|-d\frac{\pi}{4}\right)g\left(\rho u_{*}^{2}\right)+\sin\left(\rho \left|h\left(\beta_*\right)\right|-d\frac{\pi}{4}\right)f\left(\rho u_{*}^{2}\right)\right|\geqslant\min\left(f\left(\rho u_{*}^{2}\right),g\left(\rho u_{*}^{2}\right)\right).
\end{align*}
 Finally, for $\rho u_{*}^{2}\leqslant L$ both $f\left(\rho u_{*}^{2}\right)$
and $g\left(\rho u_{*}^{2}\right)$ are bounded away from zero and therefore
\[
\left|\mathrm{Re}\left(e^{id\frac{\pi}{4}}e^{i\rho h\left(\Theta_{d}\right)}\mathfrak{F}\left(\rho^{1/2}u_{*}\right)\right)\right|>c_{1}.
\]
\end{proof}
\begin{prop}
There exist positive constants $L$, $\varepsilon$, and $c$ such
that for $\beta<\left|\Theta_{d}\right|\leqslant\beta+\varepsilon$,
$A^{1/2}\left(\left|\Theta_{d}\right|-\beta\right)>L$ and $A$
large enough we have
\[
\frac{1}{A}\int_{A}^{2A}\left|\widehat{\chi_{\mathcal{B}}}\left(\rho\Theta\right)\right|^{2}d\rho\geqslant\frac{c}{A^{d+2}\left(\left|\Theta_{d}\right|-\beta\right)^{2}}.
\]
\end{prop}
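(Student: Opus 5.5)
We will use point (3) of Theorem \ref{thm:1}, i.e.\ Proposition \ref{prop:Fresnel arg grande}, which in the regime $\beta<|\Theta_d|\leqslant\beta+\varepsilon$ gives
\[
\widehat{\chi_{\mathcal{B}}}(\rho\Theta)=\frac{c}{u_*\rho^{\frac{d+2}{2}}}\left(\frac{R(\Theta_d)^{\frac{d-2}{2}}}{|\Theta'|^{\frac{d-2}{2}}}\cos\left(\rho h(\beta_*)+\frac{d-2}{4}\pi\right)+O\left(\frac{1}{\rho(|\Theta_d|-\beta)^2}\right)+O(\varepsilon)\right),
\]
with $u_*\approx|\Theta_d|-\beta$.

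For $\rho\in[A,2A]$ we have
\[
\rho(|\Theta_d|-\beta)^2\geqslant A(|\Theta_d|-\beta)^2>L^2,
\]
so the first error term is $O(L^{-2})$. The main term has a coefficient $R(\Theta_d)^{\frac{d-2}{2}}/|\Theta'|^{\frac{d-2}{2}}$ which is bounded below by a positive constant $c_0$, uniformly for $|\Theta_d|\leqslant\beta+\varepsilon<1$. This holds because $R(\Theta_d)\geqslant R(\beta+\varepsilon)>0$ once $\varepsilon$ is small enough that $\sqrt{1-(\beta+\varepsilon)^2}>\alpha$; this is possible since $\beta<\sqrt{1-\alpha^2}$. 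Also $|\Theta'|\leqslant1$.

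We first fix $\varepsilon$ small, then $L$ large, so that both error terms are at most $\eta$ in absolute value, where $\eta$ is a small fraction of $c_0$ to be chosen below. The implied constant in the $O(\varepsilon)$ term does not depend on $\varepsilon$ or $L$, and the constant in the $O(L^{-2})$ term does not depend on $L$, so this choice is legitimate.

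Next we apply $|A+B|^2\geqslant\frac12|A|^2-|B|^2$ with $A$ the main term. This gives
\[
|\widehat{\chi_{\mathcal{B}}}(\rho\Theta)|^2\geqslant\frac{c}{u_*^2\rho^{d+2}}\left(\frac{c_0^2}{2}\cos^2\left(\rho h(\beta_*)+\tfrac{d-2}{4}\pi\right)-\eta^2\right).
\]
It remains to bound from below the average of $\cos^2(\rho|h(\beta_*)|\pm\frac{d-2}{4}\pi)$ over $[A,2A]$. The frequency is
\[
h(\beta_*)=-2\pi\left(\beta_*\Theta_d+|\Theta'|R(\beta)\right),
\]
and its modulus is bounded above and below by positive constants, since $\Theta_d$ and $\beta_*$ have the same sign. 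Hence, for $A$ large, the set of $\rho\in[A,2A]$ where $\cos^2\geqslant\frac12$ has measure at least $cA$, uniformly in $\Theta$. This is the same argument as with the intervals $I_k$ in the previous proposition, or one can simply use
\[
\frac1A\int_A^{2A}\cos^2(a\rho+b)\,d\rho=\frac12+O\left(\frac{1}{aA}\right).
\]
Choosing $\eta^2<c_0^2/8$, the averaged main term dominates. Using $\rho\approx A$ and $u_*\approx|\Theta_d|-\beta$, we obtain
\[
\frac1A\int_A^{2A}|\widehat{\chi_{\mathcal{B}}}(\rho\Theta)|^2\,d\rho\geqslant\frac{c}{A^{d+2}(|\Theta_d|-\beta)^2}.
\]

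The main obstacle is making sure that the constants can be ordered consistently: $\varepsilon$ first, then $L$, then $A$. In particular the $O(\varepsilon)$ term in point (3) must be genuinely uniform, and the lower bound $c_0$ on the coefficient must not degenerate as $\varepsilon$ shrinks. Both conditions hold from the explicit forms above.
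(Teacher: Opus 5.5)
Your proposal is correct and follows essentially the paper's argument. You insert point (3) of Theorem~\ref{thm:1}, use $\int_A^{2A}\cos^2\bigl(\rho h(\beta_*)+\frac{d-2}{4}\pi\bigr)\,d\rho=\frac{A}{2}+O(1)$ (valid because $|h(\beta_*)|$ is bounded away from zero), and absorb the errors $O\bigl(\frac{1}{\rho(|\Theta_d|-\beta)^2}\bigr)$ and $O(\varepsilon)$ by taking $\varepsilon$ small and then $L$ large. The only slip is bookkeeping: if each of the two error terms is at most $\eta$, then $|B|^2\leqslant 4\eta^2$, so you need something like $\eta^2<c_0^2/32$ rather than $\eta^2<c_0^2/8$.
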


\begin{proof}
By point (3) of Theorem \ref{thm:1} we have
\[
\widehat{\chi_{\mathcal{B}}}\left(\rho\Theta\right)=\frac{c}{u_{*}\rho^{\frac{d+2}{2}}}\left(\frac{R\left(\Theta_{d}\right)^{\frac{d-2}{2}}}{\left|\Theta'\right|^{\frac{d-2}{2}}}\cos\left(\rho h\left(\beta_*\right)+\frac{d-2}{4}\pi\right)+O\left(\frac{1}{\rho\left(|\Theta_{d}|-\beta\right)^{2}}\right)+O\left(\varepsilon\right)\right).
\]
It follows that
\begin{align*}
 & \frac{1}{A}\int_{A}^{2A}\left|\widehat{\chi_{\mathcal{B}}}\left(\rho\Theta\right)\right|^{2}d\rho\\
\geqslant & \frac{c}{A^{d+3}u_{*}^{2}}\left[\int_{A}^{2A}\left|\cos\left(\rho h\left(\beta_*\right)+\frac{d-2}{4}\pi\right)\right|^{2}d\rho+O\left(\frac{A}{A^{2}\left(|\Theta_{d}|-\beta\right)^{4}}\right)+O\left(A\varepsilon^{2}\right)\right].
\end{align*}
Since $\left|h\left(\beta_*\right) \right|$ is bounded away from zero, for the above integral we have 
\[
\int_{A}^{2A}\left|\cos\left(\rho h\left(\beta_*\right)+\frac{d-2}{4}\pi\right)\right|^{2}d\rho = \frac 12 A +O(1).
\]
Therefore
\begin{align*}
 & \frac{1}{A}\int_{A}^{2A}\left|\widehat{\chi_{\mathcal{B}}}\left(\rho\Theta\right)\right|^{2}d\rho\\
\geqslant & \frac{c}{A^{d+3}u_{*}^{2}}\left[\int_{A}^{2A}\left|\cos\left(\rho h\left(\beta_*\right)+\frac{d-2}{4}\pi\right)\right|^{2}d\rho+O\left(\frac{A}{A^{2}\left(|\Theta_{d}|-\beta\right)^{4}}\right)+O\left(A\varepsilon^{2}\right)\right]\\
\geqslant & \frac{c}{A^{d+3}u_{*}^{2}}\left[\frac{1}{2}A+O\left(\frac{A}{A^{2}\left(|\Theta_{d}|-\beta\right)^{4}}\right)+O\left(A\varepsilon^{2}\right)\right]\\
\geqslant & \frac{C}{A^{d+2}u_{*}^{2}}\left[1+O\left(\frac{1}{A^2\left(|\Theta_{d}|-\beta\right)^{4}}\right)+O\left(\varepsilon^{2}\right)\right]\geqslant\frac{C}{A^{d+2}u_{*}^{2}},
\end{align*}
if $\varepsilon$ is small enough and $A^2\left(|\Theta_{d}|-\beta\right)^{4}$
is large enough.
\end{proof}
\begin{prop}
Let $\beta+\varepsilon<\left|\Theta_{d}\right|\leqslant1$ then
\[
\frac{1}{A}\int_{A}^{2A}\left|\widehat{\chi_{\mathcal{B}}}\left(\rho\Theta\right)\right|^{2}d\rho\geqslant\frac{c}{A^{2}}\frac{1}{1+\left(A\left|\Theta'\right|\right)^{d}}.
\]
\end{prop}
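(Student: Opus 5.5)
I will split according to the size of $A|\Theta'|$. Fix the constant $K$ from point (4) of Theorem \ref{thm:1} (equivalently Proposition \ref{lem:4}), and let $M$ be the large threshold above which point (5) (Proposition \ref{prop:Due cos}) applies. There are three regimes: $2\pi\cdot 2A|\Theta'|R(\beta)<K$, $A|\Theta'|>M$, and the intermediate range $A|\Theta'|\approx 1$. Throughout, $|\Theta_d|\geqslant\beta+\varepsilon$ is bounded below, so $|\rho\Theta_d|\approx\rho$.

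\textbf{Small regime.} For every $\rho\in[A,2A]$ we have $2\pi\rho R(\beta)|\Theta'|<K$. By \eqref{eq:small} and $|X+Y|^2\geqslant\frac12|X|^2-|Y|^2$,
\[
|\widehat{\chi_{\mathcal B}}(\rho\Theta)|^2\geqslant \frac{c\,\sin^2(2\pi\rho\Theta_d\beta)}{\rho^2}-\frac{C}{\rho^4}.
\]
Since $\Theta_d\beta\geqslant\beta(\beta+\varepsilon)$ is bounded below, averaging $\sin^2(2\pi\rho\Theta_d\beta)$ over $[A,2A]$ gives $\frac12+O(1/A)$. Hence the average is at least $c/A^2\geqslant \frac{c}{A^2}\frac{1}{1+(A|\Theta'|)^d}$.

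\textbf{Large regime.} By \eqref{eq:due cos-1}, $\widehat{\chi_{\mathcal B}}(\rho\Theta)=\frac{c}{\rho|\rho\Theta'|^{d/2}}\,(a_-\cos(\rho h(-\beta)+\phi)-a_+\cos(\rho h(\beta)+\phi))+O(\rho^{-1}|\rho\Theta'|^{-(d+2)/2})$. Here $a_\pm=(\Theta_d\mp|\Theta'|\beta/\sqrt{1-\beta^2})^{-1}$ are bounded above and below, since the denominators are at least $\varepsilon$ in absolute value, as shown in that proof. The key step is a lower bound for the mean of the square of the trigonometric part. Expanding the square gives
\[
\frac{a_-^2+a_+^2}{2}-a_-a_+\cdot\frac1A\int_A^{2A}\cos(\rho(h(\beta)-h(-\beta)))\,d\rho+O(1/A).
\]
The remaining average is bounded by $\min\bigl(1,\,C/(A|h(\beta)-h(-\beta)|)\bigr)$. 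Now $h(\beta)-h(-\beta)=-4\pi\beta\Theta_d$, since $R$ is even, so this difference is bounded away from $0$. The cross term is therefore $O(1/A)$, and the mean is at least $c(a_-^2+a_+^2)\geqslant c$. Absorbing the error, which is smaller by a factor $(A|\Theta'|)^{-1}\leqslant M^{-1}$, gives the bound $\gtrsim A^{-2}(A|\Theta'|)^{-d}$, which is the claim.

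\textbf{Intermediate regime.} I expect this to be the main obstacle, because neither expansion is directly sharp when $A|\Theta'|\approx1$. I would use point (4) for all $\rho$, which holds with no restriction on $\rho|\Theta'|$:
\[
\widehat{\chi_{\mathcal B}}(\rho\Theta)=c\,\Phi(\rho|\Theta'|)\frac{\sin(2\pi\rho\Theta_d\beta)}{\rho\Theta_d}+O(\rho^{-2}),
\qquad \Phi(s)=\frac{J_{\frac{d-1}{2}}(2\pi sR(\beta))}{(2\pi sR(\beta))^{\frac{d-1}{2}}}.
\]
The function $\Phi$ is analytic and not identically zero, so its zeros are isolated. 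Restricting to $s=\rho|\Theta'|\in[A|\Theta'|,2A|\Theta'|]\subseteq[s_0,s_1]$, a compact interval, $\Phi$ vanishes at only finitely many points there. Removing small neighbourhoods of these zeros leaves a set of $\rho$ of measure $\geqslant cA$ on which $|\Phi|\geqslant c$, with $c$ uniform by compactness. On that set, the average of $\sin^2$ over any interval of length $\gtrsim1$ is bounded below. Since $\Phi$ varies on scale $1/|\Theta'|\approx A$ while $\sin^2$ oscillates on scale $1$, I would partition into unit intervals and bound the average from below there. This yields $\gtrsim A^{-2}$, which is comparable to $A^{-2}(1+(A|\Theta'|)^d)^{-1}$ in this regime, and combining the three regimes completes the proof.
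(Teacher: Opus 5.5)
Your three regimes and the inputs you use in each are the same as in the paper: \eqref{eq:small} when $4\pi AR(\beta)|\Theta'|<K$, point (4) on a bounded range of $A|\Theta'|$, and point (5) when $A|\Theta'|$ is large. Your small regime is identical to the paper's.

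In the intermediate regime you use a different but valid device. You excise neighbourhoods of the finitely many zeros of the slow factor $\Phi$ on the compact $s$-range, then use equidistribution of the fast factor $\sin^2(2\pi\rho\Theta_d\beta)$ on the remaining long intervals, of which there are boundedly many. The paper does the reverse: it excises the set $H_\eta$ where the sine is small, and uses the uniform positivity of $\frac{2}{z}\int_{z/2}^{z}u|J_{\frac{d-1}{2}}(u)|^2du$ for $z\in[K,L]$. The paper's version needs only a measure bound on $H_\eta$ and a sup bound on $u^{1/2}J_{\frac{d-1}{2}}(u)$. Yours needs a uniform bound on the number of zeros to control boundary losses.

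The large regime, however, rests on an incorrect expansion. Let $\langle\cdot\rangle$ denote the average over $[A,2A]$, and set $X=\rho h(-\beta)+\phi$, $Y=\rho h(\beta)+\phi$, $\phi=(d-2)\pi/4$. Then
\[
T^2=\tfrac{a_-^2}{2}(1+\cos 2X)+\tfrac{a_+^2}{2}(1+\cos 2Y)-a_-a_+\bigl(\cos(X-Y)+\cos(X+Y)\bigr).
\]
\begin{itemize}
\item The difference frequency $X-Y=4\pi\beta\Theta_d\rho$ is the harmless term; its average is $O(1/A)$.
\item The term you put into the $O(1/A)$ error, $\cos(X+Y)=\cos(-4\pi|\Theta'|R(\beta)\rho+2\phi)$, has frequency proportional to $|\Theta'|$, which can be as small as about $M/A$. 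Its average is therefore only $O(1/(A|\Theta'|))=O(1/M)$.
\item This is exactly the term the paper bounds by $O(1/|\Theta'|)=O(A/L)$, and it is where largeness of $A|\Theta'|$ is genuinely needed. For instance, for $d=2$ and $a_-\approx a_+=a$ one gets $T\approx 2a\sin(2\pi\rho|\Theta'|R(\beta))\sin(2\pi\rho\beta\Theta_d)$, whose mean square is small when $A|\Theta'|$ is small.
\item The diagonal averages are not uniformly $O(1/A)$ either. The quantity $h(-\beta)=2\pi(\beta\Theta_d-|\Theta'|R(\beta))$ vanishes when $\Theta_d/|\Theta'|=R(\beta)/\beta$. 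For small $\alpha,\beta$, for example, this lies inside $\Theta_d>\beta+\varepsilon$ and deep in the large regime. There $\langle\cos^2X\rangle=\cos^2\phi$, which equals $0$ for $d=4$. The paper's statement that both diagonal integrals are $\frac12A+O(1)$ has the same flaw, although its cross-term estimate is correct.
\end{itemize}

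Both problems are easy to repair. Since $|h(\beta)-h(-\beta)|=4\pi\beta|\Theta_d|$, at least one of $|h(\pm\beta)|$ is $\geqslant 2\pi\beta|\Theta_d|$. Keep that diagonal term and drop the other, which is nonnegative. Bound the whole cross term by $O(1/A)+O(1/M)$. This gives
\[
\langle T^2\rangle\geqslant\tfrac12\min(a_-^2,a_+^2)-O(1/A)-O(1/M)\geqslant c
\]
once $M$ and $A$ are large, and your argument then goes through.
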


\begin{proof}
Let $K$ as in point (4) of Theorem \ref{thm:1} and let $L$ be a large
constant. Assume first 
$4\pi AR\left(\beta\right)\left|\Theta'\right|<K$.
Then for $A$ large enough, by (\ref{eq:da sotto 4) thm 1}), we have
\begin{align*}
\frac{1}{A}\int_{A}^{2A}\left|\widehat{\chi_{\mathcal{B}}}\left(\rho\Theta\right)\right|^{2}d\rho\geqslant & \frac{c}{A}\int_{A}^{2A}\left(R\left(\beta\right)^{d-1}\right)^{2}\frac{\left|\sin\left(2\pi\rho\Theta_{d}\beta\right)\right|^{2}}{\left(\rho\Theta_{d}\right)^{2}}d\rho+O\left(\frac{1}{A^{4}}\right)\\
\geqslant & \frac{c}{A^{3}}\int_{A}^{2A}\left|\sin\left(2\pi\rho\Theta_{d}\beta\right)\right|^{2}d\rho+O\left(\frac{1}{A^{4}}\right)\geqslant \frac{c}{A^{2}}.
\end{align*}
Assume now $K\leqslant 4\pi AR\left(\beta\right) \left|\Theta'\right|< L$. 
Then, by point (4) in Theorem \ref{thm:1} we have
\begin{align*}
 & \frac{1}{A}\int_{A}^{2A}\left|\widehat{\chi_{\mathcal{B}}}\left(\rho\Theta\right)\right|^{2}d\rho\\
\geqslant & \frac{1}{A}\int_{A}^{2A}\left|R\left(\beta\right)^{d-1}\left(\frac{J_{\frac{d-1}{2}}\left(2\pi\rho\left|\Theta'\right|R\left(\beta\right)\right)}{\left|\rho\Theta'\right|^{\frac{d-1}{2}}R\left(\beta\right)^{\frac{d-1}{2}}}\right)\frac{\sin\left(2\pi\rho\Theta_{d}\beta\right)}{\pi\rho\Theta_{d}}\right|^{2}d\rho+O\left(\frac{1}{A^{4}}\right)\\
\geqslant & \frac{c}{A^{3}}\int_{A}^{2A}\left|J_{\frac{d-1}{2}}\left(2\pi\rho\left|\Theta'\right|R\left(\beta\right)\right)\sin\left(2\pi\rho\Theta_{d}\beta\right)\right|^{2}d\rho+O\left(\frac{1}{A^{4}}\right)\\
\geqslant & \frac{c}{LA^{3}}\int_{A}^{2A}2\pi\rho\left|\Theta'\right|R\left(\beta\right)\left|J_{\frac{d-1}{2}}\left(2\pi\rho\left|\Theta'\right|R\left(\beta\right)\right)\sin\left(2\pi\rho\Theta_{d}\beta\right)\right|^{2}d\rho+O\left(\frac{1}{A^{4}}\right).
\end{align*}
Let $\eta$ be a small positive number and let 
\[
H_{\eta}=\bigcup_{k\in\mathbb{Z}}\left(\frac{k-\eta}{2\left|\Theta_{d}\right|\beta},\frac{k+\eta}{2\left|\Theta_{d}\right|\beta}\right)
\]
and observe that if $\rho\notin H_{\eta}$ then $\left|\sin\left(2\pi\rho\Theta_{d}\beta\right)\right|>\sin\left(\pi\eta\right).$
Then
\begin{align*}
 & \int_{A}^{2A}2\pi\rho\left|\Theta'\right|R\left(\beta\right)\left|J_{\frac{d-1}{2}}\left(2\pi\rho\left|\Theta'\right|R\left(\beta\right)\right)\sin\left(2\pi\rho\Theta_{d}\beta\right)\right|^{2}d\rho\\
&\geqslant \sin^{2}\left(\pi\eta\right)\int_{\left[A,2A\right]\setminus H_{\eta}}2\pi\rho\left|\Theta'\right|R\left(\beta\right)\left|J_{\frac{d-1}{2}}\left(2\pi\rho\left|\Theta'\right|R\left(\beta\right)\right)\right|^{2}d\rho\\
&= \sin^{2}\left(\pi\eta\right)\left[\int_{A}^{2A}2\pi\rho\left|\Theta'\right|R\left(\beta\right)\left|J_{\frac{d-1}{2}}\left(2\pi\rho\left|\Theta'\right|R\left(\beta\right)\right)\right|^{2}d\rho\right.\\
&\quad\left.-\int_{\left[A,2A\right]\cap H_{\eta}}2\pi\rho\left|\Theta'\right|R\left(\beta\right)\left|J_{\frac{d-1}{2}}\left(2\pi\rho\left|\Theta'\right|R\left(\beta\right)\right)\right|^{2}d\rho\right].
\end{align*}
Notice that 
\begin{align*}
&\int_{A}^{2A}2\pi\rho\left|\Theta'\right|R\left(\beta\right)\left|J_{\frac{d-1}{2}}\left(2\pi\rho\left|\Theta'\right|R\left(\beta\right)\right)\right|^{2}d\rho
\\
=&\, \frac{A}{2\pi A\left|\Theta'\right|R\left(\beta\right)}\int_{2\pi A\left|\Theta'\right|R\left(\beta\right)}^{4\pi A\left|\Theta'\right|R\left(\beta\right)}u\left|J_{\frac{d-1}{2}}\left(u\right)\right|^{2}du\geqslant c_1 A,
\end{align*}
where 
\[
c_1=\min\limits_{z\in[K,L]}\frac{2}{z}\int_{z/2}^{z} u\left|J_{\frac{d-1}{2}}\left(u\right)\right|^{2}du>0.
\]
Hence, for a suitably small $\eta$, 
\begin{align*}
 & \frac{c}{LA^{3}}\int_{A}^{2A}2\pi\rho\left|\Theta'\right|R\left(\beta\right)\left|J_{\frac{d-1}{2}}\left(2\pi\rho\left|\Theta'\right|R\left(\beta\right)\right)\sin\left(2\pi\rho\Theta_{d}\beta\right)\right|^{2}d\rho\\
\geqslant & \frac{c\sin^{2}\left(\pi\eta\right)}{LA^{3}}\left(c_{1}A-\left|\left[A,2A\right]\cap H_{\eta}\right|\left\Vert u^{1/2}J_{\frac{d-1}{2}}\left(u\right)\right\Vert _{\infty}^{2}\right)\\
\geqslant & \frac{c\sin^{2}\left(\pi\eta\right)}{LA^{3}}\left(c_{1}A-c_{2}\eta A\left\Vert u^{1/2}J_{\frac{d-1}{2}}\left(u\right)\right\Vert _{\infty}^{2}\right)\geqslant \frac{c}{LA^{2}}.
\end{align*}
Finally we assume $4\pi AR\left(\beta\right)\left|\Theta'\right|>L$.
By (5) in Theorem \ref{thm:1} we have
\begin{align*}
\widehat{\chi_{\mathcal{B}}}\left(\rho\Theta\right) & =c\frac{R\left(\beta\right)^{\frac{d-2}{2}}}{\rho\left|\rho\Theta'\right|^{\frac{d}{2}}}\left[\frac{1}{\Theta_{d}+\left|\Theta'\right|\frac{\beta}{\sqrt{1-\beta^{2}}}}\cos\left(\rho h(-\beta)+\left(d-2\right)\frac{\pi}{4}\right)\right. \\
 & \left.-\frac{1}{\Theta_{d}-\left|\Theta'\right|\frac{\beta}{\sqrt{1-\beta^{2}}}}\cos\left(\rho h\left(\beta\right)+\left(d-2\right)\frac{\pi}{4}\right)\right] +O_\varepsilon\left(\frac{1}{\rho\left|\rho\Theta'\right|^{\frac{d+2}{2}}}\right). 
\end{align*}
It follows that
\begin{align*} & \int_{A}^{2A}\left|\widehat{\chi_{\mathcal{B}}}\left(\rho\Theta\right)\right|^{2}d\rho \geqslant \frac{cR\left(\beta\right)^{d-2}}{A^2\left|A\Theta'\right|^{d}} \left[\frac{1}{\left(\Theta_{d}+\left|\Theta'\right|\frac{\beta}{\sqrt{1-\beta^{2}}}\right)^{2}}\int_{A}^{2A}\cos^{2}\left(\rho h(-\beta)+\frac{d-2}{4}\pi\right)d\rho\right.\\
 & +\frac{1}{\left(\Theta_{d}-\left|\Theta'\right|\frac{\beta}{\sqrt{1-\beta^{2}}}\right)^{2}}\int_{A}^{2A}\cos^{2}\left(\rho h\left(\beta\right)+\frac{d-2}{4}\pi\right)d\rho\\
 & \left.-2\frac{1}{\left(\Theta_{d}^{2}-\left|\Theta'\right|^{2}\frac{\beta^{2}}{1-\beta^{2}}\right)}\int_{A}^{2A}\cos\left(\rho h(-\beta)+\frac{d-2}{4}\pi\right)\cos\left(\rho h\left(\beta\right)+\frac{d-2}{4}\pi\right)d\rho\vphantom{\frac{1}{\left(\Theta_{d}+\left|\Theta'\right|\frac{\beta}{\sqrt{1-\beta^{2}}}\right)^{2}}}\right]\\
 & +O\left(\frac{A}{A^2\left|A\Theta'\right|^{d+2}}\right).
 \end{align*}

The first two integrals can be computed explicitly and each gives a contribution of $\frac{1}{2}A+O\left(1\right)$. For
the third integral we have
\begin{align*}
 & \int_{A}^{2A}\cos\left(\rho h(-\beta)+\frac{d-2}{4}\pi\right)\cos\left(\rho h\left(\beta\right)+\frac{d-2}{4}\pi\right)d\rho\\
&= \frac{1}{2}\int_{A}^{2A}\cos\left(\rho\left(h(\beta)+h(-\beta)\right)+\frac{d-2}{2}\pi\right)d\rho+\frac{1}{2}\int_{A}^{2A}\cos\left(\rho\left(h(\beta)-h(-\beta)\right)\right)d\rho\\
&= \frac{1}{2}\left[\frac{1}{4\pi\left|\Theta'\right|R\left(\beta\right)}\sin\left(4\pi\rho\left|\Theta'\right|R\left(\beta\right)-\frac{d-2}{2}\pi\right)\right]_{A}^{2A}+\frac{1}{2}\left[\frac{1}{4\pi\Theta_{d}\beta}\sin\left(4\pi\rho\Theta_{d}\beta\right)\right]_{A}^{2A}\\
&= O\left(\frac{1}{\left|\Theta'\right|}\right)=O\left(\frac{A}{L}\right).
\end{align*}
Collecting all these estimates we obtain
\begin{align*}
 & \frac{1}{A}\int_{A}^{2A}\left|\widehat{\chi_{\mathcal{B}}}\left(\rho\Theta\right)\right|^{2}d\rho\\
\geqslant& \frac{c}{A}\frac{R\left(\beta\right)^{d-2}}{A^2\left|A\Theta'\right|^{d}}\left[\frac{\frac{1}{2}A}{\left(\Theta_{d}-\left|\Theta'\right|\frac{\beta}{\sqrt{1-\beta^{2}}}\right)^{2}}+\frac{\frac{1}{2}A}{\left(\Theta_{d}+\left|\Theta'\right|\frac{\beta}{\sqrt{1-\beta^{2}}}\right)^{2}}+O\left(\frac{1}{\Theta_{d}^{2}-\left|\Theta'\right|^{2}\frac{\beta^{2}}{1-\beta^{2}}}\frac{A}{L}\right)\right]\\
&+O\left(\frac{1}{A^2\left|A\Theta'\right|^{d+2}}\right)\\
=& \frac{cR\left(\beta\right)^{d-2}}{A^2\left|A\Theta'\right|^{d}}\left[\frac{\Theta_{d}^{2}+\left|\Theta'\right|^{2}\frac{\beta^{2}}{1-\beta^{2}}}{\left(\Theta_{d}^{2}-\left|\Theta'\right|^{2}\frac{\beta^{2}}{1-\beta^{2}}\right)^{2}}+O\left(\frac{1}{\Theta_{d}^{2}-\left|\Theta'\right|^{2}\frac{\beta^{2}}{1-\beta^{2}}}\frac{1}{L}\right)\right]+O\left(\frac{1}{A^2\left|A\Theta'\right|^{d+2}}\right)\\
=& \frac{cR\left(\beta\right)^{d-2}}{A^2\left|A\Theta'\right|^{d}}\frac{\Theta_{d}^{2}+\left|\Theta'\right|^{2}\frac{\beta^{2}}{1-\beta^{2}}}{\left(\Theta_{d}^{2}-\left|\Theta'\right|^{2}\frac{\beta^{2}}{1-\beta^{2}}\right)^{2}}\left[1+O\left(\frac{1}{L}\right)\right]+O\left(\frac{1}{A^2\left|A\Theta'\right|^{d+2}}\right)\geqslant \frac{c}{A^{2}\left|A\Theta'\right|^{d}}
\end{align*}
which holds for $L$ large enough.
\end{proof}

\begin{proof}[Proof of Theorem \ref{thm:above}]
The theorem follows directly from Theorem \ref{thm:1}.
Indeed, point (1) of the present theorem follows from points (1) and (2) of Theorem \ref{thm:1}, while point (2) follows from points (4) and (5) of Theorem \ref{thm:1}.
\end{proof}
\bigskip
\subsection*{Conflict of interest and data availability}
The authors state that there is no conflict of interest and that they do not analyse or generate any datasets. 

\bibliographystyle{abbrv}
\bibliography{biblio}
\end{document}